\newtheorem{theorem}{Theorem}[section]
\newtheorem{lemma}[theorem]{Lemma}
\theoremstyle{definition}
\newtheorem{example}{Example}[section]
\theoremstyle{remark} 
\newtheorem{remark}{Remark}[section]
\providecommand{\apriori}{\emph{a priori} }
\providecommand{\aposteriori}{\emph{a posteriori} }
\newcommand{\R}{\mathbb{R}}
\newcommand{\N}{\mathbb{N}}
\providecommand{\FCal}{\mathcal{F}}
\providecommand{\hatF}{\widehat{\mathcal{F}}}
\providecommand{\abs}[1]{{\left| #1 \right|}}
\providecommand{\parenthesis}[1]{\left( #1 \right)}
\providecommand{\Prob}[1]{\mathrm{P}\mspace{-2mu}\left( #1 \right)}
\providecommand{\Exp}[1]{{\ensuremath{\mathrm{E}}\mspace{-2mu}\left[#1\right]}}
\providecommand{\Var}[1]{{\ensuremath{\mathrm{Var}\!\left( #1 \!\right)}}}
\providecommand{\bigO}[1]{{\mathcal{O}}\!\left(#1\right)}
\providecommand{\littleO}[1]{{ o}\!\left(#1\right)}
\providecommand{\T}{\mathrm{T}}                
\providecommand{\Ito}{It\^{o}~}
\providecommand{\tol}{{\ensuremath{\mathrm{TOL}}}}
\providecommand{\tolT}[1][]{{\ensuremath{\mathrm{TOL}_{\mathrm{T}}^{\mspace{2mu}#1}}}}
\providecommand{\tolS}[1][]{{\ensuremath{\mathrm{TOL}_{\mathrm{S}}^{\mspace{2mu}#1}}}}
\providecommand{\Dt}[1]{{\ensuremath{\Delta t^{\{#1\}}}}}
\providecommand{\DtMax}{{\ensuremath{\Delta t_{\mathrm{max}}}}}
\providecommand{\WL}[1]{\ensuremath{W^{\{#1\}}}}
\providecommand{\nRefine}{{\ensuremath{N_{\mathrm{refine}}}}}
\providecommand{\barX}{\overline{X}}
\providecommand{\barXT}[1][T]{\overline{X}_{#1} }
\providecommand{\barXL}[1][\ell]{\overline{X}_T^{ \{#1\}}}
\providecommand{\g}[1]{\ensuremath{g\mspace{-3mu} \left(\mspace{-1mu} #1 \mspace{-2mu}\right)}}
\providecommand{\gXT}{\ensuremath{\g{X_T}}}
\providecommand{\gBarXT}{\ensuremath{\g{\barXT}}}
\providecommand{\DlGO}[1][\ell]{\ensuremath{\Delta_{ #1 }\g{ \omega_{i,\ell} } }}
\providecommand{\DlG}[1][\ell]{{\ensuremath{\Delta_{ #1 } g }}}
\providecommand{\rhoBar}{\overline{\rho}}                
\providecommand{\rBar}{\overline{r}}                
\providecommand{\phiX}[1]{\ensuremath{\varphi_{\mspace{-3mu}x}\mspace{-3mu}\left( #1\right)}}
\providecommand{\phiXi}[2]{\ensuremath{\varphi_{\mspace{-2mu}#1}\mspace{-3mu}\left( #2\right)}}
\providecommand{\phiXX}[1]{\ensuremath{\varphi_{\mspace{-3mu}xx}\mspace{-3mu}\left( #1\right)}}
\providecommand{\phiXXX}[1]{\ensuremath{\varphi_{\mspace{-3mu}xxx}\mspace{-3mu}\left( #1\right)}}                
\providecommand{\phiXXXX}[1]{\ensuremath{\varphi_{\mspace{-3mu}xxxx}\mspace{-3mu}\left( #1\right)}} 
\providecommand{\phiBarX}[1]{\ensuremath{\overline{\varphi}_{\mspace{-2mu}x, #1 }}}                
\providecommand{\phiBarXi}[2]{\ensuremath{\overline{\varphi}_{\mspace{-2mu}#1, #2 }}}                
\providecommand{\yy}[2]{{Y^{(#1)}_{#2}}}
\providecommand{\yyi}[3]{{Y^{(#1)}_{#3,#2}}}
\providecommand{\ydi}[3]{{\Delta \yyi{#1}{#2}{#3}}}
\providecommand{\neqq}{{\neq}}
\providecommand{\Cc}{{\ensuremath{C_\text{C}}}}
\providecommand{\checkDA}{\widecheck{\Delta a} }
\providecommand{\tildeDA}{\widetilde{\Delta a} }
\providecommand{\checkDa}{\widecheck{\Delta a} }
\providecommand{\tildeDa}{\widetilde{\Delta a} }
\providecommand{\checkDb}{\widecheck{\Delta b} }
\providecommand{\tildeDb}{\widetilde{\Delta b} }
\providecommand{\checkDB}{\widecheck{\Delta b} }
\providecommand{\tildeDB}{\widetilde{\Delta b} }
\providecommand{\sumDGL}[1][\ell]{\ensuremath{\mathcal{S}_{\DGL}}}
\providecommand{\roundUp}[1]{\ensuremath{\left\lceil #1 \right\rceil}}
\providecommand{\A}[1]{\mathcal{A}\left(#1\right)}
\providecommand{\V}[1]{\mathcal{V}\left(#1\right)}
\providecommand{\AMLSimple}{\mathcal{A}_{{}_\mathcal{M L}}}
\providecommand{\CML}{\mathcal{C}_{{}_\mathcal{M L}}}
\begin{document}

\title[Construction of an MSE Adaptive Euler--Maruyama Method Applied
to MLMC]{Construction of a Mean Square Error Adaptive Euler--Maruyama
  Method With Applications in Multilevel Monte Carlo}

\author[H.~Hoel]{H{\aa}kon Hoel$^{\dagger,\ddagger}$}
\thanks{$^\dagger$ Computer, Electrical and Mathematical Sciences and Engineering (CEMSE) Division
King Abdullah University of Science and Technology, Thuwal 23955-6900, Kingdom
    of Saudi Arabia ({\tt juho.happloa@kaust.edu.sa, raul.tempone@kaust.edu.sa})}

\thanks{$^\ddagger$ Department of Mathematics, University of Oslo, P.O. Box1053, 
Blindern, NO--0316 Oslo, Norway (Corresponding author: {\tt haakonah@math.uio.no})}

\author[J.~H{\"a}pp{\"o}l{\"a}]{Juho H{\"a}pp{\"o}l{\"a}$^\dagger$}

\author[R.~Tempone]{Ra\'ul Tempone$^\dagger$}

\keywords{Adaptive time stepping, stochastic differential equations,
  multilevel Monte Carlo.}

\subjclass[2010]{Primary 65C20;  Secondary 65C05}

\begin{abstract}
  A formal mean square error expansion (MSE) is derived for
  Euler--Maruyama numerical solutions of stochastic differential
  equations (SDE).  The error expansion is used to construct a
  pathwise, \emph{a posteriori}, adaptive time-stepping Euler--Maruyama
  algorithm for numerical solutions of SDE, and the resulting algorithm is
  incorporated into a multilevel Monte Carlo (MLMC) algorithm for weak
  approximations of SDE. This gives an efficient MSE adaptive MLMC
  algorithm for handling a number of low-regularity approximation
  problems. In low-regularity numerical example problems, the
  developed adaptive MLMC algorithm is shown to outperform the uniform
  time-stepping MLMC algorithm by orders of magnitude, producing output
  whose error with high probability is bounded by $\tol>0$ at the
  near-optimal MLMC cost rate $\bigO{\tol^{-2} \log(\tol)^4}$.
\end{abstract}

\maketitle

\setcounter{tocdepth}{2}%hides all subsections from the table of contents
\tableofcontents

\section{Introduction}

SDE models are frequently applied in
mathematical finance~\cite{Shreve04, Platen06, Glasserman04}, where
an observable may, for example, represent the payoff of an option.
SDE are also used to model the dynamics of multiscale physical, chemical or
biochemical systems~\cite{Gillespie00,Milstein03,Skeel02, Talay02},
where, for instance, concentrations, temperature and energy
may be sought observables.

Given a filtered, complete probability space $(\Omega, \FCal, (\FCal_t)_{0\leq t \leq T} , \text{P})$, we
consider the \Ito SDE
\begin{equation}\label{eq:sdeProblem}
\begin{split}
dX_t &= a(t,X_t) dt + b(t,X_t) dW_t, \qquad t \in (0,T],\\
X_0 &= x_0,
\end{split}
\end{equation}
where $X:[0,T] \times \Omega \to \R^{d_1}$ is a stochastic process with
randomness generated by a $K$-dimensional Wiener process, $W:[0,T]
\times \Omega \to \R^{d_2}$, with independent components, $W = (W^{(1)},
W^{(2)}, \ldots, W^{(d_2)})$, and $a: [0,T] \times \R^{d_1} \to\R^{d_1}$ and $b:
[0,T] \times \R^{d_1} \to \R^{d_1 \times d_2}$ are the drift and diffusion
coefficients, respectively. The considered filtration $\FCal_t$ 
is generated from the history of the Wiener process $W$ up to 
time $t$ and the possible outcomes of the initial data $X_0$, and
succeedingly completed with all $\mathrm{P}$-outer measure zero
sets of the sample space $\Omega$. That is
\[
\FCal_t := \overline{\sigma( \{W_{s}\}_{0\le s \le t}) \lor \sigma(X_0) }
\]
where the operation $\mathcal{A}\lor \mathcal{B}$ denotes the $\sigma$-algebra
generated by the pair of $\sigma$-algebras $\mathcal{A}$ and $\mathcal{B}$, i.e.,
$\mathcal{A}\lor \mathcal{B} := \sigma(\mathcal{A}, \mathcal{B})$,
and $\overline{\mathcal{A}}$ denotes the P-outer measure 
null-set completion of $\mathcal{A}$,
\begin{equation*}\label{eq:nullSetCompletion}
\overline{\mathcal{A}} := \mathcal{A} \lor \left\{ A \subset \Omega\, \Big|\,  \inf_{\hat A
  \in \{ \check A \in \mathcal{A}\, | \, \check A \supset A \}} \Prob{\hat A} = 0\right\}.
\end{equation*}

The contributions of this work are twofold. First, an \aposteriori
adaptive time-stepping algorithm for computing numerical realizations
of SDE using the Euler--Maruyama method is developed. And second, for
a given observable $g:\R^{d_1} \to \R$, we construct a mean square error
(MSE) adaptive time-stepping multilevel Monte Carlo (MLMC) algorithm
for approximating the expected value, $\Exp{g(X_T)}$, under the
following constraint:
\begin{equation}\label{eq:mlmcGoal}
\Prob{\abs{ \Exp{ \gXT } - \mathcal{A}} \leq \tol} \geq 1-\delta.
\end{equation}
Here, $\mathcal{A}$ denotes the algorithm's approximation of
$\Exp{g(X_T)}$ (examples of which are given in Equations~\eqref{eq:mcApprox} 
and~\eqref{eq:MLestimator}) and $\tol$ and $\delta >0$ are
accuracy and confidence constraints, respectively. 

The rest of this paper is organized as follows: First, in
Section~\ref{sec:mcem}, we review the Monte Carlo methods and their
use with the Euler--Maruyama integrator.  This is followed by
discussion of Multilevel Monte Carlo methods and adaptivity for SDEs.
The theory, framework and numerical examples for the MSE adaptive
algorithm is presented in Section~\ref{sec:mseAdaptivity}.  In
Section~\ref{sec:adaptiveMLMC}, we develop the framework for the MSE
adaptive MLMC algorithm and present implementational details in
algorithms with pseudocode. In Section~\ref{sec:MlmcExamples}, we
compare the performance of the MSE adaptive and uniform MLMC
algorithms in a couple of numerical examples, one of which is a
low-regularity SDE problem.  Finally, we present brief conclusions
followed by technical proofs and the extension of the main result to
higher-dimensional problems in the appendices.

\subsection{Monte Carlo Methods and the Euler--Maruyama Scheme.}\label{sec:mcem}

Monte Carlo (MC) methods provide a robust and typically non-intrusive way
to compute weak approximations of SDE. The convergence rate of MC
methods does not depend on the dimension of the problem; 
for that reason, MC is particularly effective on multi-dimensional problems. In
its simplest form, an approximation by the MC method consists of the following two
steps:
\begin{enumerate}

\item[(A.1)] Compute $M$ independent and identically distributed numerical realizations, $\barXT(\omega_{m})$, of the
  SDE~\eqref{eq:sdeProblem}.

\item[(A.2)] Approximate $\Exp{\g{X_T} }$ by the sample average
\begin{equation*}\label{eq:mcApprox}
\mathcal{A} :=  \sum_{m=1}^M \frac{\g{\barXT(\omega_{m})}}{M}.
\end{equation*}
\end{enumerate}
Regarding ordinary differential equations (ODE), the theory for 
numerical integrators of different orders for scalar SDE is vast. Provided
sufficient regularity, higher order integrators generally yield higher
convergence rates ~\cite{Kloeden92}. With MC methods it is straightforward to determine
that the goal~\eqref{eq:mlmcGoal} is fulfilled at the computational
cost $\bigO{\tol^{-2-1/\alpha}}$, where $\alpha\geq 0$ denotes the 
weak convergence rate of the numerical method, as defined 
in Equation~\eqref{eq:weakRate}.

As a method of temporal discretization, the Euler--Maruyama scheme is given by
\begin{equation}\label{eq:eulerMaruyama}
\begin{split}
\barX_{t_{n+1}} & =  \barX_{t_n} + a(t_n, \barX_{t_n}) \Delta t_n + b(t_n, \barX_{t_n}) \Delta W_n,\\
\barX_0     & = x_0,
\end{split}
\end{equation}
using time steps $\Delta t_n = t_{n+1}-t_n$ and Wiener increments 
$\Delta W_{n} = W_{t_{n+1}} - W_{t_n} \sim N(0, \Delta t_n I_{d_2} )$,
where $I_{d_2}$ denotes the $d_2\times d_2$ identity matrix.
In this work, we will focus exclusively on Euler--Maruyama time-stepping. 
The Euler--Maruyama scheme, which may be considered the SDE-equivalent of 
the forward-Euler method for ODE, has, under sufficient regularity,
first-order weak convergence rate
\begin{equation}\label{eq:weakRate}
\abs{ \Exp{ \g{X_T} - \g{\barXT} }  }  = \bigO{\max_n \Delta t_n},
\end{equation}
and also first-order MSE convergence rate
\begin{equation}\label{eq:strongRate}
\Exp{  \parenthesis{ \g{X_T} - \g{\barXT} }^2  }  = \bigO{  \max_n \Delta t_n},
\end{equation}
cf.~\cite{Kloeden92}. For multi-dimensional SDE problems, higher order
schemes are generally less applicable, as either the diffusion
coefficient matrix has to fulfill a rigid commutativity condition, or
Levy areas, required in higher order numerical schemes, have to be
accurately approximated to achieve better convergence rates
than those obtained with the Euler--Maruyama method
\cite{Kloeden92}.
  
\subsection{Uniform and Adaptive Time-Stepping MLMC}

MLMC is a class of MC methods that uses a hierarchy of subtly
correlated and increasingly refined realization ensembles to reduce
the variance of the sample estimator. In comparison with single-level
MC, MLMC may yield orders of magnitude reductions in the computational
cost of moment approximations. MLMC was first introduced
by Heinrich~\cite{Heinrich98,HeinrichSin99} for approximating integrals
that depend on random parameters. For applications in SDE problems,
Kebaier~\cite{Kebaier05} introduced a two-level MC method and demonstrated
its potential efficiency gains over single-level
MC. Giles~\cite{Giles08} thereafter developed an MLMC algorithm for SDE,
exhibiting even higher potential efficiency gains. Presently, MLMC is a
vibrant and growing research topic, (cf.~\cite{Giles15, Giles14, Barth12, Cliffe11, 
Teckentrup13, Mishra12,AbdulLateef14}, and references therein).

\subsubsection{MLMC Notation} \label{subsec:mlmcNotation}

We define the multilevel estimator by
\begin{equation} \label{eq:MLestimator}
\begin{split}
\AMLSimple := \sum_{\ell=0}^L \sum_{i=1}^{M_\ell} 
\frac{ \DlGO  }{M_\ell},
\end{split}
\end{equation}
where 
\[
 \Delta_\ell g(\omega) := 
 \begin{cases} 
    \g{\barXL[0](\omega) }, &\text{if} \quad \ell = 0,\\
    \g{\barXL[\ell](\omega) } - \g{\barXL[\ell-1](\omega) }, & \text{otherwise.}
 \end{cases}
\]
Here, the positive integer, $L$, denotes the final level of the estimator, $M_\ell$ is the number of sample
realizations on the $\ell$-th level, and the realization pair,
$\barXL(\omega_{i,\ell})$ and $\barXL[\ell-1](\omega_{i,\ell})$, are
generated by the Euler--Maruyama method~\eqref{eq:eulerMaruyama} using
the \emph{same} Wiener path, $W(\omega_{i,\ell})$, sampled on the
respective meshes, $\Dt{\ell}$ and $\Dt{\ell-1}$,
(cf.~Figure~\ref{fig:Brownian_Bridge}). For consistency, we also
introduce the notation $\WL{\ell}(\omega)$ for the Wiener path
restricted to the mesh $\Dt{\ell}(\omega)$.

\begin{figure}[h!] 
\centering
\includegraphics[width=0.49\textwidth]{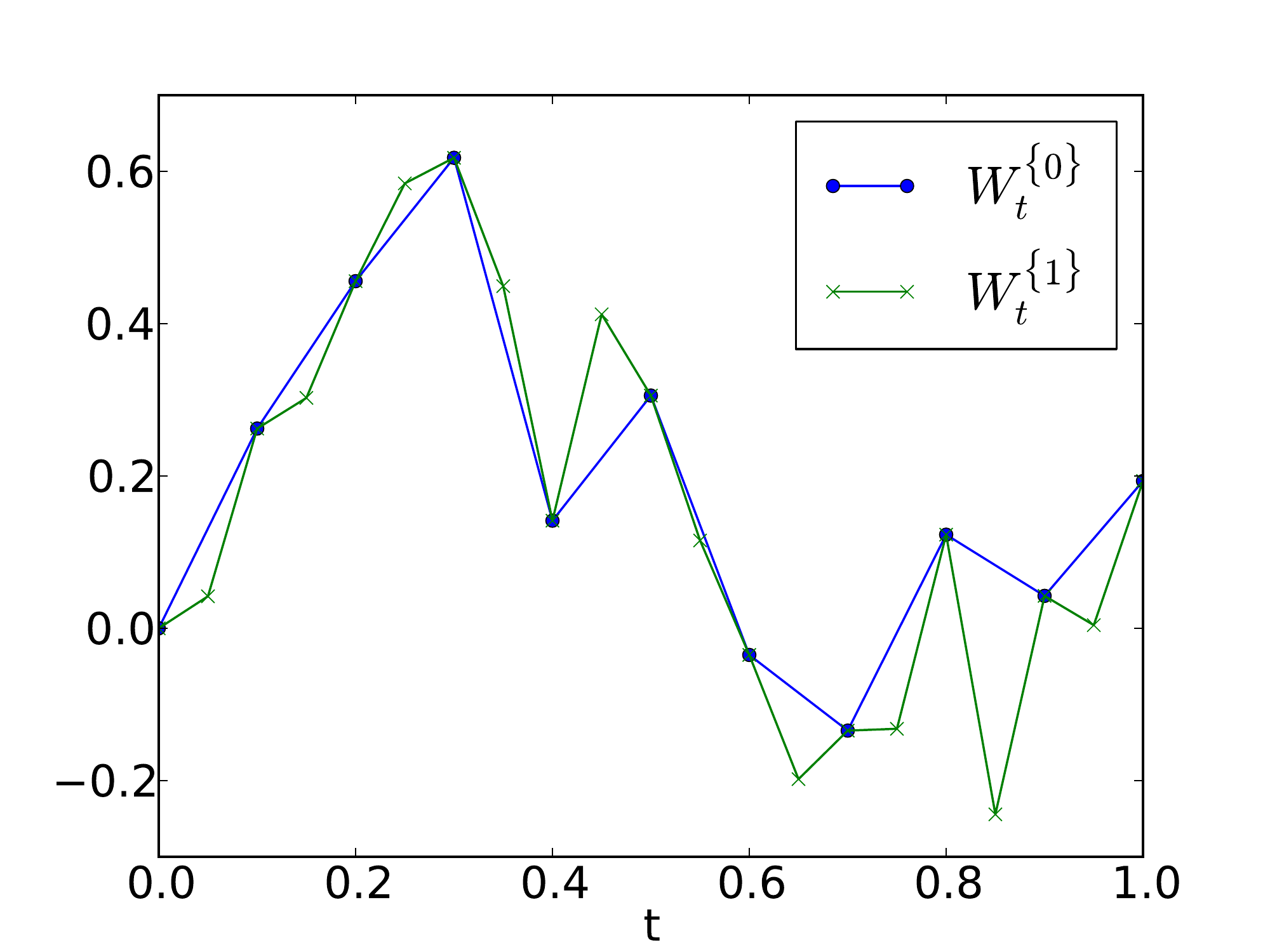}
\includegraphics[width=0.49\textwidth]{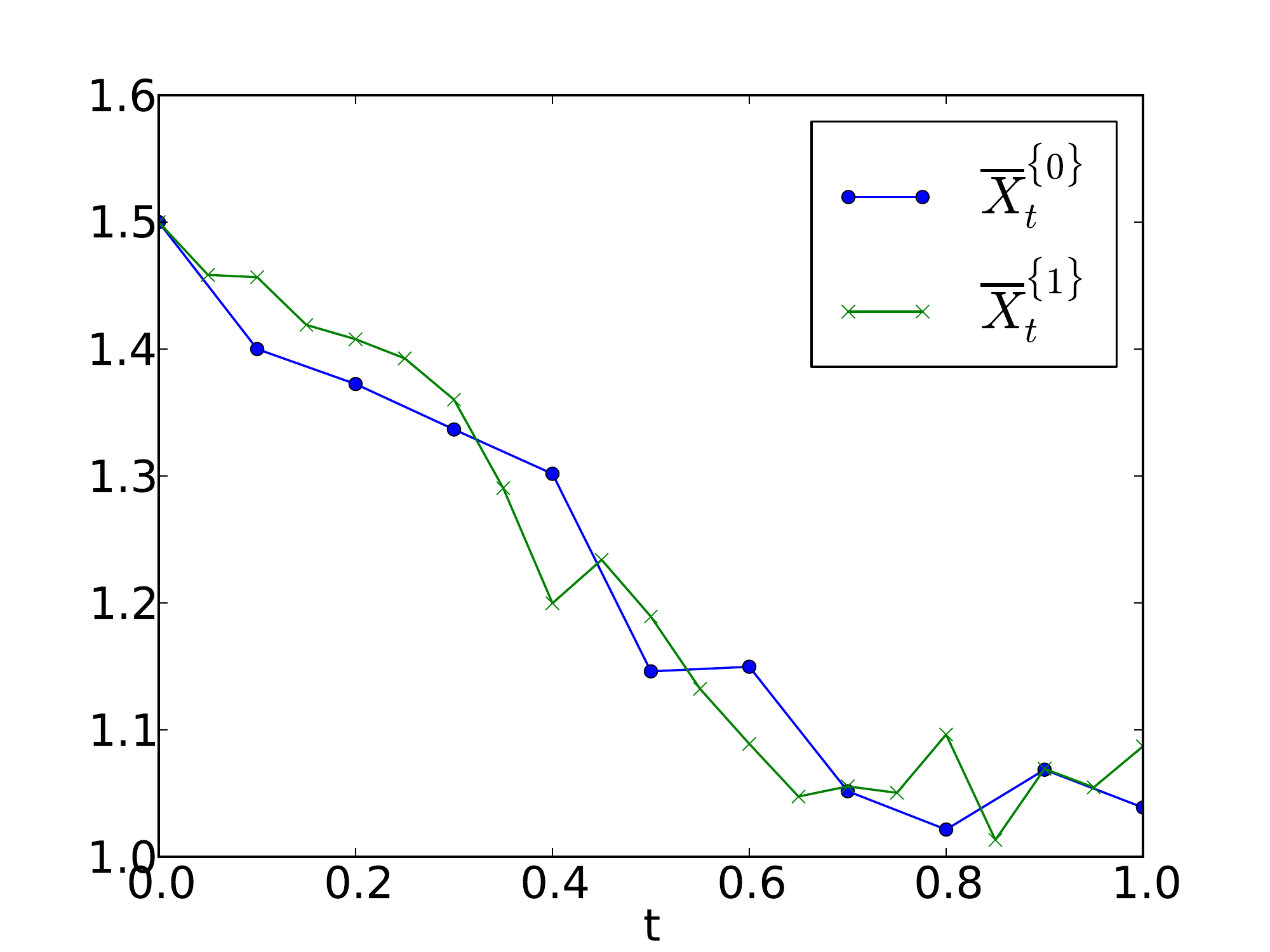}
\caption{({\bf{Left}}) A sample Wiener path, $W$, generated on
  the coarse mesh, $\Dt{0}$, with uniform step size $1/10$ (blue line
  ). The path is thereafter Brownian bridge interpolated onto a
  finer mesh, $\Dt{1}$, which has uniform step size of $1/20$ (green line).
  (\textbf{Right}) Euler--Maruyama numerical solutions of the
  Ornstein-Uhlenbeck SDE problem, $dX_t = 2(1-X_t)dt+ 0.2dW_t$, with
  initial condition $X_0 = 3/2$, are computed on the meshes $\Dt{0}$
  (blue line) and $\Dt{1}$ (green line) using Wiener
  increments from the respective path resolutions. }
\label{fig:Brownian_Bridge}
\end{figure}

\subsubsection{Uniform Time-Stepping MLMC}\label{subsec:uniformMLMC}

In the uniform time-stepping MLMC introduced in~\cite{Giles08}, the
respective SDE realizations $\{ \barXL[\ell]\}_\ell$ are constructed
on a hierarchy of uniform meshes with geometrically decaying step
size, $\mathrm{min} ~\Dt{\ell} = \mathrm{max} ~\Dt{\ell} = T/N_\ell$,
and $N_\ell = c^\ell N_0$ with $c \in \mathbb{N}\setminus \{1\}$ and
$N_0$ a finite integer. For simplicity, we consider the uniform
time-stepping MLMC method with $c=2$.

\subsubsection{Uniform Time-Stepping MLMC Error and Computational Complexity}
By construction, the multilevel estimator is telescoping in expectation, i.e.,
$\Exp{\AMLSimple} =\Exp{\g{\barXL[L]}}$. Using this property, we may
conveniently bound the multilevel approximation error:
\[
\begin{split}
 \abs{\Exp{\gXT} - \AMLSimple}
 \leq   \underbrace{\abs{ \Exp{\gXT - \g{ \barXL[L] }}}}_{=:\mathcal{E}_T }
+\underbrace{\abs{\Exp{ \g{ \barXL[L] } - \AMLSimple}}}_{=:\mathcal{E}_S}.
\end{split}
\]
The approximation goal~\eqref{eq:mlmcGoal} is then reached by ensuring that
the sum of the \emph{bias}, $\mathcal{E}_T$, and the \emph{statistical
  error}, $\mathcal{E}_S$, is bounded from above by $\tol$, e.g., by
the constraints $\mathcal{E}_T \leq \tol/2$ and 
$\mathcal{E}_S \leq \tol/2$, (see Section~\ref{subsec:errorControl} for more details 
on the MLMC error control). For the MSE error goal,
\[
\Exp{ \parenthesis{\Exp{\gXT} - \AMLSimple}^2 } \leq \tol^2,
\]
the following theorem states the optimal computational cost for MLMC:

\begin{theorem}[Computational cost of deterministic MLMC; Cliffe et al.~\cite{Cliffe11}]\label{thm:cliffeComplexity}
  Suppose there are constants $\alpha, \beta, \gamma$ such that
  $\alpha \geq \frac{\min(\beta, \gamma)}{2}$ and
\begin{itemize}
 
 \item[(i)] $\quad \abs{ \Exp{\g{\barXL[\ell]} - \g{X_T}  } } = \bigO{N_\ell^{- \alpha}}$,
 
 \item[(ii)] $\quad \Var{\DlG  }  = \bigO{N_\ell^{- \beta }}$,
 
 \item[(iii)] $\quad \text{Cost}(\DlG ) = \bigO{N_\ell^{\gamma}}$.  

 \end{itemize}

Then, for any $\tol<e^{-1}$, there exists an $L$ and a sequence 
$\{M_\ell\}_{\ell=0}^L$ such that 
\begin{equation}\label{eq:cliffeMseGoal}
 \Exp{ \parenthesis{\AMLSimple - \Exp{\g{X_T}} }^2 } \leq \tol^2, 
\end{equation}
and 
\begin{equation}
\label{eq:MLMCCost}
 \text{Cost}(\AMLSimple) = 
 \begin{cases} 
\bigO{\tol^{-2}}, & \text{if } \beta> \gamma,\\
 \bigO{\tol^{-2}\log(\tol)^2}, & \text{if }  \beta = \gamma,\\
  \bigO{\tol^{-2 + \frac{\beta-\gamma}{\alpha}} } , &\text{if }  \beta < \gamma.
   \end{cases}
\end{equation}
 \end{theorem}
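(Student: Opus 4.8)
The plan is to follow the classical MLMC complexity analysis, treating the bias and statistical error separately and then optimizing the total cost subject to the constraint that their sum of squares is bounded by $\tol^2$. First I would use the telescoping property $\Exp{\AMLSimple} = \Exp{\g{\barXL[L]}}$ together with assumption (i) to bound the bias: $\abs{\Exp{\AMLSimple} - \Exp{\g{X_T}}} = \abs{\Exp{\g{\barXL[L]} - \g{X_T}}} \leq c_1 N_L^{-\alpha}$. Requiring this to be at most $\tol/\sqrt{2}$ forces a choice $L = \ceil{\frac{1}{\alpha}\log_c(\sqrt{2}\, c_1 N_0 \tol^{-1})} + \bigO{1}$, so $L = \bigO{\log(\tol^{-1})}$ and $N_L = \bigO{\tol^{-1/\alpha}}$. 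This pins down the number of levels.

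Next I would handle the variance of the estimator. Since the $\Delta_\ell g$ on different levels are independent and within a level the $M_\ell$ samples are i.i.d., $\Var{\AMLSimple} = \sum_{\ell=0}^L V_\ell / M_\ell$ where $V_\ell := \Var{\DlG}$, and by (ii) we have $V_\ell \leq c_2 N_\ell^{-\beta}$. The remaining mean-square error is then $\Exp{(\AMLSimple - \Exp{\g{X_T}})^2} = (\text{bias})^2 + \Var{\AMLSimple}$, so it suffices to also enforce $\Var{\AMLSimple} \leq \tol^2/2$. Using a Lagrange-multiplier argument to minimize the total cost $\sum_\ell M_\ell \cdot \text{Cost}(\DlG) \leq c_3 \sum_\ell M_\ell N_\ell^\gamma$ subject to $\sum_\ell V_\ell/M_\ell \leq \tol^2/2$, the optimal (real-valued) choice is $M_\ell \propto \sqrt{V_\ell / N_\ell^\gamma} \propto N_\ell^{-(\beta+\gamma)/2}$, with proportionality constant $\frac{2}{\tol^2}\sum_{k=0}^L \sqrt{V_k N_k^\gamma}$. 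Rounding up to integers costs only an extra additive $\bigO{1}$ per level, hence an extra $\bigO{\sum_\ell N_\ell^\gamma} = \bigO{N_L^\gamma} = \bigO{\tol^{-\gamma/\alpha}}$; the side condition $\alpha \geq \min(\beta,\gamma)/2$ is exactly what guarantees this rounding term is dominated by the main term in all three regimes.

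Plugging the optimal $M_\ell$ back in gives total cost $\bigO{\tol^{-2} \left(\sum_{\ell=0}^L N_\ell^{(\gamma-\beta)/2}\right)^2}$, and the three cases come from summing the geometric series $\sum_\ell N_\ell^{(\gamma-\beta)/2}$ with $N_\ell = 2^\ell N_0$: if $\beta > \gamma$ the sum converges to $\bigO{1}$, giving $\bigO{\tol^{-2}}$; if $\beta = \gamma$ each term is $\bigO{1}$ and there are $L+1 = \bigO{\log(\tol^{-1})}$ of them, giving $\bigO{\tol^{-2}\log(\tol)^2}$; if $\beta < \gamma$ the sum is dominated by its last term $\bigO{N_L^{(\gamma-\beta)/2}} = \bigO{\tol^{-(\gamma-\beta)/(2\alpha)}}$, whose square contributes $\tol^{-(\gamma-\beta)/\alpha}$, giving $\bigO{\tol^{-2+(\beta-\gamma)/\alpha}}$; in this last case one also checks the rounding term $\bigO{\tol^{-\gamma/\alpha}}$ is no larger, which again uses $\alpha \geq \gamma/2 \geq \min(\beta,\gamma)/2$ is not quite enough on its own — actually one compares exponents directly and the hypothesis $\alpha \geq \min(\beta,\gamma)/2$ suffices. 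The main obstacle, and the only genuinely delicate point, is the bookkeeping around integer rounding of $M_\ell$ and verifying in each regime that the rounding overhead does not dominate; everything else is geometric-series summation and a routine constrained optimization. I would present the optimization via Cauchy--Schwarz (which directly yields the optimal allocation and the clean cost expression) rather than Lagrange multipliers, as it avoids second-order checks.
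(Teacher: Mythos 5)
Your argument is correct, and it is essentially the standard proof of this result: the paper itself gives no proof here but cites Cliffe et al.~\cite{Cliffe11}, and your bias/variance splitting, the choice $N_L = \bigO{\tol^{-1/\alpha}}$ with $L=\bigO{\log(\tol^{-1})}$, the allocation $M_\ell \propto \sqrt{V_\ell/N_\ell^{\gamma}}$, and the geometric-series case analysis reproduce exactly the argument in that reference. Your handling of the only delicate point is also right: the ceiling of $M_\ell$ adds at most $\bigO{N_L^{\gamma}}=\bigO{\tol^{-\gamma/\alpha}}$ to the cost, and comparing exponents case by case shows this is dominated by the main term precisely when $\alpha \geq \min(\beta,\gamma)/2$ (in the regime $\beta<\gamma$ the needed inequality is $\beta \le 2\alpha$, which is what the hypothesis supplies), so no gap remains.
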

 In comparison, the computational cost of achieving the
 goal~\eqref{eq:cliffeMseGoal} with single-level MC is
 $\bigO{\tol^{-2-\gamma/\alpha}}$. Theorem~\ref{thm:cliffeComplexity}
 thus shows that for any problem with $\beta>0$, MLMC will
 asymptotically be more efficient than single-level MC. Furthermore,
 the performance gain of MLMC over MC is particularly apparent in
 settings where $\beta \geq \gamma$. The latter property is linked to
 the contributions of this work. In low-regularity SDE
 problems, e.g., Example~\ref{ex:mlmcSingularityGbm} below 
and~\cite{Yan02, Avikainen09}, the uniform time-stepping Euler--Maruyama
 results in convergence rates for which $\beta < \gamma$.
 More sophisticated integrators can preserve rates such that 
 $\beta \geq \gamma$. 

% The adaptive time-stepping Euler--Maryama
%  integrators that we introduce in the sequel are an example of this.
% We will show that, for low-regularity problems, adaptive
% time-stepping MLMC can be orders of magnitude less costly than uniform
%  time-stepping MLMC. In case this adaptivity is stochastic, i.e., the temporal mesh
%  is different for each MC realization, the workload in equation~\eqref{eq:MLMCCost}
%  needs to be understood in expectation sense.

\begin{remark}
  Similar accuracy vs. complexity results to
  Theorem~\ref{thm:cliffeComplexity}, requiring slightly stronger
  moment bounds, have also been derived for the approximation
  goal~\eqref{eq:mlmcGoal} in the asymptotic setting when $\tol \downarrow 0$,
  cf.~\cite{HoelMLMC14, Collier14}.
\end{remark}

\subsubsection{MSE A Posteriori Adaptive Time-Stepping}

In general, adaptive time-stepping algorithms seek to fulfill one of 
two equivalent goals~\cite{Bangerth03}:
\begin{enumerate}

\item[(B.1)] Provided a computational budget $N$ and a norm $\| \cdot \|$, determine the possibly non-uniform mesh,
 which minimizes the error $\left\| \g{X_T} - \g{\barX_T} \right\|$.

\item[(B.2)] Provided an error constraint $\left\|\g{X_T} - \g{\barX_T} \right\| \leq
  \tol$, determine the possibly non-uniform mesh, which achieves
  the constraint at the minimum computational cost.
\end{enumerate}

Evidently, the refinement criterion of an adaptive algorithm depends
on the error one seeks to minimize.  In this work, we consider
adaptivity goal (B.1) with the error measured in terms of
the MSE. This error measure is suitable for MLMC algorithms
as it often will lead to improved convergence rates, $\beta$ (since
$\Var{\DlG } \leq \Exp{\DlG^2} $), which by
Theorem~\ref{thm:cliffeComplexity} may reduce the computational
cost of MLMC. In Theorem~\ref{thm:mseExpansion1D}, 
we derive the following error expansion for the MSE of
Euler--Maruyama numerical solutions of the SDE~\eqref{eq:sdeProblem}:
\begin{equation}\label{eq:errorExpansionIntro}
\Exp{\parenthesis{g(X_T) -\gBarXT}^2} = \Exp{ \sum_{n=0}^{N-1}   \rhoBar_n
  \Delta t_n^2  + \littleO{\Delta t_n^2} },
\end{equation}
where the error density, $\rhoBar_n$, is a function of the local error
and sensitivities from the dual solution of the SDE problem, as
defined in~\eqref{eq:rhoBar}. The error
expansion~\eqref{eq:errorExpansionIntro} is an \aposteriori
error estimate for the MSE, and in our
adaptive algorithm, the mesh is refined by equilibration of the
expansion's \emph{error indicators} 
\begin{equation}\label{eq:rBarIntro}
\rBar_n  := \rhoBar_n \Delta t_n^2, \quad \text{for} \quad n=0,1,\ldots, N-1.
\end{equation}

\subsubsection{An MSE Adaptive MLMC Algorithm}\label{subsubsec:adaptiveMlmcMesh}
Using the described MSE adaptive algorithm, we construct an MSE
adaptive MLMC algorithm in Section~\ref{sec:adaptiveMLMC}. The MLMC
algorithm generates SDE realizations, $\{ \barXL[\ell]\}_\ell$,
on a hierarchy of \emph{pathwise} adaptively refined meshes,
$\{ \Dt{\ell} \}_\ell$. The meshes are nested, i.e., for all realizations
$\omega \in \Omega$, 
\[
\Dt{0}(\omega) \subset \Dt{1}(\omega) \subset \ldots \Dt{\ell}(\omega)
\subset \ldots, 
\]
with the constraint that the number of time steps in
$\Dt{\ell}$,
$ \abs{ {\Dt{\ell}}}$,
is bounded  by
$2 N_\ell$:
\begin{equation*}
\abs{ \Dt{\ell} } < 2N_\ell= 2^{\ell+2} N_{-1}.
\end{equation*}
Here, $N_{-1}$ 
denotes the pre-initial number of time steps; it is a bounded 
integer set in advance of the computations.  
This corresponds to the hierarchy setup for the uniform 
time-stepping MLMC algorithm in Section~\ref{subsec:uniformMLMC}.

The potential efficiency gain of adaptive MLMC is experimentally 
illustrated in this work using the drift blow-up problem
\[
dX_t =  \frac{r X_t}{|t-\xi|^p} \,dt+ \sigma X_t\, dW_t, \quad X_0 = 1.
\] 
This problem is addressed in Example~\ref{ex:mlmcSingularityGbm} for the three
different singularity exponents $p = 1/2, 2/3$ and $3/4$, with a
pathwise, random singularity point $\xi \sim U(1/4, 3/4)$, an observable
$g(x) = x$, and a final time $T=1$.  For the given singularity
exponents, we observe experimental deteriorating convergence rates, $\alpha = (1-p)$
and $\beta =2(1-p)$, for the uniform time-stepping Euler--Maruyama integrator, while for
the adaptive time-step Euler--Maruyama we observe $\alpha \approx
1$ and $\beta \approx 1$. Then, as predicted by
Theorem~\ref{thm:cliffeComplexity}, we also observe an order
of magnitude difference in computational cost between the two algorithms
(cf.~Table~\ref{tab:complexityResults}).
\begin{table}[h!]
\caption{Observed computational cost -- disregarding
    $\log(\tol)$ multiplicative factors of finite order -- for the drift blow-up study in 
    Example~\ref{ex:mlmcSingularityGbm}.  } 
  \label{tab:complexityResults}
  \centering
  {%\providecommand{\arraystretch}{1.3}
  \begin{tabular}{p{4.5cm}p{2.8cm}p{2.8cm}}
    \hline \noalign{\smallskip}
     & \multicolumn{2}{ c}{Observed computational cost}\\
    %\hline 
     \multicolumn{1}{c}{Singularity exponent $p$} &  Adaptive MLMC & Uniform MLMC \\
    \noalign{\smallskip}\hline\noalign{\smallskip}
    \multicolumn{1}{c}{$1/2$} & $\tol^{-2}$ & $\tol^{-2}$\\
    \multicolumn{1}{c}{$2/3$} & $\tol^{-2}$ & $\tol^{-3}$\\
    \multicolumn{1}{c}{$3/4$} & $\tol^{-2}$ & $\tol^{-4}$\\
     \noalign{\smallskip}\hline\noalign{\smallskip}
   % \multicolumn{3}{c}{~}
    \end{tabular}}
\end{table}

\subsubsection{Earlier Works on Adaptivity for SDE}

Gaines' and Lyons' work~\cite{Gaines97} is one of the
seminal contributions on adaptive algorithms for SDE. They present an algorithm
that seeks to minimize the \emph{pathwise} error of the mean and
variation of the local error conditioned on the $\sigma$-algebra
generated by (i.e., the values at which the
Wiener path has been evaluated in order to numerically
integrate the SDE realization) $\{W_{t_n}\}_{n=1}^{N}$. The method may be used in
combination with different numerical integration methods, and an
approach to approximations of potentially needed 
Levy areas is proposed, facilitated by a binary tree
representation of the Wiener path realization at its
evaluation points. As for \aposteriori
adaptive algorithms, the error indicators in Gaines' and Lyons' algorithm
are given by products of local errors and weight terms, but, unlike
in \aposteriori methods, the weight
terms are computed from \apriori estimates, making their
approach a hybrid one.

Szepessy et al.~\cite{Szepessy01} introduced \aposteriori weak
error based adaptivity for the
Euler--Maruyama algorithm with numerically computable error indicator
terms.  Their
development of weak error adaptivity took inspiration from Talay
and Tubaro's seminal work~\cite{Talay90}, where an error
expansion for the weak error was derived for the Euler--Maruyama
algorithm when uniform time steps were used. In~\cite{HoelMLMC14}, Szepessy
et al.'s weak error adaptive algorithm was used in the construction of a
weak error adaptive MLCM algorithm. To the best of our knowledge, the
present work is the first on MSE \aposteriori adaptive algorithms for
SDE both in the MC- and MLMC setting.

Among other adaptive algorithms for SDE, many have refinement
criterions based only or primarily on estimates of the local error.
For example in \cite{Hofman00}, where the step-size depends on the size of
the diffusion coefficient for a MSE Euler--Maruyama adaptive
algorithm; in \cite{Mattingly05}, the step-size is controlled by the
variation in the
size of the drift coefficient in the constructed
Euler--Maruyama adaptive algorithm, which preserves the long-term ergodic
behavior of the true solution for many SDE problems;
and in \cite{Silvana12}, a local error based adaptive Milstein
algorithm is developed for solving multi-dimensional chemical Langevin
equations.

\addtocontents{toc}{\protect\setcounter{tocdepth}{1}}

\addtocontents{toc}{\protect\setcounter{tocdepth}{2}}

\section{Derivation of the MSE A Posteriori Adaptive Algorithm}\label{sec:mseAdaptivity}
In this section, we construct an MSE \aposteriori adaptive algorithm
for SDE whose realizations are numerically integrated by the Euler--Maruyama
algorithm~\eqref{eq:eulerMaruyama}. Our goal is, in rough terms, to obtain an 
algorithm for solving the SDE problem~\eqref{eq:sdeProblem} that for a
fixed number of intervals $N$, determines the time-stepping, $\Delta t_0,
\Delta t_1, \ldots, \Delta t_{N-1}$ such that the MSE,
$\Exp{ \parenthesis{ \gBarXT - \g{X_T} }^2 }$ is minimized.  That is,
\begin{equation}\label{eq:adaptivityGoal}
\Exp{ \parenthesis{ \gBarXT -  \g{X_T} }^2 } \to \min!, \quad N \quad \text{given}
\end{equation}
The derivation of our adaptive algorithm consists of two steps. First,
an error expansion for the MSE is presented in Theorem~\ref{thm:mseExpansion1D}. Based
on the error expansion, we thereafter construct a mesh refinement
algorithm. At the end of the section, we apply the adaptive algorithm
to a few example problems.

\subsection{The Error Expansion}
Let us now present a leading-order error expansion for the
MSE~\eqref{eq:adaptivityGoal} of the SDE problem~\eqref{eq:sdeProblem}
in the one-dimensional (1D) setting, i.e., when $X_t \in \R$ and the drift and diffusion
coefficients are respectively of the form $a: [0,T] \times \R \to \R$
and $b: [0,T] \times \R \to \R$.  An extension of the MSE error
expansion to multi-dimensions is given in
Appendix~\ref{subsec:mseHihgerDim}. To state the error expansion
Theorem, some notation is needed. Let $X_s^{x,t}$ denote the
solution of the SDE~\eqref{eq:sdeProblem} at time $s \ge t$, when the
initial condition is $X_t =x$ at time $t$, i.e.,
\begin{equation}\label{eq:sdeFlow}
X_s^{x,t} := x + \int_t^s a(u, X_u) du + \int_t^s b(u, X_u) dW_u, \qquad s\in [t, T],
\end{equation}
and in light of this notation, $X_t$ is shorthand for
$X_t^{x_0,0}$. For a given observable $g$, the payoff-of-flow map
function is defined by $\varphi(t, x)= g(X_T^{x,t})$. We 
also make use of the following function space notation
\[
\begin{split}
&C(U)             := \{f:U \to \R \, |\, f \text{ is continuous} \},\\
&C_b(U)           := \{f:U \to \R \, |\, f \text{ is continuous and bounded} \},\\
&C_b^k(\R)         := \Big\{f:\R \to \R \, |\, f \in C(\R) \text{ and
} \frac{d^j}{dx^j} f \in C_b(\R) \text{ for all integers }  1 \le j \leq k \Big\},\\
&C_b^{k_1,k_2}([0,T] \times \R):= \Big\{f:[0,T] \times \R \to \R \, |\, f \in C([0,T] \times \R) \text{ and } \\
&\quad \partial_t^{j_1}\partial_x^{j_2} f  \in C_b([0,T]\times \R) 
\text{ for all integers s.t. }  j_1 \leq k_1 \text{ and } 1 \le j_1+j_2 \leq k_2 \Big\}.
\end{split}
\]

We are now ready to present our mean square expansion result, namely,
\begin{theorem}[1D MSE leading-order error expansion]~\label{thm:mseExpansion1D} 
Assume that drift and diffusion coefficients and input data of the SDE~\eqref{eq:sdeProblem} 
fulfill
\begin{enumerate}
\item[(R.1)]
$a,b \in C_b^{2,4}([0,T]\times \R)$,

\item[(R.2)] there exists a constant $C>0$ such that
\begin{align*}
%|a(t,x) - a(t,y)| + |b(t,x) - b(t,y)| &\leq C |x-y|, & \forall x, y \in \R \text{ and } \forall t \in [0,T],\\
|a(t,x)|^2 + |b(t,x)|^2 & \leq C(1+|x|^2), & \forall x \in \R \text{ and } \forall t \in [0,T],\\
%|a(s,x) - a(t,x)| + |b(s,x) - b(t,x)| &\leq C(1 + |x|)\sqrt{|s-t|}, & \forall x\in \R \text{ and } \forall s,t \in [0,T],
\end{align*}

\item [(R.3)] $g' \in C^3_b(\R)$ and 
there exists a $k \in \N$ such 
\begin{equation}\label{eq:growthG}
|g(x)| + |g'(x)| \leq C(1+|x|^k), \qquad \forall x \in \R,
\end{equation}

\item[(R.4)] for the initial data, $X_0 \in \FCal_0$ and $\Exp{|X_0|^p} < \infty$ for all $p\ge 1$.
\end{enumerate}
Assume further the mesh points $0=t_0<t_1< \ldots <t_N = T$ 
\begin{enumerate}
\item[(M.1)] are stopping times for which
$t_n \in \FCal_{t_{n-1}}$ for $n=1,2,\ldots, N$,

\item[(M.2)] for all mesh realizations, there exists a deterministic integer, $\check N$, and a
  $c_1>0$ such that 
$c_1\check{N} \le N \le \check N$ and
a $c_2>0$ such that $\max_{n\in \{0,1,\ldots, N-1\}} \Delta t_n < c_2 \check{N}^{-1}$,

\item[(M.3)] and there exists a $c_3>0$ such that for all $p \in [1,8]$ 
and $n \in \{0,1,\ldots, \check{N}-1\}$
\[
\Exp{\Delta t_n^{2p}} \leq c_3 \parenthesis{\Exp{ \Delta t_n^2 } }^{p}.
\]  

\end{enumerate}
Then, as $\check{N}$ increases, 
\begin{equation}\label{eq:mseExpansion1DNotComp}
\Exp{\parenthesis{g(X_T) -\gBarXT}^2} = \sum_{n=0}^{\check{N}-1} \Exp{ 
  \phiX{t_n,\barX_{t_n}} \frac{(b_x b)^2}{2}(t_n, \barX_{t_n}) \Delta
  t_n^2  + \littleO{\Delta t_n^2} },
\end{equation}
where we have defined $t_n = T$ and $\Delta t_n=0$ for all $n \in
\{N,N+1, \ldots, \check{N}\}$.  And replacing the first variation,
$\phiX{t_n,\barX_n}$, by the numerical approximation, $\phiBarX{n}$,
as defined in \eqref{eq:backwardScheme1D}, yields the following to
leading order all-terms-computable error expansion:
\begin{equation}\label{eq:mseExpansion1D}
\Exp{\parenthesis{g(X_T) -\gBarXT}^2} = \sum_{n=0}^{\check{N}-1} \Exp{ 
  \phiBarX{n}^2  \frac{(b_x b)^2}{2}(t_n, \barX_{t_n}) \Delta t_n^2  +
\littleO{\Delta t_n^2} }.
\end{equation}

\end{theorem}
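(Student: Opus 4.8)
The plan is to derive the expansion by a telescoping-sum argument on the payoff-of-flow map $\varphi$, tracking the one-step discrepancy between the exact flow and the Euler--Maruyama step, and then squaring. First I would write the global error as a telescoping sum of local errors propagated through the exact flow: since $\varphi(T,\barX_T) = g(\barX_T)$ and $\varphi(0,x_0) = g(X_T)$ (in the sense that $\Exp{\varphi(0,x_0)\mid\FCal_0}$ recovers $\Exp{g(X_T)}$), the difference $g(X_T)-g(\barX_T)$ decomposes as $\sum_{n=0}^{N-1}\bigl(\varphi(t_n,\barX_{t_n}) - \Exp{\varphi(t_{n+1},\barX_{t_{n+1}})\mid \FCal_{t_n}}\bigr)$ plus a martingale remainder, exactly as in the Talay--Tubaro weak-error analysis, but here carried out pathwise rather than in expectation. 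Each summand is a local error term $e_n$; the leading contribution of $e_n$ comes from the mismatch between the true increment of $X$ over $[t_n,t_{n+1}]$ and the frozen-coefficient Euler increment, and a Taylor expansion of $\varphi$ in its spatial argument around $\barX_{t_n}$ together with Itô--Taylor expansions of $a,b$ over the step produces $e_n = \phiX{t_n,\barX_{t_n}}\,\tfrac{(b_xb)}{2}(t_n,\barX_{t_n})\bigl((\Delta W_n)^2-\Delta t_n\bigr) + (\text{lower order})$, the $(b_xb)$ term being the well-known Milstein correction that Euler--Maruyama omits.

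Next I would square the telescoping sum and take expectations. The cross terms $\Exp{e_n e_m}$ for $n\neq m$ vanish to leading order by the martingale/adaptedness structure ((M.1) ensures $t_{n+1}\in\FCal_{t_n}$, so conditioning is legitimate and the leading $(\Delta W_n)^2-\Delta t_n$ factors are conditionally mean-zero and uncorrelated across steps), so only the diagonal terms $\Exp{e_n^2}$ survive. Using $\Exp{((\Delta W_n)^2-\Delta t_n)^2\mid\FCal_{t_n}} = 2\Delta t_n^2$ gives the factor $1/2$ after the $((b_xb)/2)^2 \cdot 2 = (b_xb)^2/2$ arithmetic, yielding $\Exp{e_n^2} = \Exp{\phiX{t_n,\barX_{t_n}}^2 \tfrac{(b_xb)^2}{2}(t_n,\barX_{t_n})\Delta t_n^2} + \littleO{\Exp{\Delta t_n^2}}$, which is \eqref{eq:mseExpansion1DNotComp}. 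The conventions $t_n=T$, $\Delta t_n=0$ for $n\ge N$ let me sum to the deterministic upper bound $\check N$; hypothesis (M.2) controls $N$ versus $\check N$ and the maximal step size, and (M.3) is exactly what is needed to absorb the higher moments of the random steps when bounding remainder terms like $\sum\Exp{\Delta t_n^{2p}}$ by $\sum(\Exp{\Delta t_n^2})^p$ and hence by $\littleO{\sum\Exp{\Delta t_n^2}} = \littleO{1}$.

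The final step is to replace the exact first variation $\phiX{t_n,\barX_{t_n}}$ by its numerical surrogate $\phiBarX{n}$ from the backward scheme \eqref{eq:backwardScheme1D}. This requires a stability/consistency estimate showing $\Exp{(\phiX{t_n,\barX_{t_n}}^2 - \phiBarX{n}^2)(b_xb)^2(t_n,\barX_{t_n})\Delta t_n^2} = \littleO{\Exp{\Delta t_n^2}}$ uniformly in $n$; since the first variation solves a linear (dual) SDE backward in time with coefficients built from $a_x,b_x$, and the discrete scheme is its Euler discretization on the same mesh, the error is $\bigO{\max_n\Delta t_n}$ in an appropriate $L^p$ sense, which combined with $\Delta t_n^2$ and summed over $n$ is lower order. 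The regularity assumptions (R.1)--(R.4) are what make all the Taylor remainders and moment bounds go through: $a,b\in C_b^{2,4}$ gives enough smoothness for the Itô--Taylor expansion of the local error and for $\varphi$ to inherit the spatial $C^3_b$-type bounds via Feynman--Kac-style arguments, (R.3) controls polynomial growth of $g$, and (R.4) gives all moments of the initial data.

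The main obstacle I anticipate is the pathwise (rather than in-expectation) bookkeeping of the remainder terms: in the classical weak-error expansion one discards terms that are mean-zero, but here, because we square before taking expectation, mean-zero-but-random terms can contribute at second order through their variance and through cross-correlations with the leading term, so I must be careful to show that every term beyond the identified $(b_xb)^2/2$ contribution is genuinely $\littleO{\Delta t_n^2}$ after squaring — in particular the interplay between the randomness of the mesh (the $\Delta t_n$ are themselves $\FCal_{t_{n-1}}$-measurable random variables, not constants) and the Wiener increments, which is precisely why hypotheses (M.1)--(M.3) are imposed and where the argument departs most from Talay--Tubaro.
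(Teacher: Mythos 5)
Your overall architecture does match the paper's: telescope $g(X_T)-\gBarXT$ through the flow map, expand each local error by It\^o--Taylor, identify the double Wiener integral $\tildeDb_n=\int_{t_n}^{t_{n+1}}\!\int_{t_n}^t (b_xb)\,dW_s\,dW_t$ (your Milstein-correction term) as the leading contribution, square, keep the diagonal, and finally replace $\phiX{t_n,\barX_{t_n}}$ by $\phiBarX{n}$ via strong convergence of the discrete first variation; the arithmetic producing the factor $(b_xb)^2/2$ is also correct. The genuine gap is at the step you dispatch with ``conditioning is legitimate \dots so only the diagonal terms survive.'' The weight $\phiX{t_{n+1},\cdot}=g'(X_T^{\cdot,t_{n+1}})\,\partial_x X_T^{\cdot,t_{n+1}}$ is a functional of the whole Wiener path on $[t_{n+1},T]$, and through its argument $\barX_{t_{n+1}}+s_n\Delta e_n$ it also depends on $\Delta W_n$ itself; it is therefore not $\FCal_{t_n}$-measurable and cannot be pulled out of an $\FCal_{t_n}$-conditional expectation. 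Hence the cross terms for $k\neq n$ do not vanish by the martingale argument you invoke, and even the diagonal term cannot be evaluated by conditioning on $\FCal_{t_n}$ alone: (M.1) makes $\barX$ adapted, but it does nothing about the anticipating \emph{a posteriori} dual weights. The essential difficulty is this non-adaptedness, not only the randomness of the mesh that you flag at the end.

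The paper's proof spends most of its effort precisely here. It introduces the ``excised'' $\sigma$-algebras $\hatF^{n}$ and $\hatF^{k,n}$, generated by the entire Wiener path except the increment(s) over $[t_n,t_{n+1}]$ (and $[t_k,t_{k+1}]$), and It\^o--Taylor expands $\varphi_x$ about center points that are $\hatF^{k,n}$-measurable --- which is why bounded moments of the second, third and fourth variations of the flow map are required (the appendix lemma on higher variations). Only then can conditional expectation, combined with the orthogonality of multiple It\^o integrals and the moment bounds of Kloeden--Platen, be used to kill the leading cross contributions, to bound the off-diagonal terms by $C\check N^{-3/2}\bigl(\Exp{\Delta t_k^2}\,\Exp{\Delta t_n^2}\bigr)^{1/2}$, and to compute $\Exp{\tildeDb_n^2\mid\hatF^{n}}=(b_xb)^2(t_n,\barX_{t_n})\,\Delta t_n^2/2+\littleO{\Delta t_n^2}$ after first recentring the weight from $\barX_{t_{n+1}}+s_n\Delta e_n$ to the $\hatF^{n}$-measurable point $\barX_{t_n}$ (a step separate from, and prior to, your final replacement of $\varphi_x$ by $\overline\varphi_x$). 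Without this device, or an equivalent mechanism for decoupling the anticipating weights from the Wiener increments, your ``only the diagonal survives'' step is unjustified and the proposed proof does not go through.
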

\begin{remark}
In condition (M.2) of the above theorem we have introduced $\check N$
to denote the deterministic upper bound for the number of time steps
in all mesh realizations. Moreover, from this point on the mesh points
$\{t_n\}_n$ and time steps $\{\Delta t_n\}_n$ are defined for all
indices $\{0,1,\ldots,\check{N}\}$ with the natural extension $t_n =
T$ and $\Delta t_n=0$ for all $n\in \{N+1, \ldots, \check{N}\}$. In
addition to ensuring an upper bound on the complexity of a numerical
realization and that $\max_n \Delta t_n \to 0$ as $\check N \to \infty$,
replacing the random $N$ (the smallest integer value for which $t_N =
T$ in a given mesh) with the deterministic $\check N$ in the MSE error
expansion~\eqref{eq:mseExpansion1D} simplifies our proof of
Theorem~\ref{thm:mseExpansion1D}.
\end{remark}

\begin{remark}
  For most SDE problems on which it is relevant to apply
  \aposteriori adaptive integrators, at least one of the
  regularity conditions (R.1), (R.2), and (R.3) and the mesh
  adaptedness assumption (M.1) in Theorem~\ref{thm:mseExpansion1D} will
  not be fulfilled. In our adaptive algorithm, the error
  expansion~\eqref{eq:mseExpansion1D} is interpreted in a formal sense
  and only used to facilitate the systematic construction of a mesh refinement
  criterion. 
%% Also for our \aposteriori adaptive method, the
%%   assumptions in Theorem~\ref{thm:thm:mseExpansion1D} will in 
%%   many relevant problems not be fulfilled, and in such settings 
%%   we do not meaning that 
%%   strong convergence of 
  
  When applied to low-regularity SDE problems where some of the
  conditions (R.1), (R.2), or (R.3), do not hold, the actual
  leading-order term of the error expansion~\eqref{eq:mseExpansion1D}
  may contain other or additional terms besides $\phiBarX{n}^2
  \frac{(b_x b)^2}{2}(t_n, \barX_{t_n}) $ in the error density.
  Example~\ref{ex:mlmcSingularityGbm} presents a problem where
  \emph{ad hoc} additional terms are added to the error density.
\end{remark}

\subsubsection{Numerical Approximation of the First Variation}\label{subsec:firstVar}

The first variation of the flow map, $\varphi(t, x)$, is defined by
\[
\phiX{t,x} = \partial_x g(X^{x,t}_t) = g'(X_T^{x,t}) \partial_x X_T^{x,t}
\]
and the first variation of the path itself, $\partial_x X_s^{x,t}$, 
is the solution of the linear SDE
\begin{equation}\label{eq:sdeFlowFirstVar}
\begin{split}
d(\partial_x X_s^{x,t}) &= a_x(s,X_s^{x,t}) \partial_x X_s^{x,t} ds + b_x(s,X_s^{x,t}) \partial_x X_s^{x,t} dW_s,\quad s \in (t,T], \\
\partial_x X_t^{x,t} & =1.
\end{split}
\end{equation}
To describe conditions under which the terms $g'(X_s^{x,t})$ and 
$\partial_x X_s^{x,t}$ are well defined, let us first recall that if 
$X_s^{x,t}$ solves the SDE~\eqref{eq:sdeFlow} and
\[
\Exp{\int_t^T |X_s^{x,t}|^{2} ds } < \infty,
\]
then we say that there exists a solution to the SDE;
and if $\widetilde{X}^{x,t}_s$ is another solution 
of the SDE with the same initial condition, then 
we say the solution is pathwise unique provided that
\[
\Prob{\sup_{s \in [t, T]} \abs{X_s^{x,t} - \widetilde{X}^{x,t}_s} > 0} =0.
\] 

\begin{lemma}\label{lem:existUniquePath}
Assume the regularity assumptions (R.1), (R.2), (R.3), and (R.4) in
Theorem~\ref{thm:mseExpansion1D} hold, and that for any fixed $t \in [0,T]$,
$x \in \FCal_t$ and $\Exp{|x|^{2p}}<\infty$, for all $p\in \N$. Then
there exist pathwise unique solutions $X_s^{x,t}$ and $\partial_x
X_s^{x,t}$ to the respective SDE~\eqref{eq:sdeFlow}
and~\eqref{eq:sdeFlowFirstVar} for which
\[
\max\left\{ \Exp{\sup_{s \in [t, T]} \abs{X^{x,t}_s}^{2p}}, \Exp{\sup_{s \in [t, T]} \abs{\partial_x X^{x,t}_s}^{2p}} \right\}  
< \infty, \quad \forall p \in \N.
\]
Furthermore, $\phiX{t,x} \in \FCal_T$ and 
\[
\Exp{ |\phiX{t,x}|^{2p} } < \infty, \quad \forall p \in \N.
\]
\end{lemma}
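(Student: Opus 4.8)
The plan is to establish the three assertions in order: (i) existence and pathwise uniqueness of $X_s^{x,t}$ with all moments of its supremum finite; (ii) the same for the first variation $\partial_x X_s^{x,t}$; and (iii) measurability of $\phiX{t,x}$ together with its moment bound. For (i), the key point is that assumptions (R.1)--(R.2) give globally Lipschitz and linearly growing coefficients $a,b$ in the spatial variable, uniformly in $t$; this is precisely the classical setting for strong existence and uniqueness of It\^o SDEs. I would invoke the standard existence/uniqueness theorem (e.g. from Kloeden--Platen or Karatzas--Shreve) to get a pathwise unique adapted solution, and then apply the standard moment estimate: for each $p$, using the SDE, H\"older's inequality, the Burkholder--Davis--Gundy inequality for the stochastic integral, the linear growth bound (R.2), and Gronwall's lemma, one obtains $\Exp{\sup_{s\in[t,T]}|X_s^{x,t}|^{2p}} \le C_p(1+\Exp{|x|^{2p}})$, which is finite by the hypothesis $\Exp{|x|^{2p}}<\infty$. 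Since $x\in\FCal_t$, the usual construction (Picard iteration starting from the constant $x$) keeps every iterate, hence the limit, adapted to $(\FCal_s)_{s\ge t}$.

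For (ii), I observe that equation \eqref{eq:sdeFlowFirstVar} is a \emph{linear} SDE in the unknown $\partial_x X_s^{x,t}$ with (random, adapted) coefficients $a_x(s,X_s^{x,t})$ and $b_x(s,X_s^{x,t})$. By (R.1), $a_x$ and $b_x$ are bounded (they lie in $C_b$), so these coefficients are uniformly bounded processes; linear SDEs with bounded coefficients have a pathwise unique solution given explicitly by a stochastic exponential, namely
\[
\partial_x X_s^{x,t} = \exp\!\left( \int_t^s \Big( a_x(u,X_u^{x,t}) - \tfrac12 b_x(u,X_u^{x,t})^2 \Big)\,du + \int_t^s b_x(u,X_u^{x,t})\,dW_u \right).
\]
Because the integrands are bounded by a deterministic constant, a Novikov-type argument (or a direct BDG + Gronwall estimate applied to the linear SDE, as in part (i)) shows that all moments of $\sup_{s\in[t,T]}|\partial_x X_s^{x,t}|$ are finite, and in fact bounded by a deterministic constant depending only on $T$ and the sup-norms of $a_x,b_x$. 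Adaptedness is again inherited from the construction.

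For (iii), I write $\phiX{t,x} = g'(X_T^{x,t})\,\partial_x X_T^{x,t}$. Both factors are $\FCal_T$-measurable by (i) and (ii), so $\phiX{t,x}\in\FCal_T$. For the moment bound, apply the Cauchy--Schwarz inequality: $\Exp{|\phiX{t,x}|^{2p}} \le \Big(\Exp{|g'(X_T^{x,t})|^{4p}}\Big)^{1/2}\Big(\Exp{|\partial_x X_T^{x,t}|^{4p}}\Big)^{1/2}$. The second factor is finite by (ii). For the first, the polynomial growth bound on $g'$ from (R.3) gives $|g'(X_T^{x,t})|^{4p} \le C(1+|X_T^{x,t}|^{4pk})$, whose expectation is finite by the moment bound in (i). Hence $\Exp{|\phiX{t,x}|^{2p}}<\infty$ for every $p\in\N$. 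The main obstacle is largely bookkeeping: one must carefully track that all the constants are uniform in the relevant parameters and that the starting point $x$, being merely $\FCal_t$-measurable rather than deterministic, does not break the Picard construction or the Gronwall estimates — this is standard but needs a remark, and it is the only place where assumption (R.4)/the moment hypothesis on $x$ is genuinely used.
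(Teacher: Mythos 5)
Your proposal is correct, and its overall skeleton (existence, uniqueness and sup-moment bounds for $X^{x,t}$ and $\partial_x X^{x,t}$, then measurability of $\phiX{t,x}$ plus Cauchy--Schwarz/H\"older with the polynomial growth of $g'$ from (R.3)) matches the paper's. The middle step, however, is handled differently: the paper stacks $(X^{x,t},\partial_x X^{x,t})$ into a single coupled SDE system and disposes of existence, pathwise uniqueness and all moment bounds in one stroke by citing the standard Kloeden--Platen results (Theorems 4.5.3, 4.5.4 and Exercise 4.5.5), whereas you first solve for $X^{x,t}$ and then treat \eqref{eq:sdeFlowFirstVar} as a scalar linear SDE with bounded adapted coefficients, writing $\partial_x X_s^{x,t}$ explicitly as a stochastic exponential and getting moments from the boundedness of $a_x,b_x$ via exponential-martingale/BDG estimates. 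Your sequential route has the merit of sidestepping a point the paper glosses over: the coupled system's coefficients contain products such as $a_x(s,Y^{(1)}_s)Y^{(2)}_s$, which are not globally Lipschitz jointly in $(Y^{(1)},Y^{(2)})$, so the blanket appeal to ``Lipschitz continuity and linear growth'' of the system needs the same kind of care that the paper only supplies later, in Lemma~\ref{lem:existenceUniquenessHigherVariations}; your explicit solution makes the moment bound for the first variation elementary and even uniform in $x$. The price is that the stochastic-exponential formula is special to the scalar linear equation, so the paper's system viewpoint extends more readily to the multi-dimensional setting and to the second-through-fourth variations used elsewhere in the appendix, while your argument would have to be redone (e.g.\ by BDG plus Gr\"onwall on the linear matrix equation, as you indeed mention in passing). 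Your closing remark that the only role of the moment hypothesis on $x$ is to feed the Gr\"onwall/moment estimates, with adaptedness preserved through the Picard construction, is consistent with how the paper uses (R.4).
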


\begin{proof}
By writing $(Y_s^{(1)}, Y^{(2)}_s ):= (X^{x,t}_s, \partial_x X^{x,t}_s)$, 
 \eqref{eq:sdeFlow} and~\eqref{eq:sdeFlowFirstVar} together form 
a system of SDE:
\begin{equation}\label{eq:sdeFlowSystem2}
\begin{split}
dY_s^{(1)} & = a(s, \yy 1 s ) ds + b(s, \yy 1 s ) dW_s \\
dY_s^{(2)} & = a_x(s, \yy 1 s) \yy 2 s ds + b_x(s, \yy 1 s)\yy 2 s  dW_s \\
\end{split}
\end{equation}  
for $s \in (t, T]$ and with initial condition $Y_t = (x, 1)$. 
By the Lipschitz continuity and the linear growth bound of this system's 
drift and diffusion coefficients, 
there exists a pathwise unique solution of the SDE~\eqref{eq:sdeFlowSystem2} for which
\[
 \Exp{\sup_{s \in [t, T]}|Y_s|^{2p}} < \infty, \quad \forall p \in \N,
\]
(cf.~\cite[Theorems 4.5.3 and 4.5.4 and Exercise 4.5.5]{Kloeden92}). 
As solutions of the \Ito SDE, $X^{x,t}_T, \partial_x X^{x,t}_T \in \FCal_T$, and 
since we assume that $g' \in C_b^3(\R)$, we know that $g'$ is Borel measurable and so is 
the mapping $f:\R^2 \to \R$ defined by $f(x,y) = xy$.
From this we conclude that $\phiX{x,t} = f(g'(X^{x,t}_T), \partial_x X^{x,t}_T) \in \FCal_T$ and,
by~\eqref{eq:growthG}, H\"older's and Minkowski's inequalities, that for any $p \in \N$, 
\[
\Exp{ |\phiX{t,x}|^{2p} } \leq 
C\sqrt{\Exp{ \parenthesis{1+\abs{X_T^{x,t}}^{k}}^{4p} }\Exp{ \abs{\partial_x X_T^{x,t}}^{4p} }} <  \infty. 
\]
\end{proof}

%% \begin{equation}\label{eq:phiXSde}
%% \begin{split}
%%  -d\phiX{X_s^{x,t},s} &= \left(a_x(X_s^{x,t},s) ds +  b_x(X_s^{x,t},s) dW_s \right) \phiX{X_s^{x,t},s}  , \quad t< s < T\\
%%  \phiX{X_T^{x,t},T} &= g'\parenthesis{X_T^{x,t}},
%%  \end{split}
%% \end{equation}
%% cf.~\cite{Szepessy01}.
To obtain an all-terms-computable error expansion in
Theorem~\ref{thm:mseExpansion1D}, which will be needed to construct
an \aposteriori adaptive algorithm, the first variation of the flow map,
$\varphi_x$, is approximated by the first
variation of the Euler--Maruyama numerical solution,
\[
\phiBarX{n} :=  g'( \barX_T) \partial_{\barX_{t_n}} \barX_T.
\]
Here, for $k>n$, $\partial_{\barX_{t_n}} \barX_{t_k}$ is the
solution of the Euler--Maruyama scheme
\begin{equation}\label{eq:firstVarNum}
(\partial_{\barX_{t_n}} \barX)_{t_{j+1}} 
=  (\partial_{\barX_{t_n}} \barX)_{t_{j}} +  a_x(t_j, \barX_{t_j}) (\partial_{\barX_{t_n}} \barX)_{t_{j}} \Delta t_j 
+ b_x(t_j, \barX_{t_j}) (\partial_{\barX_{t_n}} \barX)_{t_{j}} \Delta W_j, 
\end{equation}
for $j =n,n+1, \ldots k-1$ and with the initial condition $\partial_{\barX_{t_n}} \barX_{t_n} = 1$,
which is coupled to the numerical solution of the SDE, $\barX_{t_j}$.
\begin{lemma}\label{lem:numSolConv}
If the assumptions (R.1), (R.2), (R.3), (R.4), (M.1) and (M.2) in 
Theorem~\ref{thm:mseExpansion1D} hold,
then the numerical solution $\barX$ of~\eqref{eq:eulerMaruyama} 
converges in mean square sense to the solution of the SDE~\eqref{eq:sdeProblem}, 
\begin{equation}\label{eq:strongConvPath}
\max_{1\leq n \leq \check{N} } \parenthesis{\Exp{ \abs{\barX_{t_n} - X_{t_n}}^{2p}}}^{1/2p} \leq 
C \check{N}^{-1/2},
\end{equation}
and
\begin{equation}\label{eq:boundNumSol}
\max_{1\leq n \leq \check{N} } \Exp{ \abs{\barX_{t_n}}^{2p} } < \infty, \quad \forall p \in \N.
\end{equation}
For any fixed $1\leq n \leq \check{N} $, 
the numerical solution $\partial_{\barX_{t_n}} \barX$ of~\eqref{eq:firstVarNum} 
converges in mean square sense to $\partial_{x} X^{X_{t_m}, t_m}$,
\begin{equation}\label{eq:strongConvFirstVar}
\max_{n \leq k \leq \check{N}  } \parenthesis{\Exp{ \abs{\partial_{\barX_{t_n}} \barX_{t_k} - \partial_{x} X_{t_k}^{X_{t_n}, t_n}}^{2p}}}^{1/2p} \leq 
C \check N^{-1/2}.
\end{equation} 
and 
\begin{equation}\label{eq:boundNumSolFirstVar}
\max_{n \leq k \leq \check{N}} \Exp{ \abs{\partial_{\barX_{t_n}} \barX_{t_k}}^{2p} } < \infty, \quad \forall p \in \N.
\end{equation}
Furthermore, $\phiBarX{n} \in \FCal_T$ and 
\begin{equation}\label{eq:phiBarBound}
\Exp{ |\phiBarX{n}|^{2p} } < \infty, \quad \forall p \in \N.
\end{equation}
\end{lemma}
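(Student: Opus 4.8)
The plan is to adapt the classical strong-convergence analysis of the Euler--Maruyama scheme (as in~\cite{Kloeden92}) to the random, adapted meshes permitted by (M.1)--(M.2). The only structural inputs required are: $a,b\in C_b^{2,4}$, which supplies bounded, hence globally Lipschitz, $x$-derivatives for $a,b$ and for $a_x,b_x$; the linear growth bound (R.2); and the moment assumptions (R.3)--(R.4). The single place where the textbook proof uses determinism of the partition is in treating $\sum_j b(t_j,\barX_{t_j})\,\Delta W_j$ as an $\{\FCal_{t_j}\}$-martingale with $\Exp{|\Delta W_j|^{2p}\mid\FCal_{t_j}}=C_p\,\Delta t_j^{\,p}$. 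This still holds here: by (M.1) each $t_{j+1}$ is a stopping time that is $\FCal_{t_j}$-measurable, so $\Delta t_j\in\FCal_{t_j}$, and by the strong Markov property $W_{t_{j+1}}-W_{t_j}$ conditioned on $\FCal_{t_j}$ is $N(0,\Delta t_j I_{d_2})$. The deterministic cap $N\le\check N$ and $\max_n\Delta t_n<c_2\check N^{-1}$ from (M.2) then guarantee that every Gronwall/BDG iteration below closes with constants that do not depend on $\check N$ or on the number of steps realised along a given path.

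First I would establish the moment bound~\eqref{eq:boundNumSol}: applying the Burkholder--Davis--Gundy and discrete H\"older inequalities to~\eqref{eq:eulerMaruyama}, using linear growth (R.2) and $\Exp{|X_0|^{2p}}<\infty$ (R.4), and closing with the discrete Gronwall lemma, yields $\max_{1\le n\le\check N}\Exp{|\barX_{t_n}|^{2p}}\le C_p$ uniformly in $\check N$; this bound is needed later because $a(t_j,\barX_{t_j})$ is only linearly bounded, not bounded. For~\eqref{eq:strongConvPath} I would subtract the identities for $X_{t_{n+1}}-X_{t_n}$ and $\barX_{t_{n+1}}-\barX_{t_n}$, take $2p$-th moments conditionally on $\FCal_{t_n}$, and split the diffusion perturbation as $b(s,X_s)-b(t_n,\barX_{t_n})=[b(s,X_s)-b(t_n,X_{t_n})]+[b(t_n,X_{t_n})-b(t_n,\barX_{t_n})]$: the first bracket is controlled by the $1/2$-H\"older regularity of $X$ on $[t_n,t_{n+1}]$ (a local contribution of order $\Delta t_n^{\,p+1}$ after BDG), the second by the Lipschitz property times the current error; summing the $\le\check N$ local contributions and applying discrete Gronwall gives the global rate $\check N^{-1/2}$. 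Note that (M.3) is \emph{not} used here — the worst-case bound $\max_n\Delta t_n\le c_2\check N^{-1}$ already suffices.

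Next, for~\eqref{eq:strongConvFirstVar}--\eqref{eq:boundNumSolFirstVar} I would analyse the coupled pair $(\barX,\partial_{\barX_{t_n}}\barX)$ against $(X^{X_{t_n},t_n},\partial_x X^{X_{t_n},t_n})$, exploiting that~\eqref{eq:sdeFlowFirstVar} is \emph{linear} in the first variation with bounded, Lipschitz coefficients $a_x,b_x$ (so~\eqref{eq:boundNumSolFirstVar} follows from a direct Gronwall estimate of~\eqref{eq:firstVarNum}, using $\max_n\Exp{|\barX_{t_n}|^{2p}}<\infty$ only through the boundedness of $a_x,b_x$). The error $\partial_{\barX_{t_n}}\barX_{t_k}-\partial_x X^{X_{t_n},t_n}_{t_k}$ solves a perturbed linear recursion whose inhomogeneity is $(a_x(t_j,\barX_{t_j})-a_x(s,X_s))\,\partial_x X_s$ together with the analogous $b_x$ term; its mean-square size is $\bigO{\check N^{-1}}$ by~\eqref{eq:strongConvPath}, the Lipschitz continuity and boundedness of $a_x,b_x$, the path regularity of $X$, and the finite moments of $\partial_x X$ from Lemma~\ref{lem:existUniquePath}, so one more Gronwall step delivers the rate $\check N^{-1/2}$. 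Finally, $\phiBarX{n}\in\FCal_T$ and~\eqref{eq:phiBarBound} follow exactly as in Lemma~\ref{lem:existUniquePath}: $\barX_T$ and $\partial_{\barX_{t_n}}\barX_T$ are $\FCal_T$-measurable, being built from $X_0\in\FCal_0$, the stopping times $t_j\in\FCal_{t_{j-1}}\subset\FCal_T$ and the increments $\Delta W_j\in\FCal_T$; $g'$ is continuous hence Borel; and $|\phiBarX{n}|\le C(1+|\barX_T|^{k})\,|\partial_{\barX_{t_n}}\barX_T|$ by (R.3), so H\"older and Minkowski combined with~\eqref{eq:boundNumSol} and~\eqref{eq:boundNumSolFirstVar} bound all moments.

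The main obstacle is the careful bookkeeping for the random mesh: one must verify that the discrete processes, the ``last mesh point before $t$'' map, and all partial sums are genuinely adapted, that $W_{t_{j+1}}-W_{t_j}$ really is conditionally $N(0,\Delta t_j I_{d_2})$ given $\FCal_{t_j}$ under (M.1), and — most delicately — that the multiplicative constants produced by the repeated Gronwall/BDG steps depend only on $T$, the coefficient bounds, $c_1,c_2$ and $p$, never on the (random) number of steps along a path. Once the deterministic cap $\check N$ is used at exactly those points where the classical argument invokes determinism of the partition, the remainder is a routine if lengthy combination of BDG, H\"older/Minkowski, Lipschitz estimates and discrete Gronwall.
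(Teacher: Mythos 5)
Your proposal is correct, but it takes a more self-contained route than the paper. The paper's own proof is essentially a citation: it couples the path and its first variation into the single system $\overline Y_{t_k}=(\barX_{t_k},\partial_{\barX_{t_n}}\barX_{t_k})$ generated by the Euler--Maruyama scheme for the SDE~\eqref{eq:sdeFlowSystem2}, invokes Kloeden and Platen's Theorem 10.6.3 (and the remark following it) to get the strong convergence rate $\check{N}^{-1/2}$ and the moment bounds~\eqref{eq:strongConvPath}--\eqref{eq:boundNumSolFirstVar} in one stroke, and then disposes of $\phiBarX{n}\in\FCal_T$ and~\eqref{eq:phiBarBound} by the same H\"older/Minkowski argument as in Lemma~\ref{lem:existUniquePath} --- exactly as you do in your final step. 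You instead re-derive the classical estimate: BDG plus discrete Gronwall for the moment bound~\eqref{eq:boundNumSol}, a drift/diffusion splitting with the $1/2$-H\"older regularity of $X$ for~\eqref{eq:strongConvPath}, and then the first variation treated as a perturbed \emph{linear} recursion with bounded coefficients $a_x,b_x$, fed by the already-established path error. This triangular treatment is a legitimate alternative to the paper's one-shot application of the convergence theorem to the coupled system (and it sidesteps the fact that the coupled drift and diffusion are not jointly globally Lipschitz, which the paper covers only via the cited remark). What your version buys is an explicit verification of the points the citation glosses over for \emph{random} adapted meshes: that (M.1) makes $\Delta t_j$ $\FCal_{t_j}$-measurable with $W_{t_{j+1}}-W_{t_j}$ conditionally $N(0,\Delta t_j I_{d_2})$ by the strong Markov property, and that the deterministic cap $\max_n\Delta t_n<c_2\check{N}^{-1}$ from (M.2) keeps all Gronwall/BDG constants independent of the realized number of steps; you also correctly observe that (M.3) is not needed here. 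What the paper's approach buys is brevity, at the price of leaving the adaptation of a deterministic-mesh theorem to the stopping-time mesh implicit.
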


\begin{proof}
The system $\overline Y_{t_{k}} := (\barX_{t_k}, \partial_{\barX_{t_n}} \barX_{t_k} )$
provides solutions approximating the SDE~\eqref{eq:sdeFlowSystem2} that
are generated by the Euler--Maruyama scheme  
\begin{equation}\label{eq:sdeNumFlowSystem2}
\begin{split}
Y_{t_{k+1}}^{(1)} & = Y_{t_{k}}^{(1)} + a(t_k, Y_{t_k}^{(1)})   \Delta t_k            + b(t_k, Y_{t_k}^{(1)}) \Delta W_k, \\
Y_{t_{k+1}}^{(2)} & = Y_{t_{k}}^{(2)} + a_x(t_k, Y_{t_k}^{(1)})Y^{(2)}_{t_k} \Delta t_k  + b_x(t_k, Y_{t_k}^{(1)})Y^{(2)}_{t_k} \Delta W_k ,
\end{split}
\end{equation}
for $k \ge n$ and with initial condition $\overline Y_{t_{n}} = (\barX_{t_n},  1)$. 
The assumptions of~\cite[Theorem 10.6.3 and the remark following it]{Kloeden92}
are fulfilled for the SDE~\eqref{eq:sdeFlowSystem2}, 
which implies the strong convergence of $\overline Y$ to $Y$ and 
that the inequalities~\eqref{eq:strongConvPath},~\eqref{eq:boundNumSol},~\eqref{eq:strongConvFirstVar}, 
and~\eqref{eq:boundNumSolFirstVar} hold.
That $\phiBarX{n} \in \FCal_T$ and that inequality~\eqref{eq:phiBarBound} holds 
can be shown by a similar argument as in the proof of Lemma~\ref{lem:existUniquePath}.
\end{proof}

From the SDE~\eqref{eq:sdeNumFlowSystem2}, it is clear that
\[
\partial_{\barX_n} \barX_T = \prod_{k=n}^{N-1} \parenthesis{1+ a_x(t_k, \barX_{t_k}) \Delta t_k + b_x(t_k, \barX_{t_k}) \Delta W_k},
\]
and this implies that $\phiBarX{n}$ solves the backward scheme
\begin{equation}\label{eq:backwardScheme1D}
\phiBarX{n} =   c_x(t_n,\barX_{t_n})\phiBarX{n+1}, \quad n = N-1, N-2, \ldots, 0,
\end{equation}
with the initial condition $\phiBarX{N} = g'(\barX_T)$ and the shorthand notation
\begin{equation*}\label{eq:cDefinition}
c(t_n, \barX_{t_n}) := \barX_{t_n} + a(t_n, \barX_{t_n}) \Delta t_n +  b(t_n, \barX_{t_n}) \Delta W_n.
\end{equation*}
The backward scheme~\eqref{eq:backwardScheme1D} is convenient from a computational perspective since
it implies that the set of points, $\{\phiBarX{n}\}_{n=0}^{N}$, can be computed at the same
cost as that of one-path realization, $\{ \barX_{t_n} \}_{n=0}^N$, which
can be verified as follows 
\[
\begin{split}
\phiBarX{n} &= g'(\barX_T) \prod_{k=n}^{N-1} c_x(t_k,\barX_{t_k}) \\
& = c_x(t_n,\barX_{t_n})  g'(\barX_T) \prod_{k=n+1}^{N-1} c_x(t_k,\barX_{t_k}) \\
& = c_x(t_n,\barX_{t_n})  g'(\barX_T)  \partial_{t_{n+1}} \barX_T\\
&=  c_x(t_n,\barX_{t_n}) \phiBarX{n+1}.
\end{split}
\]

%% This follows from the following recursive relation
%% \[
%% \begin{split}
%% \frac{\partial}{\partial \barX_{n} } g( \barX_T^{\barX_{t_n},t_n}  )
%%  & =  \frac{\partial}{\partial \barX_{n+1} } \g{ \barX_T^{\barX_{n+1}, t_{n+1} } }  \frac{\partial \barX_{n+1} }{\partial \barX_{n} }\\
%% & =  \phiBarX{n+1} \cdot \frac{\partial( \barX_{t_n} + a(\barX_{t_n},t_n) \Delta t_n + b(\barX_{t_n},t_n) \Delta W_n )}{\partial \barX_{n} }\\
%% & =  \phiBarX{n+1} \cdot \left(1 + a_x(\barX_{t_n},t_n) \Delta t_n + b_x(\barX_{t_n},t_n) \Delta W_n\right).
%% \end{split}
%% \]

\subsection{The Adaptive Algorithm}\label{subsec:adaptiveAlgorithm}
% We now describe our adaptive mesh refinement algorithm
% that builds upon the MSE error {expansion} in Theorem~\ref{thm:mseExpansion1D}.
% The presented algorithm is path dependent, meaning that 
% a refined mesh outcome $\Delta t(\omega)$ is a random vector 
% of increments depending on the SDE realization $\barX_t(\omega)$,
% cf.~Figure{} \textcolor{red}{ TODO add figure with mesh realizations}. 
 
Having derived computable expressions for all terms in the error
expansion, we next introduce the error density
\begin{equation}\label{eq:rhoBar}
\rhoBar_n := \phiBarX{n}^2 \frac{(b_x b)^2}{2}(t_n, \barX_{t_n}), \quad n =0,1,\ldots, N-1,
\end{equation}
and, for representing the numerical solution's error contribution from
the time interval $(t_{n}, t_{n+1})$, the error indicators
\begin{equation}\label{eq:rBar}
\rBar_n  := \rhoBar_n \Delta t_n^2, \quad n=0,1,\ldots, N-1.
\end{equation}
The error expansion~\eqref{eq:mseExpansion1D} may then be written as
\begin{equation}\label{eq:errorIndExpansion}
\Exp{\parenthesis{g(X_T) -\gBarXT}^2} = \sum_{n=0}^{\check{N}-1} \Exp{    \rBar_n + \littleO{\Delta t_n^2} }.
\end{equation}
The final goal of the adaptive algorithm is minimization of the
leading order of the MSE in~\eqref{eq:errorIndExpansion}, namely, $\Exp{ \sum_{n=0}^{N-1} \rBar_n  }$,
which (for each realization) is approached by minimization of the
error expansion realization $\sum_{n=0}^{N-1} \rBar_n$.
An approximately optimal choice for the refinement procedure can be
derived by introducing the Lagrangian
\begin{equation}\label{eq:timeStepLagrangian}
\mathcal{L}(\Delta t, \lambda) = \int_0^T \rho(s) \Delta t(s) ds
+ \lambda ( \int_0^T \frac{1}{\Delta t(s)} ds  - \check{N} ),
\end{equation}
for which we seek to minimize the pathwise squared error   
\[
\parenthesis{g(X_T) -\gBarXT}^2 = \int_0^T \rho(s) \Delta t(s) ds
\]
under the constraint that 
\begin{equation*}\label{eq:mseConstraintLagrangian}
\int_0^T \frac{1}{\Delta t(s)} ds = \check{N},
\end{equation*}
for a fixed number of time steps, $\check{N}$, and the 
implicit constraint that the error indicators are equilibrated,
\begin{equation}\label{eq:errIndEquilibration}
 \rBar_n = \rhoBar_n \Delta t_n^2 = \frac{\parenthesis{g(X_T) -\gBarXT}^2}{\check{N}}, \quad n=0,1,\ldots, \check{N}-1.
\end{equation}

Minimizing~\eqref{eq:timeStepLagrangian} yields
\begin{equation}\label{eq:solTimeStepLagrangian}
\Delta t_n = \sqrt{\frac{\parenthesis{g(X_T) -\gBarXT}^2}{\check{N} \, \rho(t_n) } } \quad \text{and} \quad 
\text{MSE}_{\text{adaptive}}  \leq \frac{1}{\check{N}}\Exp{ \left( \int_{0}^T \sqrt{\rho(s)} \, ds  \right)^2 },
\end{equation}
where the above inequality follows from using H\"older's inequality,
\[
\begin{split}
\Exp{ \parenthesis{g(X_T) -\gBarXT}^2 }  & = \frac{1}{\sqrt{\check{N}}}  \Exp{  \abs{ g(X_T) -\gBarXT} \int_{0}^T \sqrt{\rho(s)} \, ds }  \\
& \leq \frac{1}{\sqrt{\check{N}}}  \sqrt{ \Exp{  \abs{g(X_T) -\gBarXT}^2} } \sqrt{ \Exp{ \parenthesis{\int_{0}^T \sqrt{\rho(s)} \, ds }^2} }.
\end{split}
\]
In comparison, we notice that if a uniform mesh is used, the MSE
becomes
\begin{equation}\label{eq:nUniformBound}
\text{MSE}_{\text{uniform}}  = \frac{T}{\check{N}} \Exp{ \int_{0}^T \rho(s) \, ds }.
\end{equation}
A consequence of observations~\eqref{eq:solTimeStepLagrangian}
and~\eqref{eq:nUniformBound} is that for many low-regularity problems,
for instance, if $\rho(s) = s^{-p}$ with $p \in [1,2)$, adaptive
  time-stepping Euler--Maruyama methods may produce more accurate solutions
  (measured in the MSE) than are obtained using the uniform
  time-stepping Euler--Maruyama method under the same computational
  budget constraints.

\subsubsection{Mesh Refinement Strategy}\label{subsec:meshRefinement}
%The optimal time-stepping~\eqref{eq:solTimeStepLagrangian} leads to
%equilibrated error indicators:
%\[
%\rBar_n = \rhoBar_n \Delta t_n^2 = \frac{\text{MSE}}{N}, \quad n=0,1,\ldots, N-1.
%\]
To equilibrate the error indicators~\eqref{eq:errIndEquilibration}, 
we propose an iterative mesh refinement strategy to
identify the largest error indicator and then refining the
corresponding time step by halving it.
% By doing so the error indicators are equilibrated, i.e.,
% \[
% \rBar_0(\omega) \approx \rBar_1(\omega)  \approx \ldots \approx \rBar_{N-1}(\omega),
% \]
% and the magnitude of the error expansion {realization} $\sum_{n=0}^{N-1} \rBar_n(\omega)$
% is reduced in an efficient manner. 
To compute the error indicators prior to refinement, the
algorithm first computes the numerical SDE solution, $\barX_{t_n}$, and the
corresponding first variation $\phiBarX{n}$ (using equations
\eqref{eq:eulerMaruyama} and \eqref{eq:backwardScheme1D} respectively)
on the initial mesh, $\Dt 0$. Thereafter, the error indicators $\rBar_n$
are computed by Equation~\eqref{eq:rBar} and the mesh is refined 
a prescribed number of times, $\nRefine$, as follows:
\begin{enumerate}

\item[(C.1)] Find the largest error indicator 
\begin{equation}\label{eq:errorIndMax}
n^* := \arg \max_n \rBar_n, 
\end{equation}
and refine the corresponding time step by halving
\begin{equation}\label{eq:intervalHalving}
(t_{n^*}, t_{n^*+1})  \to \Big( t_{n^*}, \underbrace{\frac{t_{n^*} + t_{n^*+1}}{2}}_{=t_{n^*+1}^{new}} , \underbrace{t_{n^*+1}}_{=t_{n^*+2}^{new}} \Big), 
\end{equation}
and increment the number of refinements by one.

\item[(C.2)] Update the values of the error indicators, either by
  recomputing the whole problem or locally by interpolation, cf.~Section~\ref{subsubsec:errorIndicators}.

\item[(C.3)] Go to step (C.4) if $\nRefine$ 
  mesh refinements have been made; otherwise, return to step (C.1).

\item[(C.4)] (Postconditioning) Do a last sweep over the mesh
  and refine by halving every time step that is strictly
  larger than $\DtMax$, where $\DtMax = \bigO{\check{N}^{-1} }$
  denotes the maximum allowed step size.

\end{enumerate}
The postconditioning step (C.4) ensures that all time steps become
infinitesimally small as the number of time steps $N \to \infty$
with such a rate of decay that condition (M.2) in Theorem~\ref{thm:mseExpansion1D}
holds and is thereby one of the necessary conditions from
Lemma~\ref{lem:numSolConv} to ensure strong convergence for the
numerical solutions of the MSE adaptive Euler--Maruyama algorithm.
However, the strong convergence result should primarily be interpreted as a motivation for
introducing the postconditioning step (C.4) since Theorem~\ref{thm:mseExpansion1D}'s
assumption (M.1), namely that the mesh points are stopping times for which $t_{n}
\in \FCal_{t_{n-1}}$, will not hold in general for our adaptive algorithm.

\subsubsection{Wiener Path Refinements}\label{subsec:incrementRefinement}

When a time step is refined, as described in~\eqref{eq:intervalHalving}, the
Wiener path must be refined correspondingly.
% A timestep $\Delta t_n$ is refined by halving so that in the refined
% mesh $\Delta t_n$ is replaced by two timesteps $\Delta t_{n^*}^{new}
% = \Delta t_n/2$ and $\Delta t_{n^*+1}^{new}= \Delta t_{n}/2$, where
% $n^*$ is the natural number such that $t_{n^*}^{new} = t_n$ that
% will depend on the number of preceding mesh refinements during the
% last refinement iteration .
The value of the Wiener path at the midpoint between $W_{t_{n^*}}$ and $W_{t_{n^*+1}}$ 
can be generated by Brownian bridge interpolation,
\begin{equation}\label{eq:brownianBridge}
W_{t_{n^*+1}^{new}} = \frac{W_{t_{n^*}} +W_{t_{n^*+1}}}{2} + \xi \frac{\sqrt{\Delta t_{n^*}}}{2},
\end{equation}
where $\xi \sim N(0,1)$, cf.~\cite{Oksendal98}. See
Figure~\ref{fig:Brownian_Bridge} for an illustration of Brownian
bridge interpolation applied to numerical solutions of an
Ornstein-Uhlenbeck SDE.

\subsubsection{Updating the Error Indicators}\label{subsubsec:errorIndicators}
After the refinement of an interval, $(t_{n^*}, t_{n^*+1})$, and its
Wiener path, error indicators must also be updated before moving
on to determine which interval is next in line for refinement. There are
different ways of updating error indicators. One expensive but more
accurate option is to recompute the error indicators completely by
first solving the forward problem~\eqref{eq:eulerMaruyama} and the
backward problem~\eqref{eq:backwardScheme1D}. A less costly but also
less accurate alternative is to update only the error indicators
locally at the refined time step by one forward and
backward numerical solution step, respectively:
\begin{equation}\label{eq:errorIndInterpolation}
\begin{split}
 \barX_{t_{n^*+1}}^{new} &= \barX_{t_{n^*}} + a(t_{n^*}, \barX_{t_{n^*}}) \Delta t_{n^*}^{new} 
 + b(t_{n^*},\barX_{t_{n^*}}) \Delta W_{n^*}^{new},\\
 \phiBarX{n^*+1}^{new} &= c_x(t_{n^*}^{new},\barX_{t_{n^*}^{new}})\phiBarX{n^*+1}.
 \end{split}
\end{equation}
Thereafter, we compute the resulting error density, $\rhoBar_{n*+1}^{new}$,
by Equation~\eqref{eq:rhoBar}, and finally update the error locally
by 
\begin{equation}\label{eq:updateErrorInds}
\rBar_{n^*} = \rhoBar_{n^*} \left( \Delta t_{n^*}^{new}\right)^2,
\qquad \rBar_{n^*+1} 
= \rhoBar_{n^*+1}^{new} \left(\Delta t_{n^*+1}^{new}\right)^2.
\end{equation}
As a compromise between cost and accuracy, we here propose the
following mixed approach to updating error indicators post refinement:
With $\nRefine$ denoting the prescribed number of refinement 
iterations of the input mesh, let all error indicators be completely recomputed
every $\widetilde N = \bigO{\log(\nRefine)}$-th iteration, whereas
for the remaining $\nRefine-\widetilde N$ iterations, only
local updates of the error indicators are computed. Following this
approach, the computational cost of refining a mesh holding $N$ time
steps into a mesh of $2N$ time steps becomes $\bigO{N\log(N)^2}$.
Observe that the
asymptotically dominating cost is to sort the mesh's error indicators
$\bigO{\log(N)}$ times. To anticipate the computational cost for the
MSE adaptive MLMC algorithm, this implies that the cost of generating an
MSE adaptive realization pair is $\text{Cost}(\DlG) = \bigO{\ell^2 2^{\ell}}$.

\subsubsection{Pseudocode}\label{subsubsec:mseAdaptivePseudoCode}
The mesh refinement and the computation of 
error indicators are presented in Algorithms~\ref{alg:meshRefinement}
and~\ref{alg:computeErrorIndicators}, respectively.

\begin{algorithm}[H]
\caption{{\bf meshRefinement}}
\begin{algorithmic}\label{alg:meshRefinement}
  \STATE{\bf Input:} Mesh $\Delta t$, Wiener path $W$,
  number of refinements $\nRefine$, maximum time step $\DtMax$

  \STATE{\bf Output:} Refined mesh $\Delta t$ and Wiener path $W$.
  
  \medskip

    \STATE{Set the number of re-computations of all error indicators
      to a number $\widetilde N = \bigO{\log(\nRefine)}$ and compute the
      refinement batch size $\widehat N = \lceil \nRefine/ \widetilde N \rceil$.}
    
  \FOR{$i=1$ \TO $\widetilde N$ }
      
      \STATE{Completely update the error density by applying\\
	$
	[\rBar, \barX, \overline \varphi_x, \rhoBar ] = \textbf{computeErrorIndicators}(\Delta t, W ).
	$
      }

    \IF{ $\nRefine > 2 \widehat N$} 
    \STATE{Set the below for-loop limit to $J = \widehat N$.}
    \ELSE{}
    \STATE{Set $J = \nRefine$.}
    \ENDIF
    \FOR{$j=1$ \TO $J$ } 
	  \STATE{Locate the largest error indicator $\rBar_{n^*}$ using Equation~\eqref{eq:errorIndMax}.}
	  
	  \STATE{Refine the interval $(t_{n^*}, t_{n^*+1})$ by the halving~\eqref{eq:intervalHalving}, add a 
	    midpoint value $W_{n^*+1}^{new}$ to the Wiener path by the
            Brownian bridge interpolation~\eqref{eq:brownianBridge},
          and set $\nRefine = \nRefine -1$.}
		 
	  \STATE{Locally update the error indicators $r_{n^*}^{new}$ and $r_{n^*+1}^{new}$ by the steps~\eqref{eq:errorIndInterpolation} 
		and~\eqref{eq:updateErrorInds}.}
      \ENDFOR
      
  \ENDFOR
  
  \STATE Do a final sweep over the mesh and refine all time steps of the input mesh which
  are strictly larger than $\DtMax$.

  % \STATE{ (Optional) Do a final sweep over the mesh and refine all
  %   timesteps for which $\Delta t_n > \DtMax$ (and compute
  %   corresponding Wiener path values by Brownian bridge
  %   interpolation).}
  
\end{algorithmic}
\end{algorithm}

\begin{algorithm}[ht]
\caption{ {\bf computeErrorIndicators} }
\begin{algorithmic}\label{alg:computeErrorIndicators}
  \STATE{\bf Input:} mesh $\Delta t$, Wiener path $W$.
  \STATE{\bf Output:} error indicators $\rBar$, path solutions $\barX$ and $\overline \varphi_x$, error density $\rhoBar$.
  \STATE{Compute the SDE path $\barX$ using the Euler--Maruyama algorithm~\eqref{eq:eulerMaruyama}.}
  \STATE{Compute the first variation $\overline \varphi_x$ using the backward algorithm~\eqref{eq:backwardScheme1D}.}
  \STATE{Compute the error density $\rhoBar$ and error indicators $\rBar$ by the formulas~\eqref{eq:rhoBar} 
	  and~\eqref{eq:rBar}, respectively.}
\end{algorithmic}
\end{algorithm}

\subsection{Numerical Examples}
To illustrate the procedure for computing error indicators and the
performance of the adaptive algorithm, we now present four SDE example
problems. To keep matters relatively elementary, the dual solutions,
$\phiX{t}$, for these examples are derived not from \aposteriori but
\apriori analysis. This approach results in adaptively
generated mesh points which for all problems in this section will contain
mesh points which are stopping times for which $t_n \in \FCal_{t_{n-1}}$ 
for all $n\in \{1,2,\ldots,N\}$. In Examples~\ref{ex:mcGbm},~\ref{ex:example2}
and~\ref{ex:example3}, it is straightforward to verify that the other assumptions of the respective single- and multi-dimensional MSE error
expansions of Theorems~\ref{thm:mseExpansion1D} and~\ref{thm:mseExpansionMultiD} hold, 
meaning that the adaptive approach produces numerical solutions
whose MSE to leading order are bounded by
the respective error expansions~\eqref{eq:mseExpansion1DNotComp} 
and~\eqref{eq:mseExpansionMultiDExact}.

% Quite surprisingly, we derive for the three first examples
% that the optimal adaptive mesh, in the sense~\eqref{eq:mseGoalExamples}, 
% has uniform time steps.

\begin{example}\label{ex:mcGbm}
We consider the classical geometric Brownian motion problem 
\[
dX_t = X_t dt + X_t dW_t, \quad X_0 =1,
\]
for which we seek to minimize the MSE
\begin{equation}\label{eq:mseGoalExamples}
\Exp{(X_T - \barX_T)^2 } = \min!,    \quad  N \text{ given},
\end{equation}
at the final time, $T=1$, (cf. the goal (B.1)). 
One may derive that the dual solution of this problem 
is of the form
\[
\phiX{X_t,t} = \partial_{X_t} X_T^{X_t,t} = \frac{X_T}{X_t},
\]
which leads to the error density
\[
\rho(t) = \frac{(b_x b)^2(X_t,t) \parenthesis{\phiX{X_t,t}}^2 }{2} = \frac{X_T^2}{2}.  
\]
We conclude that uniform time-stepping
is optimal. A further reduction of the MSE 
could be achieved by allowing the number of 
time steps to depend on the magnitude of $X_T^2$ for 
each realization. 
This is however outside the scope of the
considered refinement goal (B.1), where we assume
the number of time steps, $N$, is fixed for all realizations
and would be possible only to a very weak degree under 
the slight generalization of (B.1) given in assumption 
(M.2) of Theorem~\ref{thm:mseExpansion1D}. 
\end{example}

\begin{example}\label{ex:example2}
Our second example is the two-dimensional (2D) SDE problem 
\begin{align*}%\label{eq:wienerIntegral}
dW_t  & = 1 dW_t, & W_0 &= 0,\\
dX_t   & =  W_t dW_t, & X_0 &=0.
\end{align*}
Here, we seek to minimize the MSE $\Exp{(X_T - \barX_T)^2 }$
for the observable 
\[
X_T = \int_0^T W_t dW_t
\]
at the final time $T=1$. With the diffusion matrix represented by
\[
b( (W_t, X_t) ,t) = \begin{bmatrix} 1\\ W_t\end{bmatrix},
\]
and observing that 
\[
\partial_{X_t} X_T^{X_t,t} =  \partial_{X_t} \left( X_t +
\int_t^T W_s dW_s \right) = 1,
\]
it follows from the error density in multi-dimensions in
Equation~\eqref{eq:rhoBarHigherDim} that $\rho(t) = \frac{1}{2}$.  We
conclude that uniform time-stepping is optimal for this problem as
well.
\end{example}

\begin{example}\label{ex:example3}
Next, we consider the three-dimensional (3D) SDE problem
\begin{align*}%\label{eq:wienerIntegral}
dW_t^{(1)}  & = 1 dW_t^{(1)}, & W_0^{(1)}& = 0,\\
dW_t^{(2)}  & = 1 dW_t^{(2)}, & W_0^{(2)} & = 0,\\
dX_t   & =  W_t^{(1)} dW_t^{(2)} - W_t^{(2)} dW_t^{(1)}, & X_0 &=0,
\end{align*}
where $W_t^{(1)}$ and $W_t^{(2)}$ are independent Wiener processes.
Here, we seek to minimize the MSE $\Exp{(X_T - \barX_T)^2 }$
for the Levy area observable 
\[
X_T =\int_0^T (W_t^{(1)} dW_t^{(2)} - W_t^{(2)} dW_t^{(1)}),
\]
at the final time, $T=1$. Representing the diffusion matrix by
\[
b( (W_t, X_t) ,t) = \begin{bmatrix} 1& 0 \\ 0& 1\\ -W_t^{(1)} & W_t^{(2)} \end{bmatrix},
\]
and observing that 
\[
\partial_{X_t} X_T^{X_t,t} =  \partial_{X_t} \left( X_t + \int_t^T (W_s^{(1)} dW_s^{(2)} - W_s^{(2)} dW_s^{(1)}), \right) = 1,
\]
it follows from Equation~\eqref{eq:rhoBarHigherDim} that $\rho(t) = 1$.  We
conclude that uniform time-stepping is optimal for computing Levy
areas.
%  (under our implicit assumptions of error being measured in terms
% of the MSE and the Euler--Maruyama method is used for numerical
% integration).
\end{example}

\begin{example}\label{ex:wTSquared}
As the last example, we consider the 2D SDE
\begin{align*}%\label{eq:wienerIntegral}
dW_t  & = 1 dW_t, & W_0 &= 0,\\
dX_t   & =  3(W_t^2 - t) dW_t,  &\quad X_0 &=0.
\end{align*}
We seek to minimize the MSE~\eqref{eq:mseGoalExamples}
at the final time $T=1$. For this problem, it may be shown by
\Ito calculus that the pathwise exact solution is $X_T = W_T^3 -
3W_TT$.  Representing the diffusion matrix by
\[
b( (W_t, X_t) ,t) = \begin{bmatrix} 1\\ 3(W_t^2-t)\end{bmatrix},
\]
equation~\eqref{eq:rhoBarHigherDim} implies that $\rho(t) = 18W_t^2$.
This motivates the use of discrete error indicators, $\rBar_n = 18 W_{t_n}^2
\Delta t_n^2$, in the mesh refinement criterion.
For this problem, we may not directly conclude that the error 
expansion~\eqref{eq:mseExpansionMultiDExact} holds 
since the diffusion coefficient does not fulfill the assumption 
in Theorem~\ref{thm:mseExpansionMultiD}. Although we will not include 
the details here, it is easy to derive that
$\partial_{x}^j X_T^{x,t} = 0$ for all $j>1$ and to
prove that the MSE leading-order error expansion also holds for this 
particular problem by following the steps of the 
proof of Theorem~\ref{thm:mseExpansion1D}.
In Figure~\ref{fig:wTSquaredErrorVsTimesteps}, we compare the uniform and
adaptive time-stepping Euler--Maruyama algorithms in terms of MSE vs.
the number of time steps, $N$. Estimates for the MSE for both algorithms
are computed by MC sampling using $M=10^6$ samples. This is a sufficient
sample size to render the MC estimates' statistical error
negligible.
For the adaptive algorithm, we have used the following input parameter
in Algorithm~\ref{alg:meshRefinement}: uniform input mesh, $\Delta t$, 
with step size $2/N$ (and $\DtMax = 2/N$). The number of refinements is
set to $\nRefine =N/2$. We observe
that the algorithms have approximately equal convergence
rates, but, as expected, the adaptive algorithm is slightly
more accurate than the uniform time-stepping algorithm.
\begin{figure}[H]
    \centering
    \includegraphics[width=0.75\textwidth]{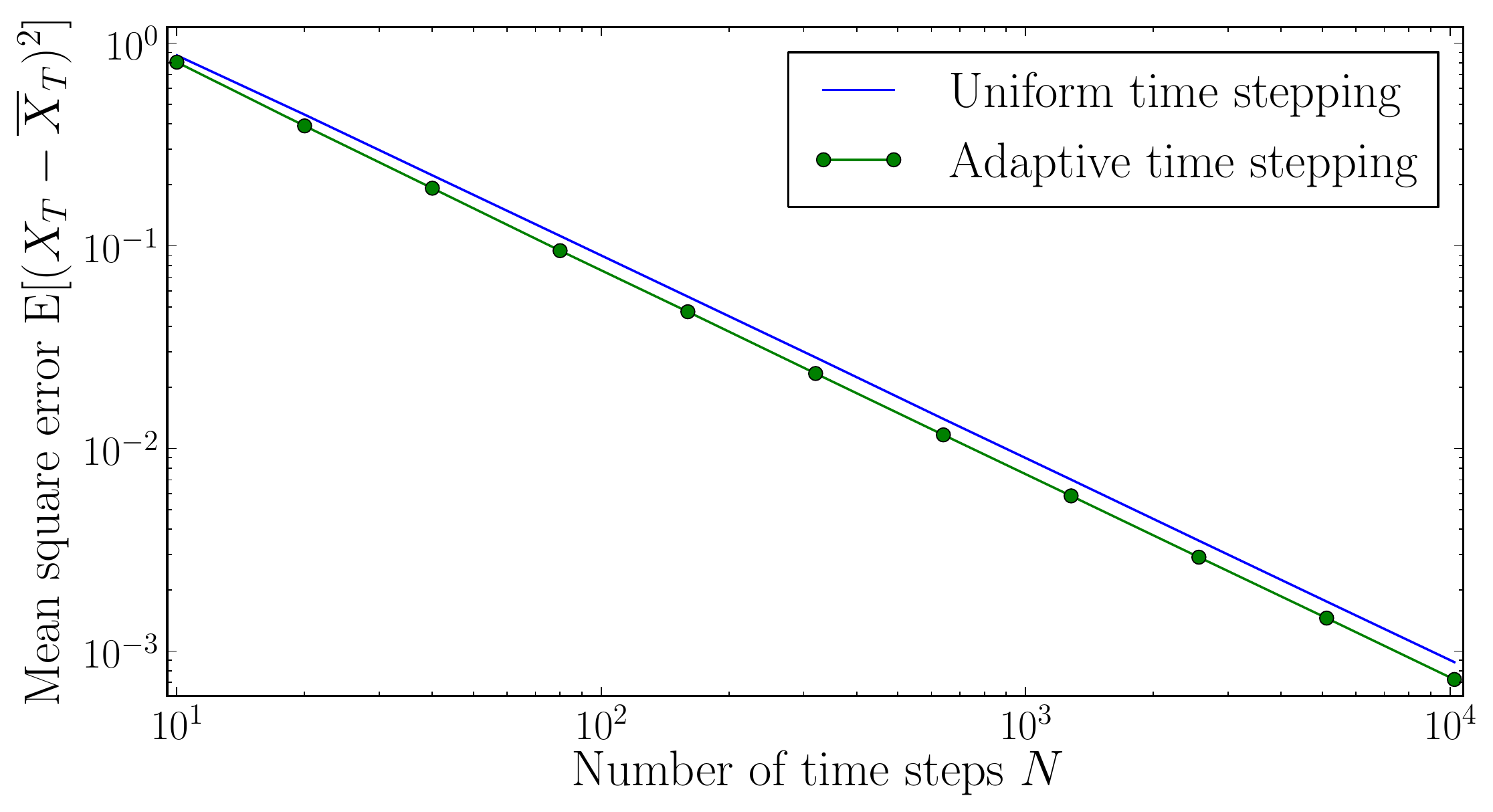}
    \caption{Comparison of the performance of uniform and adaptive
      time-stepping Euler--Maruyama numerical integration for
      Example~\ref{ex:wTSquared} in terms of MSE vs. number of time
      steps.}
    \label{fig:wTSquaredErrorVsTimesteps}
\end{figure}

\end{example}

% \begin{example}
% \textcolor{red}{(Maybe test this as well)}
% We consider the jump in drift proposed 
% \[
% dX_t = a(X_t) dt + dW_t
% \]
% with 
% \[
%  a(x) = 
%  \begin{cases}
%  \theta_0, & x\leq 0\\
%  \theta_1, & x>0.
% \end{cases}
% \]
% for some scalars $\theta_0$ and $\theta_1$. 

% Proposed squared error expansion
% \[
% \Exp{ \parenthesis{g(X_T) - \g{\barX_T}}^2 } \leq  
% \Exp{ \left( \sum_{n=0}^{N-1} \abs{a(\barX_{n+1}) - a(\barX_{t_n})}
%     e^{\Delta \theta (T-t_{n+1}) } \Delta  t_n \right)^2 }
% \]
% with $\Delta \theta = \phi_1- \phi_0$. Error density
% \[
% \rhoBar_n = N \Delta a_n e^{\Delta \theta (T-t_{n+1}) }
% \]

% \end{example}

%% \begin{remark}
%% The error densities derived in the above examples can also
%% be used in the error expansion~\eqref{eq:mseExpansion1D} 
%% to achieve asymptotic error control of the MSE. See~\cite{Szepessy01} 
%% for an approach on error control for weak approximations of SDE
%% by means of the error density terms.
%% \end{remark}

\section{Extension of the Adaptive Algorithm to the Multilevel Setting}
\label{sec:adaptiveMLMC}
In this section, we incorporate the MSE adaptive time-stepping
algorithm presented in the preceding section into an MSE adaptive MLMC
algorithm for weak approximations.  First, we shortly recall the
approximation goal and important concepts for the MSE adaptive MLMC
algorithm, such as the structure of the adaptive mesh hierarchy and MLMC
error control. Thereafter, the MLMC algorithm is presented in
pseudocode form.

\subsection{Notation and Objective} \label{subsec:mlmcNotation2} For a
tolerance, $\tol>0$, and confidence, $0<1-\delta<1$, we recall that our
objective is to construct an adaptive time-stepping MLMC estimator,
$\AMLSimple$, which meets the approximation constraint
\begin{equation}\label{eq:objectiveRecall}
\Prob{\abs{ \Exp{ \gXT } - \AMLSimple} \leq \tol} \geq 1-\delta.
\end{equation}
We denote the multilevel estimator by
\begin{equation*} \label{eq:MLestimatorRecall}
\begin{split}
\AMLSimple:= \sum_{\ell=0}^L \underbrace{\sum_{i=1}^{M_\ell} \frac{ \DlGO  }{M_\ell}}_{=:\A{ \Delta_\ell g; M_\ell}},
\end{split}
\end{equation*}
where
\[
 \Delta_\ell g(\omega) := 
 \begin{cases} 
    \g{\barXL[0](\omega) }, &\text{if} \quad \ell = 0,\\
    \g{\barXL[\ell](\omega) } - \g{\barXL[\ell-1](\omega) }, & \text{else.}
 \end{cases}
\]
Section~\ref{subsubsec:adaptiveMlmcMesh} presents further details on
MLMC notation and parameters.

\subsubsection{The Mesh Hierarchy}\label{subsec:meshConstructionMLMC2}

A realization, $\DlGO$, is generated on a nested pair of mesh
realizations
\[
% \Dt{0}(\omega_{i,\ell}) \subset 
\ldots \subset \Dt{\ell-1}(\omega_{i,\ell}) \subset \Dt{\ell}(\omega_{i,\ell}).
\]
Subsequently, mesh realizations are generated step by step from 
a prescribed and deterministic input mesh, $\Dt{-1}$, holding $N_{-1}$ uniform
time steps. First, $\Dt{-1}$ is refined into a mesh, $\Dt{0}$, by
applying
Algorithm~\ref{alg:meshRefinement}, namely
\[
[\Dt{0}, \WL{0}] = \textbf{meshRefinement}\left(\Dt{-1}, \WL{-1}, \nRefine = N_{-1}, \DtMax= N_{0}^{-1} \right).
\]
% with the optional refinement step implemented and recalling that $N_\ell := 2^{\ell+1}N_{-1}$.  The
% output mesh $\Dt{0}$ contains no time steps larger than $T/N_0$, but
% may have up to $N_0-1$ time steps smaller than.  
The mesh refinement process is iterated until meshes $\Dt{\ell-1}$ and $\Dt{\ell-1}$
are produced, with the last couple of iterations being
\[
[\Dt{\ell-1}, \WL{\ell-1} ] =
\textbf{meshRefinement}\left(\Dt{\ell-2},\WL{\ell-2}, \nRefine = N_{\ell-2} , \DtMax= N_{\ell-1}^{-1} \right),
\]
and 
\[
[\Dt{\ell}, \WL{\ell}] = \textbf{meshRefinement}\left(\Dt{\ell-1},
\WL{\ell-1} , \nRefine = N_{\ell-1}, \DtMax= N_{\ell}^{-1}  \right).
\]
The output realization for the difference
$\DlGO = \g{\barXL(\omega_{i,\ell})} - \g{ \barXL[\ell-1](\omega_{i,\ell}) }$ is 
thereafter generated on the output temporal mesh and Wiener path pairs,
$(\Dt{\ell-1}, \WL{\ell-1})$ and $(\Dt{\ell}, \WL{\ell})$.

For later estimates of the computational cost of the MSE adaptive MLMC
algorithm, it is useful to have upper bounds on the growth of the number of time
steps in the mesh hierarchy, $\{\Dt{\ell}\}_\ell$, as $\ell$
increases. Letting $\abs{\Delta t}$ denote the number of time steps in
a mesh, $\Delta t$ (i.e., the
cardinality of the set $\Delta t = \{\Delta t_0, \Delta t_1, \ldots
\}$), the following bounds hold 
\begin{equation*}\label{eq:nLBounds}
 N_\ell \leq \abs{\Dt{\ell}} < 2 N_\ell \qquad \forall \ell \in \N_0.
\end{equation*}
The lower bound follows straightforwardly from the mesh hierarchy
refinement procedure described above. To show the upper bound, 
notice the maximum
number of mesh refinements going from a level $\ell-1$ mesh,
$\Dt{\ell-1}$ to a level $\ell$ mesh, $\Dt{\ell}$ is $2N_{\ell-1} -1$.
Consequently,
\[
\begin{split}
|\Dt{\ell}| &\leq |\Dt{-1} | + \sum_{j=0}^{\ell-1} \text{Maximum number of
  refinements going from $\Dt{j-1}$ to $\Dt{j}$ } \\
& \leq N_{-1} +  2 \sum_{j=0}^{\ell} N_{j-1} - (\ell+1) < 2 N_\ell.
\end{split}
\]

% We observe that the resulting MSE adaptive mesh $\Dt{\ell}$ contains
% $N_\ell$ time steps, just like the a uniform time step mesh
% $\Dt{\ell}$ does. 

\begin{remark}
  For the telescoping property $\Exp{\AMLSimple}=\Exp{\g{\barXL}}$ to
  hold, it is not required that the adaptive mesh hierarchy is nested,
  but non-nested meshes make it more complicated to compute Wiener
  path pairs $(\WL{\ell-1},\WL{\ell})(\omega)$. In the numerical tests
  leading to this work, we tested both nested and non-nested adaptive
  meshes and found both options performing satisfactorily.
\end{remark}
%For estimating the computational cost of the MSE adaptive MLMC
% algorithm, as we do in Section~\ref{subsec:}.
% it is at least convenient to have upper bounds on the growth of the number of time
% steps in the mesh hierarchy $\{\Dt{\ell}\}_\ell$ as the level $\ell$
% increases. Letting number of time steps in a mesh by (i.e., the
% cardinality of the set $\Delta t = \{\Delta t_0, \Delta t_1, \ldots

% For estimating the computational cost of the MSE adaptive MLMC
% algorithm, as we do in Section~\ref{subsec:},
% it is at least convenient to have upper bounds on the growth of the number of time
% steps in the mesh hierarchy $\{\Dt{\ell}\}_\ell$ as the level $\ell$
% increases. Letting number of time steps in a mesh by (i.e., the
% cardinality of the set $\Delta t = \{\Delta t_0, \Delta t_1, \ldots
% \}$), it we have that 
% \begin{equation}\label{eq:nLBounds}
%  N_\ell \leq \abs{\Dt{\ell}} < 2 N_\ell \qquad \forall \ell \in \N_0.
% \end{equation}
% The lower bound follows straightforwardly from the mesh hierarchy
% refinement procedure described above. For theupper bound, the maximum
% number of mesh refinements going form a level $\ell-1$ mesh
% $\Dt{\ell-1}$ to a level $\ell$ mesh $\Dt{\ell}$ is $2N_{\ell-1} -1$.
% Consequently,
% \[
% \begin{split}
% |\Dt{\ell}| &\leq |\Dt{0} | + \sum_{j=0}^{\ell-1} \text{Maximum number of
%   refinements from $\Dt{j}$ to $\Dt{j+1}$ } \\
% & \leq N_{-1} +  2 \sum_{j=0}^{\ell-1} N_{\ell-1} - 1 < 2 N_\ell.
% \end{split}
% \]

\subsection{Error Control}\label{subsec:errorControl}
The error control for the adaptive MLMC algorithm follows the general framework 
of a uniform time-stepping MLMC, but for the sake of completeness, we recall  
the error control framework for the setting of weak approximations.
By splitting
\[
\begin{split}
 \abs{\Exp{\gXT} - \AMLSimple}
 \leq   \underbrace{\abs{ \Exp{\gXT - \g{ \barXL[L] }}}}_{=:\mathcal{E}_T}
+\underbrace{\abs{\Exp{ \g{ \barXL[L] } - \AMLSimple}}}_{=:\mathcal{E}_S}
\end{split}
\]
and 
\begin{equation}\label{eq:toleranceSplit}
\tol = \tolT + \tolS,
\end{equation}
we seek to implicitly fulfill~\eqref{eq:objectiveRecall} by imposing
the stricter constraints
\begin{align}
\mathcal{E}_T &\leq \tolT,  & &\text{the time discretization error},\\
P\parenthesis{\mathcal{E}_S \leq \tolS} & \geq 1-\delta, & &\text{the statistical error.}
\end{align}

\subsubsection{The Statistical Error}\label{subsec:statError}

Under the moment assumptions stated in~\cite{Durrett96}, 
Lindeberg's version of the Central Limit Theorem yields that as $\tol \downarrow 0$,
\begin{equation*}\label{eq:lindebergCltConv}
\frac{\AMLSimple- \Exp{\g{\barXL[L] } }}{\sqrt{\Var{\AMLSimple }}} \xrightarrow[]{D} N(0,1).
\end{equation*}
Here, $\xrightarrow[]{D}$ denotes convergence in distribution. By
construction, we have 
\[
 \Var{\AMLSimple} =  \sum_{\ell=0}^{L} \frac{\Var{\DlG}}{M_\ell}.
\]
This asymptotic result motivates the statistical error constraint
\begin{equation}\label{eq:statError2}
\Var{ \AMLSimple} \leq \frac{\tolS^2}{\Cc^2},
\end{equation}
where $\Cc(\delta)$ is the confidence parameter chosen such that  
\begin{equation}\label{eq:CcDefinition}
 1- \frac{1}{\sqrt{2 \pi}} \int_{-\Cc}^\Cc e^{-x^2/2} \, dx =  (1-\delta),
\end{equation}
for a prescribed confidence $(1-\delta)$. 

Another important question is how to distribute the number of samples,
$\{M_\ell\}_\ell$, on the level hierarchy such that both the
computational cost of the MLMC estimator is minimized and the
constraint~\eqref{eq:statError2} is met. Letting $C_\ell$ denote the
expected cost of generating a numerical realization $\DlGO$, the
approximate total cost of generating the multilevel estimator becomes
\[
\CML := \sum_{\ell=0}^L C_\ell M_\ell.
\]
An optimization of the number of samples at each level can then be
found through minimization of the Lagrangian
\[
 \mathcal{L}(M_0, M_1, \ldots, M_L, \lambda) 
= \lambda \left( \sum_{\ell=0}^L \frac{\Var{\DlG}}{M_\ell} - \frac{\tolS^2}{\Cc^2} \right)
+ \sum_{\ell=0}^L C_\ell M_\ell,
\]
yielding
\[
 M_\ell = \roundUp{ \frac{\Cc^2}{\tolS^2}
   \sqrt{\frac{\Var{\DlG}}{C_\ell}} 
\sum_{\ell=0}^L \sqrt{C_\ell \Var{\DlG } }  }, \quad \ell = 0, 1, \ldots, L.
\]
Since the cost of adaptively refining a mesh, $\Dt{\ell}$, is
$\bigO{N_\ell \log(N_\ell)^2}$, as noted in
Section~\ref{subsubsec:errorIndicators}, the cost of generating an SDE
realization, is of the same order: $C_\ell = \bigO{N_\ell
  \log(N_\ell)^2}$. Representing the cost by its leading-order term and
  disregarding the logarithmic factor, an approximation to the 
  level-wise optimal number of samples becomes
\begin{equation}\label{eq:determineMl}
 M_\ell = \roundUp{ \frac{\Cc^2}{\tolS^2} \sqrt{\frac{\Var{\DlG}}{N_\ell}} 
 \sum_{\ell=0}^L \sqrt{N_\ell \Var{\DlG } }  }, \quad \ell = 0, 1, \ldots, L.
\end{equation}
\begin{remark}\label{rem:sampleVarDef}
In our MLMC implementations, the variances, $\Var{\DlG}$, 
in equation~\eqref{eq:determineMl} are approximated by sample variances.
To save memory in our parallel computer implementation, 
the maximum permitted batch size for a set of realizations, $\{\DlGO \}_i$, is 
set to 100,000.  
For the initial batch consisting of $M_\ell = \widehat M$ samples,
the sample variance is computed by the standard approach,
\begin{equation*}\label{eq:sampleVarDef}
 \V{\DlG ; M_\ell} = \frac{1}{M_\ell-1} \sum_{i=1}^{M_\ell}
  (\DlGO - \A{\DlG; M_\ell} )^2.
\end{equation*}
Thereafter, for every new batch of realizations, 
$\{\DlGO \}_{i=M_{\ell}+1}^{M_{\ell}+M}$ 
($M$ here denotes an arbitrary natural number smaller or equal to 100,000),
we incrementally update the sample variance,
\[
\begin{split}
\V{\DlG ; M_\ell +M } & =  \frac{ M_\ell  }{ M_\ell+M } \times \V{\DlG ; M_\ell}
\\
&\quad + \frac{1}{(M_\ell+M-1)}  \sum_{i=M_\ell +1}^{M_\ell + M}
  (\DlGO - \A{\DlG; M_\ell + M} )^2, 
   \end{split}
\]
and update the total number of samples on level $\ell$ accordingly,
$M_\ell = M_\ell + M$.
%a numerically stable way to approximate the variance in settings
%where it is possible to store all the realizations in memory.
\end{remark}

\subsubsection{The Time Discretization Error}

To control the time discretization error, we assume that a weak order 
convergence rate, $\alpha>0$, holds for the given SDE problem when
solved with the Euler--Maruyama method, i.e.,
\[
 \abs{ \Exp{\gXT - \g{ \barXL[L] }}} =  \bigO{N_L^{-\alpha}},
\]
and we assume that the asymptotic rate is reached at level
$L-1$. Then
\[
\abs{ \Exp{\gXT - \g{ \barXL[L] }}} = \abs{ \sum_{\ell=L+1}^\infty \Exp{\DlG}} \leq \abs{\Exp{\Delta_Lg}} \sum_{\ell=1}^\infty
2^{-\alpha\ell} 
 = \frac{\abs{\Exp{\Delta_Lg}}}{2^\alpha -1}.
\]
In our implementation, we assume the weak
convergence rate, $\alpha$, is known prior to sampling and,
replacing $\Exp{\Delta_Lg}$ with a sample average approximation in the 
above inequality, we determine $L$ by the following stopping criterion:
\begin{equation}\label{eq:determineL}
\frac{ \max \left( 2^{-\alpha} \abs{ \A{ \Delta_{L-1} g; M_{L-1}}} ,  \abs{\A{
        \Delta_{L} g; M_{L}} }  \right) }{2^\alpha -1}  \leq \tolT ,
\end{equation}
(cf.~Algorithm~\ref{alg:mlmcEstimator}). Here we implicitly assume that the
statistical error in estimating the bias condition is not prohibitively large.

A final level $L$ of order $\log(\tolT^{-1})$ will thus control the discretization error.

\subsubsection{Computational Cost}
Under the convergence rate assumptions stated
in Theorem~\ref{thm:cliffeComplexity}, it follows that the cost of generating an
adaptive MLMC estimator, $\AMLSimple$, fulfilling the MSE approximation goal
$\Exp{(\AMLSimple - \Exp{\gXT})^2 } \leq \tol^2$ is bounded by
\begin{equation}\label{eq:cmlBound}
\CML = \sum_{\ell=0}^L M_\ell C_\ell \leq 
\begin{cases} 
\bigO{\tol^{-2}}, & \text{if } \beta> 1,\\
 \bigO{\tol^{-2}\log(\tol)^4}, & \text{if }  \beta = 1,\\
  \bigO{\tol^{-2 + \frac{\beta-1}{\alpha}} \, \log(\tol)^2 } , &\text{if }  \beta < 1.
 \end{cases}
\end{equation}
%\frac{c L^2 }{\tolS[2] \ell N_\ell} C_\ell \leq  \frac{c \log(\tol)^4}{\tol^2},
%\textcolor{red}{This can be written more precise/expanded a bit. The Theorem~\ref{thm:cliffeComplexity}
%does not include cases when $C_l = \bigO{ \ell^x N_\ell}$, which is the one we are in.}
Moreover, under the additional higher moment approximation rate assumption 
\[
\Exp{\abs{\g{ \barXL} - \gXT }^{2+\nu} } = \bigO{2^{-\beta+\nu/2}}, 
\]
the complexity bound~\eqref{eq:cmlBound} also holds for
fulfilling criterion~\eqref{eq:mlmcGoal} asymptotically as $\tol
\downarrow 0$, (cf.~\cite{Collier14}).

\subsection{MLMC Pseudocode} \label{subsec:mlmcPseudoCode} In this
section, we present pseudocode for the implementation of the MSE
adaptive MLMC algorithm. In addition to
Algorithms~\ref{alg:meshRefinement}
and~\ref{alg:computeErrorIndicators}, presented in
Section~\ref{subsubsec:mseAdaptivePseudoCode}, the implementation consists of
Algorithms~\ref{alg:mlmcEstimator} and~\ref{alg:adaptiveRealization}.
Algorithm~\ref{alg:mlmcEstimator} describes how the stopping criterion
for the final level $L$ is implemented and how the multilevel
estimator is generated, and Algorithm~\ref{alg:adaptiveRealization}
describes the steps for generating a realization $\Delta_\ell g$.

%\newpage 

\begin{algorithm}[!ht]
\caption{{\bf mlmcEstimator}}
\begin{algorithmic}\label{alg:mlmcEstimator}
  \STATE{\bf Input:} $\tolT$, $\tolS$, confidence $\delta$, initial mesh
  $\Dt{-1}$, initial number of mesh steps $N_{-1}$,
  input weak rate $\alpha$, initial number of samples $\widehat M$.
  \STATE{\bf Output:} Multilevel estimator $\AMLSimple$.
    
\medskip 

\STATE{Compute the confidence parameter $\Cc(\delta)$ by~\eqref{eq:CcDefinition}.}

\STATE{Set $L=-1$.}

\WHILE{$L<2$ \OR ~\eqref{eq:determineL}, using the input $\alpha$ for the weak rate, is violated}

  \STATE{Set $L=L+1$.}
  
  \STATE {Set $M_L = \widehat M$, generate a set of realizations $\{ \DlGO \}_{i=1}^{M_L}$
  by applying 
  $
    \textbf{adaptiveRealizations}(\Dt{-1}).
  $
   }
  \FOR{$\ell=0$ \TO $L$}
    \STATE{Compute the sample variance $\V{ \DlG ; M_l}$.}
  \ENDFOR

  \FOR{$\ell=0$ \TO $L$}
    \STATE{Determine the number of samples $M_\ell$ by~\eqref{eq:determineMl}.}
      \IF{new value of $M_\ell$ is larger than the old value}{
	\STATE{Compute additional realizations $\{ \DlGO \}_{i=M_\ell+1}^{M_\ell^{new}}$ by applying 
$
	    \textbf{adaptiveRealizations}(\Dt{-1}).
$
	}}
      \ENDIF
  \ENDFOR
 
\ENDWHILE

Compute $\AMLSimple$ from the generated samples by using formula~\eqref{eq:MLestimator}.

\end{algorithmic}
\end{algorithm}

\begin{algorithm}[!h]
\caption{{\bf adaptiveRealization}}
\begin{algorithmic}\label{alg:adaptiveRealization}
  \STATE{\bf Input:} Mesh $\Dt{-1}$.
  \STATE{\bf Outputs:} One realization $\DlG(\omega)$
    
\medskip 

\STATE{Generate a Wiener path $\WL{-1}$ on the initial mesh $\Dt{-1}$.}

\FOR{$j=0$ \TO $\ell$ }

\STATE{Refine the mesh by applying 
\[
 [\Dt{j}, \WL{j} ] = \textbf{meshRefinement}(\Dt{j-1},\WL{j-1},
 \nRefine = N_{j-1}, \DtMax = N_{j}^{-1} ).
\]
}
\ENDFOR
\STATE{Compute Euler--Maruyama realizations $(\barXL[\ell-1],
  \barXL[\ell])(\omega)$ using the mesh pair $(\Dt{\ell-1},
  \Dt{\ell})(\omega)$ and Wiener path pair
  $(\WL{\ell-1},\WL{\ell})(\omega)$, cf.~\eqref{eq:eulerMaruyama}, and
  return the output
\[
\DlG(\omega) = \g{\barXL(\omega)} - \g{\barXL[\ell-1](\omega)}.
\]
}

\end{algorithmic}
\end{algorithm}

\begin{remark}
  For each increment of $L$ in Algorithm~\ref{alg:mlmcEstimator}, 
  all realizations $\Delta_\ell g$ that have been generated up to that point are reused in
  later computations of the multilevel estimator. 
  This approach, which is common in MLMC,
  (cf.~\cite{Giles08}), seems to work fine in practice although
  the independence between samples is then lost. Accounting for
  the lack of independence complicates the convergence analysis.
\end{remark}

\section{Numerical Examples for the MLMC Algorithms} \label{sec:MlmcExamples} To
illustrate the implementation of the MSE adaptive MLMC algorithm and to show
its robustness and potential efficiency gain over the uniform MLMC algorithm, we present
two
numerical examples in this section. 
The first example
considers a geometric Brownian motion SDE problem with sufficient
regularity, such that there is very little (probably nothing) to gain by
introducing adaptive mesh refinement. The example is
%(we have shown in Example~\ref{ex:mcGbm} that uniform time-stepping is
%optimal for minimizing the MSE when using the Euler--Maruyama method)
included to show that in settings where adaptivity is not required,
the MSE adaptive MLMC algorithm is not excessively more expensive than
the uniform MLMC algorithm. In the second example, we consider an SDE
with a random time drift coefficient blow-up of order $t^{-p}$ with $p
\in [0.5, 1)$. The MSE adaptive MLMC algorithm performs
progressively more efficiently than does the uniform MLMC algorithm
as the value of the blow-up exponent $p$ increases.
We should add, however, that although we observe numerical evidence for the numerical solutions
converging for both examples, all of the assumptions 
in Theorem~\ref{thm:mseExpansion1D} are not fulfilled for our adaptive algorithm,
when applied to
either of the two examples. We are therefore not able to prove theoretically that our adaptive algorithm
converges in these examples.

For reference, the implemented MSE adaptive MLMC algorithm is described in
Algorithms~\ref{alg:meshRefinement}--\ref{alg:adaptiveRealization}, the standard form 
of the uniform time-stepping MLMC algorithm that we 
use in these numerical comparisons is presented in
Algorithm~\ref{alg:uniformMlmcEstimator}, Appendix~\ref{sec:uniformMlmcAlg}, 
and a summary of the parameter values used in the examples is given 
in Table~\ref{tab:parameters}. Furthermore, all average properties 
derived from the MLMC algorithms that we plot for the considered examples in 
Figures~\ref{fig:mlmcGbmAvgNl}--\ref{fig:pSingularityError} below 
are computed from 100 multilevel estimator realizations,
and, when plotted, error bars are scaled to one sample standard deviation.

\begin{table}[h!]
  \centering
  \caption{List of parameter values used by the MSE adaptive MLMC
      algorithm and (when required) the uniform MLMC algorithm for the
      numerical examples in Section~\ref{sec:MlmcExamples}.
      %\jcom{On the difference between $\alpha_U$ and $\alpha_A$. I think this table is correct and I see no problem in defining both quantities, despite their similarity. The conclusion one can draw from this is that the adaptive method has the same weak rate for both test problems discussed, whereas the non-adaptive method has a rate that gets worse when the drift singularity exponent grows.}
      }
  \label{tab:parameters}
  \begin{tabular}{p{1.4cm}p{5cm}rrp{2.2cm} }
    \hline \noalign{\smallskip}
    Parameter & Description of parameter& Example~\ref{ex:mlmcGbm} & Example~\ref{ex:mlmcSingularityGbm} \\
    \noalign{\smallskip}\hline\noalign{\smallskip}
    $\delta$ & Confidence parameter, cf.~\eqref{eq:objectiveRecall}. & 0.1 & 0.1\\
    $\tol$ & Accuracy parameter, cf.~\eqref{eq:objectiveRecall}. & $[10^{-3},
    10^{-1}]$ & $[10^{-3}, 10^{-1}]$\\
    $\tolS$& Statistical error tolerance, cf.~\eqref{eq:toleranceSplit}. & \tol/2 &
    \tol/2 \\
    $\tolT$ & Bias error tolerance, cf.~\eqref{eq:toleranceSplit}. & \tol/2 & \tol/2 \\
    $\Dt{-1}$ & Pre-initial input uniform mesh having the following
    step size. & 1/2 & 1/2 \\
    $N_{0}$  & Number of time steps in the initial mesh $\Dt{0}$. & 4
    & 4\\
    $\tilde N(\ell)$ & The number of complete updates of the error
    indicators in the MSE adaptive algorithm,
    cf. Algorithm~\ref{alg:meshRefinement}. & 
    $\Big \lfloor \frac{\log(\ell+2)}{\log(2)} \Big \rfloor $ 
    & $\Big \lfloor \frac{\log(\ell+2)}{\log(2)} \Big \rfloor $\\
    $\DtMax(\ell)$ & Maximum permitted time step size. &
    $N_\ell^{-1}$ & $N_\ell^{-1}$\\
    $\Delta t_{\text{min}}$ & Minimum permitted time step size (due to
    the used double-precision binary floating-point format). &
    $ 2^{-51}$ & $2^{-51}$\\
    $\widehat M$ & Number of first batch samples for a (first) estimate
    of the variance $\Var{\DlG}$. & 100 & 20\\
    $\alpha_U$ & Input weak convergence rate used in the stopping
    rule~\eqref{eq:determineL} for uniform time step Euler--Maruyama numerical integration.& 1 & $(1-p)$\\
    $\alpha_A$ & Input weak convergence rate used in the stopping
    rule~\eqref{eq:determineL} for the MSE adaptive time step Euler--Maruyama
    numerical integration.
    & 1 & 1\\
    \noalign{\smallskip}\hline\noalign{\smallskip}
        \end{tabular}
\end{table}

\begin{example}\label{ex:mlmcGbm}
We consider the geometric Brownian motion 
\begin{equation*}\label{eq:gbmProblem2}
dX_t = X_t dt + X_t dW_t, \quad X_0 =1,
\end{equation*}
where we seek to fulfill the weak approximation
goal~\eqref{eq:mlmcGoal} for the observable, $g(x)=x$, at the final time,
$T=1$. The reference solution is $\Exp{\g{X_T}} = e^T$. From
Example~\ref{ex:mcGbm}, we recall that the MSE minimized in this
problem by using uniform time steps. However, 
our \aposteriori MSE adaptive MLMC algorithm
computes error indicators from numerical solutions of the
path and the dual solution, which may lead 
to slightly non-uniform output meshes. In Figure~\ref{fig:mlmcGbmAvgNl},
we study how close to uniform the MSE adaptive meshes are by plotting the level-wise 
ratio, $\Exp{\abs{\Dt{\ell}}}/N_\ell$, where we recall that $\abs{\Dt{\ell}}$
denotes the number of time steps in the mesh, $\Dt{\ell}$, and that a 
uniform mesh on level $\ell$ has $N_\ell$ time steps.
As the level, $\ell$, increases, $\Exp{\abs{\Dt{\ell}}}/N_\ell$ converges
to $1$, and to interpret this result, we recall from the construction of the adaptive mesh 
hierarchy in Section~\ref{sec:adaptiveMLMC} that if $\abs{\Dt{\ell}}=N_\ell$, 
then the mesh, $\Dt{\ell}$, is uniform. We thus conclude that for this 
problem, the higher the level, the more uniform the 
MSE adaptive mesh realizations generally become.
\begin{figure}[h]
    \centering
    \includegraphics[width=0.9\textwidth]{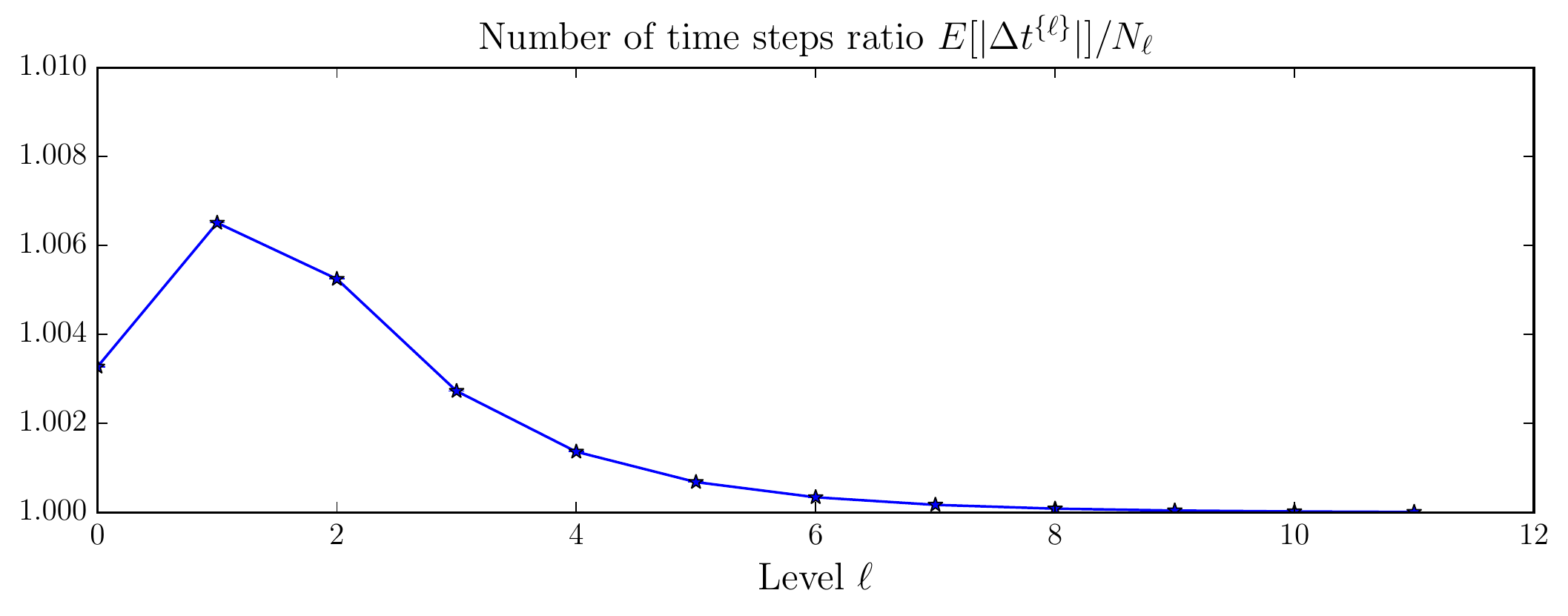}
    \caption{
    The ratio of the level-wise mean number of time steps $\Exp{\abs{\Dt{\ell}}}/N_\ell$, 
    of MSE adaptive mesh realizations to uniform mesh realizations for Example~\ref{ex:mlmcSingularityGbm}.
    }
    \label{fig:mlmcGbmAvgNl}
\end{figure}

Since adaptive mesh refinement is costly and since this problem has
sufficient regularity for the first-order weak and MSE convergence
rates~\eqref{eq:weakRate} and~\eqref{eq:strongRate} to hold,
respectively, one might expect that MSE adaptive MLMC 
will be less efficient than the uniform MLMC. This is verified in
Figure~\ref{fig:gbmComplexity}, which shows that the
runtime of the MSE adaptive MLMC algorithm grows
slightly faster than the uniform MLMC algorithm and that the cost
ratio is at most roughly $3.5$, in favor of uniform MLMC.
In Figure~\ref{fig:gbmAccuracy}, the accuracy of the
MLMC algorithms is compared, showing that both algorithms fulfill the
goal~\eqref{eq:mlmcGoal} reliably. 
Figure~\ref{fig:mlmcGbmLevelComparisons} 
further shows that both
algorithms have roughly first-order convergence rates for the weak error
$\abs{\Exp{\DlG}}$ and the variance $\Var{\DlG}$, and that the decay
rates for $M_l$ are close to identical.  We conclude that although MSE
adaptive MLMC is slightly more costly than uniform MLMC, the algorithms 
perform comparably in terms of runtime for 
this example.
\begin{figure}[h]
    \centering
    \includegraphics[width=0.9\textwidth]{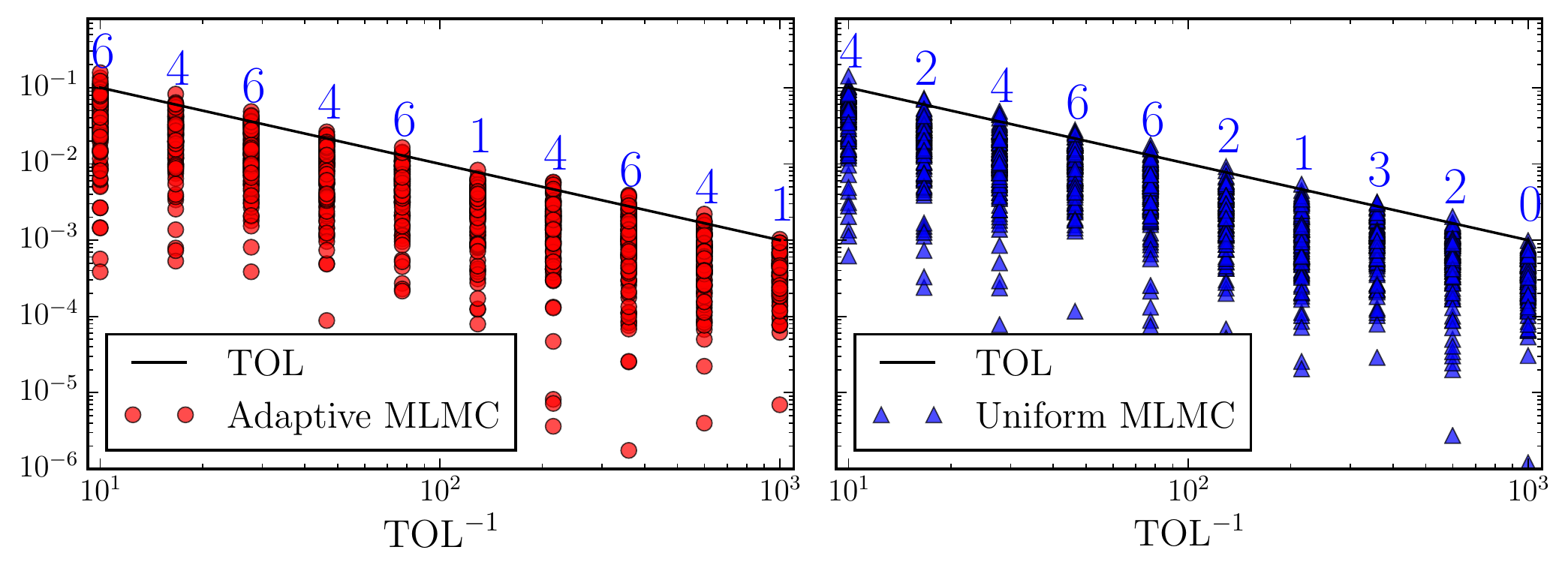}
     \caption{For a set of $\tol$ values, 100 realizations of the MSE adaptive multilevel
       estimator are computed using both MLMC algorithms for Example~\ref{ex:mlmcGbm}. 
       The errors $|\AMLSimple(\omega_i; \tol, \delta) - \Exp{\g{X_T}}|$ 
       are respectively plotted as circles (adaptive MLMC) and 
       triangles (uniform MLMC), and the
       number of multilevel estimator realizations failing the
       constraint $|\AMLSimple(\omega_i; \tol, \delta) -
       \Exp{\g{X_T}}|<\tol$ is written above the $(\tol^{-1},\tol)$
       line. Since the confidence parameter is set to $\delta = 0.1$
       and less than 10 realizations fail for any of the tested $\tol$
       values, both algorithms meet the approximation
       goal~\eqref{eq:objectiveRecall}.}
       \label{fig:gbmAccuracy}
\end{figure}

\begin{figure}[h]
    \centering
     \includegraphics[width=.87\textwidth]{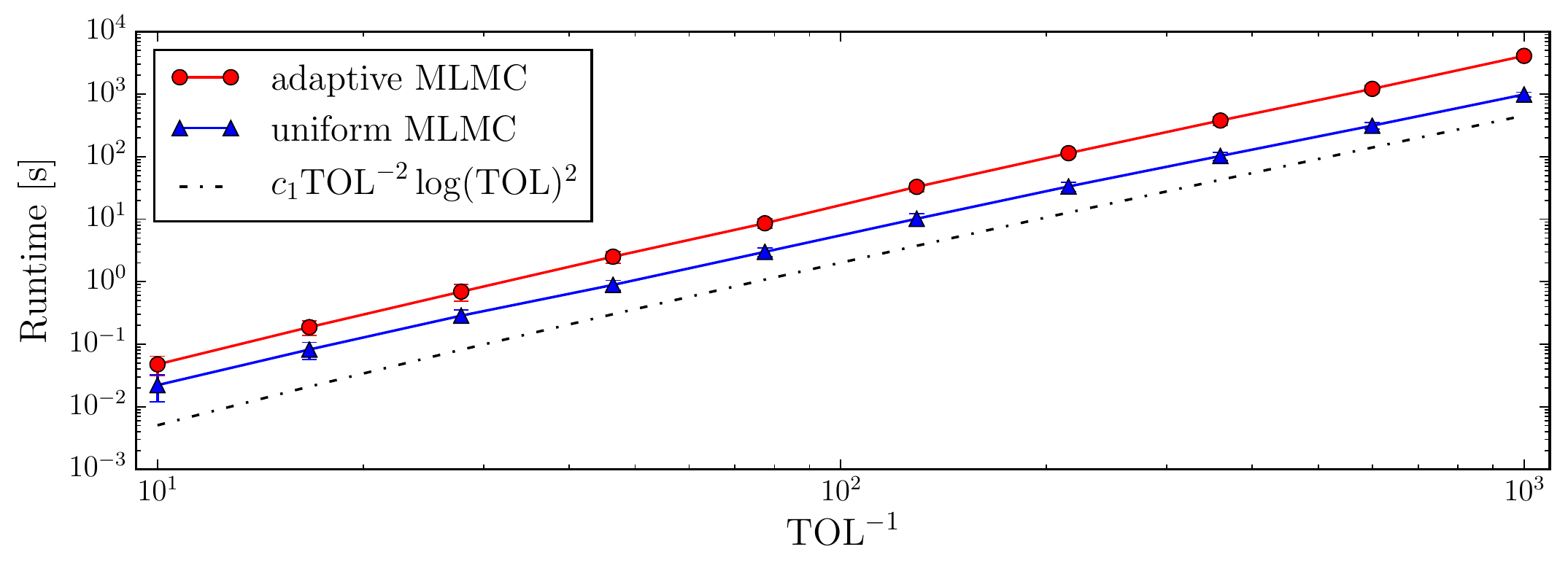}
     \caption{Average runtime vs. $\tol^{-1}$ for the two MLMC
       algorithms solving Example~\ref{ex:mlmcGbm}.}
    \label{fig:gbmComplexity}
\end{figure}

\begin{figure}[h]
    \centering
    \includegraphics[width=0.9\textwidth]{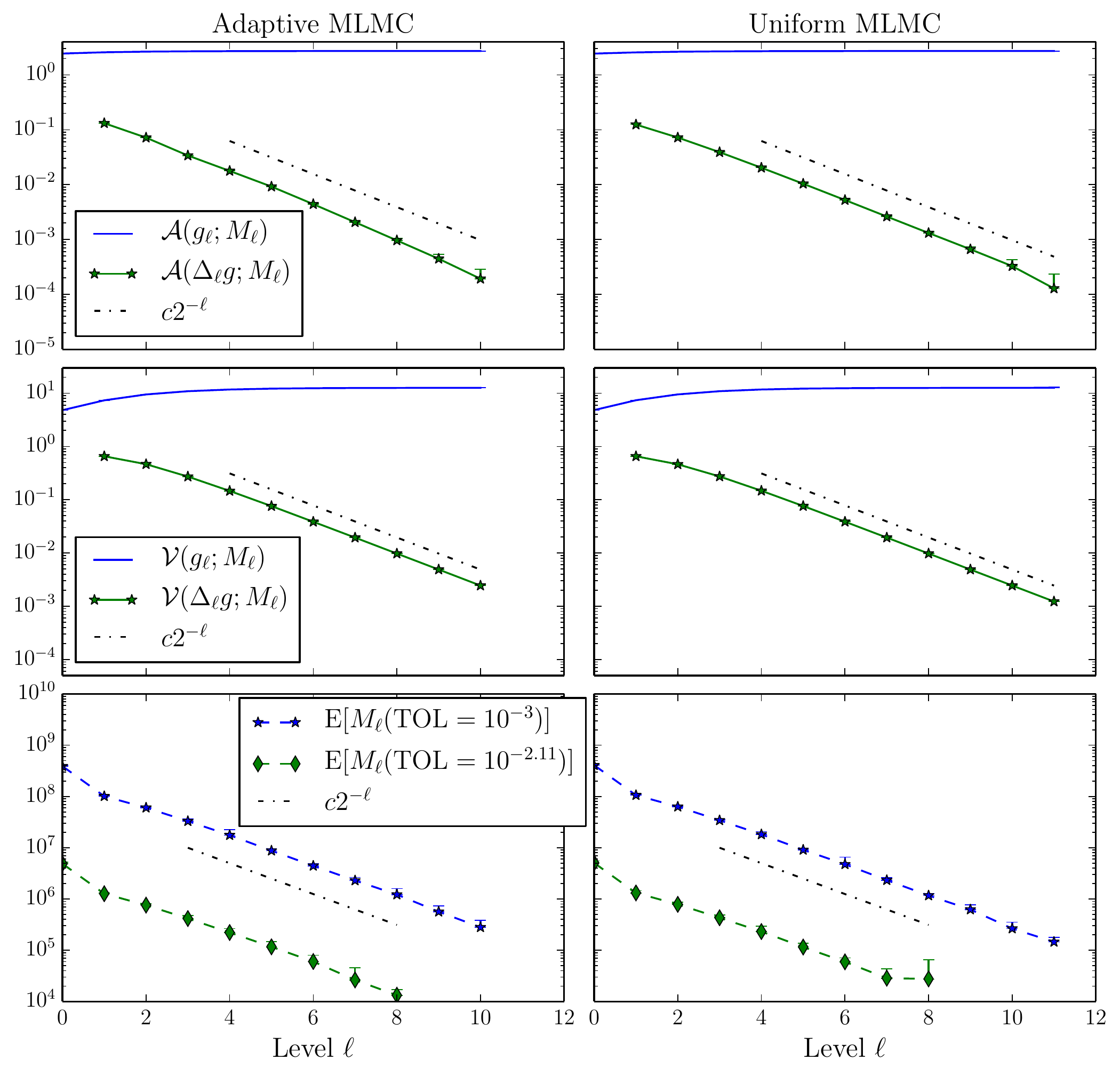}
    \caption{Output for Example~\ref{ex:mlmcGbm}
    solved with the MSE adaptive and uniform time-stepping MLMC algorithms.
    {\bf (Top)} Weak error $\abs{\Exp{\DlG}}$ for solutions at $\tol=10^{-3}$.
    {\bf (Middle)} Variance $\Var{\DlG}$ for solutions at $\tol=10^{-3}$. 
    {\bf (Bottom)} Average number of samples $E[M_l]$.}
    \label{fig:mlmcGbmLevelComparisons}
\end{figure}

\begin{remark}
The reason why we are unable to prove theoretically that the numerical solution of this 
problem computed with our adaptive algorithm asymptotically converges 
to the true solution is slightly subtle. The required smoothness conditions 
in Theorem~\ref{thm:mseExpansion1D} are obviously fulfilled, 
but due to the local update of the error indicators in our mesh refinement procedure, 
(cf. Section~\ref{subsubsec:errorIndicators}), we cannot prove that the mesh points 
will asymptotically be stopping times for which $t_n \in \FCal_{t_{n-1}}$ for all $n \in \{1,2,\ldots, N\}$.
If we instead were to use the version of our adaptive algorithm that recomputes 
all error indicators for each mesh refinement, 
the definition of the error density~\eqref{eq:rhoBar} implies that, for this particular problem, it would 
take the same value, $\rhoBar_n = \prod_{k=0}^{N-1} c_x(t_k,\barX_{t_k})^2/2$, for all indices, $n \in \{0,1,\ldots,N\}$. 
The resulting adaptively refined mesh would then become uniform and we could verify convergence, for instance,
by using Theorem~\ref{thm:mseExpansion1D}.
Connecting this to the numerical results for the adaptive algorithm that we have implemented here, we notice that 
the level-wise mean number of time steps ratio, $\Exp{\abs{\Dt{\ell}}}/N_\ell$,
presented in Figure~\ref{fig:mlmcGbmAvgNl} seems to tend towards $1$ as $\ell$ increases, a
limit ratio that is achieved only if $\Dt{\ell}$ is indeed a uniform mesh.
\end{remark}

\end{example}

\begin{example}\label{ex:mlmcSingularityGbm}

We next consider the SDE
\begin{equation}\label{eq:driftSingularity}
\begin{split}
dX_t & = \underbrace{r f(t; \xi) X_t}_{=: a(t,X_t; \xi)} dt + \underbrace{\sigma X_t}_{=:b(t,X_t; \xi)} dW_t\\                              X_0 & = 1,
\end{split}
\end{equation}
with the low-regularity drift coefficient, $f(t; \xi) = |t-\xi|^{-p}$,
interest rate, $r=1/5$, volatility, $\sigma = 0.5$, 
and observable, $g(x)=x$, at the final time
$T=1$. A new singularity point, $\xi \in U(1/4,3/4)$, is sampled for each
path, and it is independent from the Wiener paths, $W$. 
Three different blow-up exponent test cases are considered, $p=(1/2,2/3,3/4)$,
and to avoid blow-ups in the numerical integration of the drift function
component, $f(\cdot;\xi)$, we replace the fully explicit Euler--Maruyama integration
scheme with the following semi-implicit scheme:
\begin{equation}\label{eq:replacedScheme}
\barX_{t_{n+1}} = \barX_{t_n} + \begin{cases}
  r f(t_n; \xi) \barX_{t_n} \Delta t_n + \sigma \barX_{t_n} \Delta W_n, &
  \text{if} \quad f(t_n; \xi) < 2 f(t_{n+1}; \xi),\\ 
 r f(t_{n+1}; \xi) \barX_{t_n} \Delta t_n + \sigma \barX_{t_n} \Delta W_n, & \text{else.}
 \end{cases}
\end{equation}
For $p\in [1/2, 3/4]$ it may be shown that for any singularity point,
any path integrated by 
the scheme~\eqref{eq:replacedScheme} will have at most one drift-implicit
integration step.
The reference mean for the exact solution is given by
\begin{equation*}\label{eq:referenceMeanEx1}
\Exp{X_T} = 2 \int_{1/4}^{3/4} \exp\left (  \frac{r( x^{1-p} + (1-x)^{1-p})}{1-p}   \right) dx,
\end{equation*}
and in the numerical experiments, we approximate this integral value by
quadrature to the needed accuracy. 
%$\mu(p) = (1.74980270, 2.56795819,3.79720863)$ for the respective exponents $p=(1/2,2/3,3/4)$. 

\subsubsection*{The MSE Expansion for the Adaptive Algorithm}
Due to the low-regularity drift present in this problem, the resulting MSE
expansion will also contain drift-related terms that formally are of higher order. 
From the proof of Theorem~\ref{thm:mseExpansion1D},
equation~\eqref{eq:boundCheckDa}, we conclude 
that, to leading order the MSE is bounded by
\begin{equation*}\label{eq:errorExpansionPSingularity}
\Exp{ \abs{\barX_T - X_T}^2 } \leq \Exp{ \sum_{n=0}^{N-1}
  \phiBarX{n}^2 \frac{ \left(N (a_t + a_xa)^2\Delta t_n^2 +
      (b_xb)^2\right)(t_n,\barX_{t_n}; \xi) }{2} \Delta t_n^2}.
\end{equation*}
This is the error expansion we use for the adaptive mesh
refinement (in Algorithm~\ref{alg:meshRefinement}) in this example.
In Figure~\ref{fig:pSingularityPaths}, we illustrate the 
effect that the singularity exponent, $p$, has on SDE and adaptive mesh realizations. 
\begin{figure}[h]
    \centering
    \includegraphics[width=0.95\textwidth]{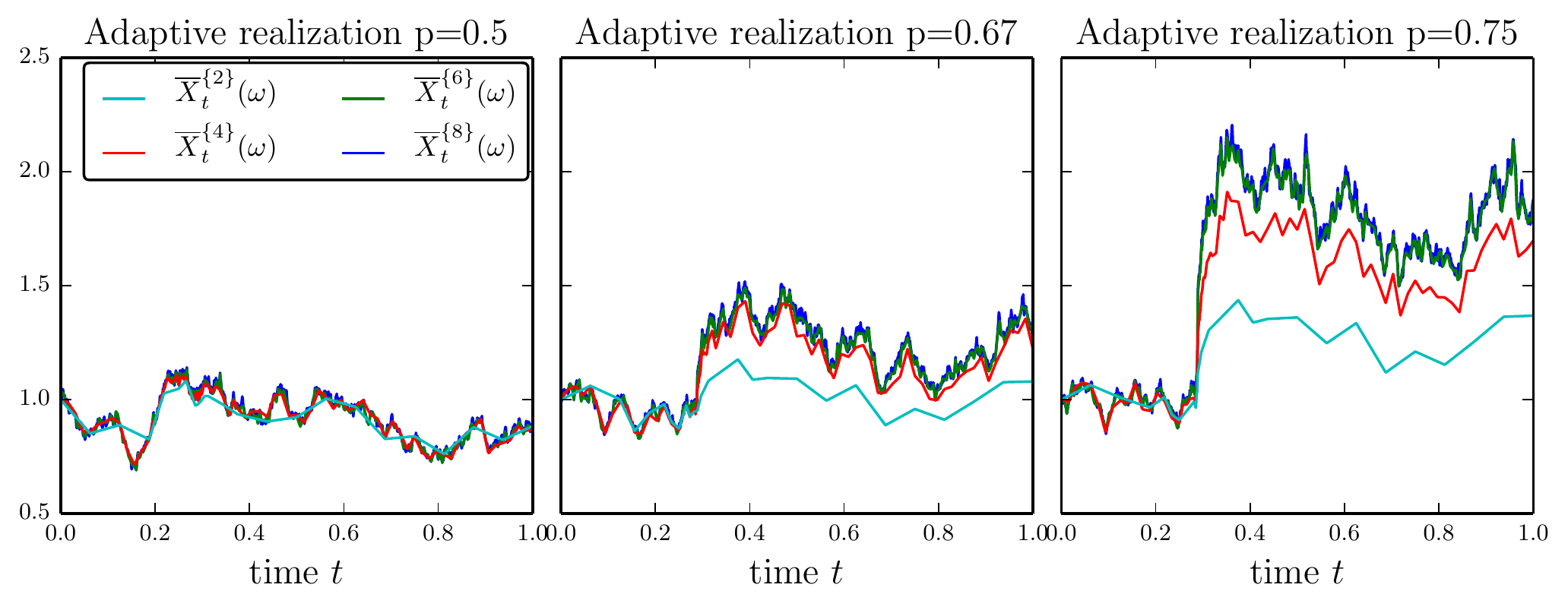}
     \hspace*{-0.2cm}\includegraphics[width=0.95\textwidth]{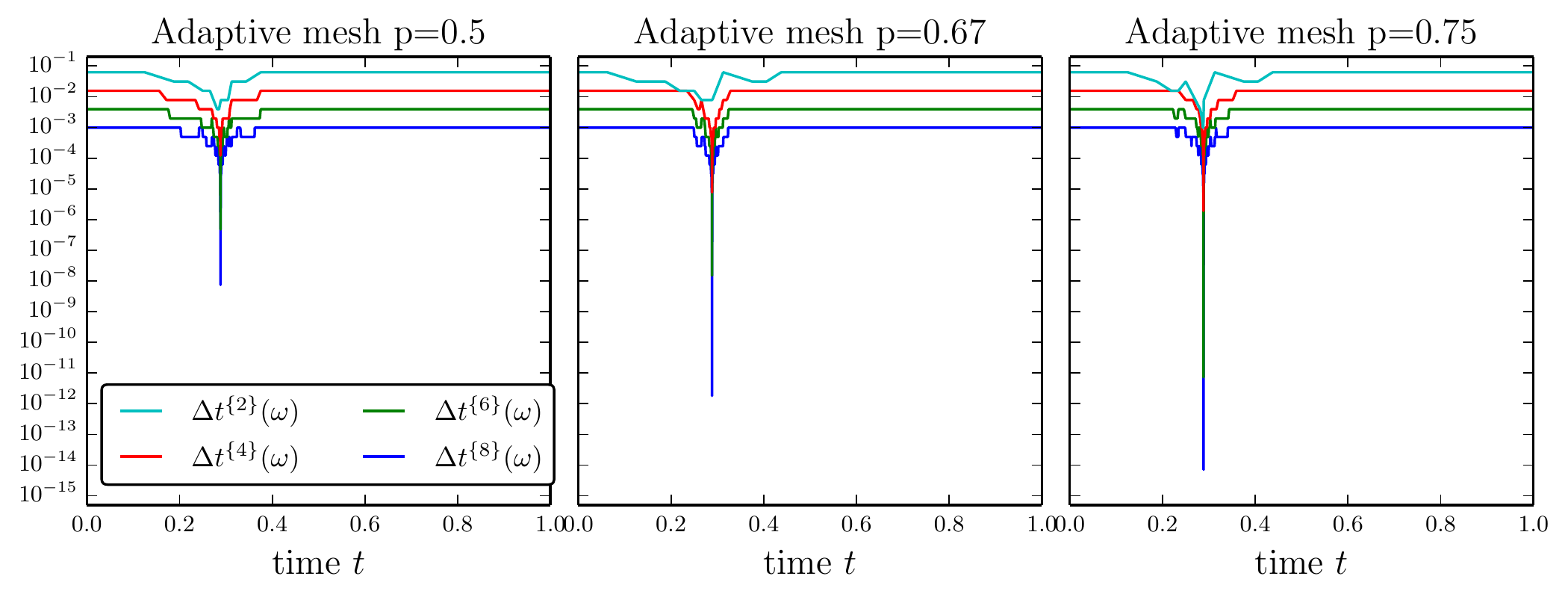}
     \caption{{\bf (Top)} One MSE adaptive numerical realization of
       the SDE problem~\eqref{eq:driftSingularity} at
       different mesh hierarchy levels. The blow-up singularity point
       is located at $\xi \approx 0.288473$ and the realizations are
       computed for three singularity exponent
       values. We observe that as the exponent, $p$, increases, the more
       jump at $t=\xi$ becomes more pronounced.
       {\bf (Bottom)} Corresponding MSE adaptive mesh realizations for
       the different test cases.  }
    \label{fig:pSingularityPaths}
\end{figure}

\subsubsection*{Implementation Details and Observations}

Computational tests for the uniform and MSE adaptive MLMC algorithms are implemented
with the input parameters summarized in Table~\ref{tab:parameters}.
The weak convergence rate, $\alpha$, which is needed in the MLMC implementations' stopping
criterion~\eqref{eq:determineL}, is estimated experimentally as $\alpha(p) =
(1-p)$ when using the Euler--Maruyama integrator with uniform time steps, and roughly 
$\alpha =1$ when using the Euler--Maruyama integrator with adaptive time steps, 
(cf.~Figure~\ref{fig:pSingularityWeakRate}). 
\begin{figure}[h]
    \centering    \includegraphics[width=0.9\textwidth]{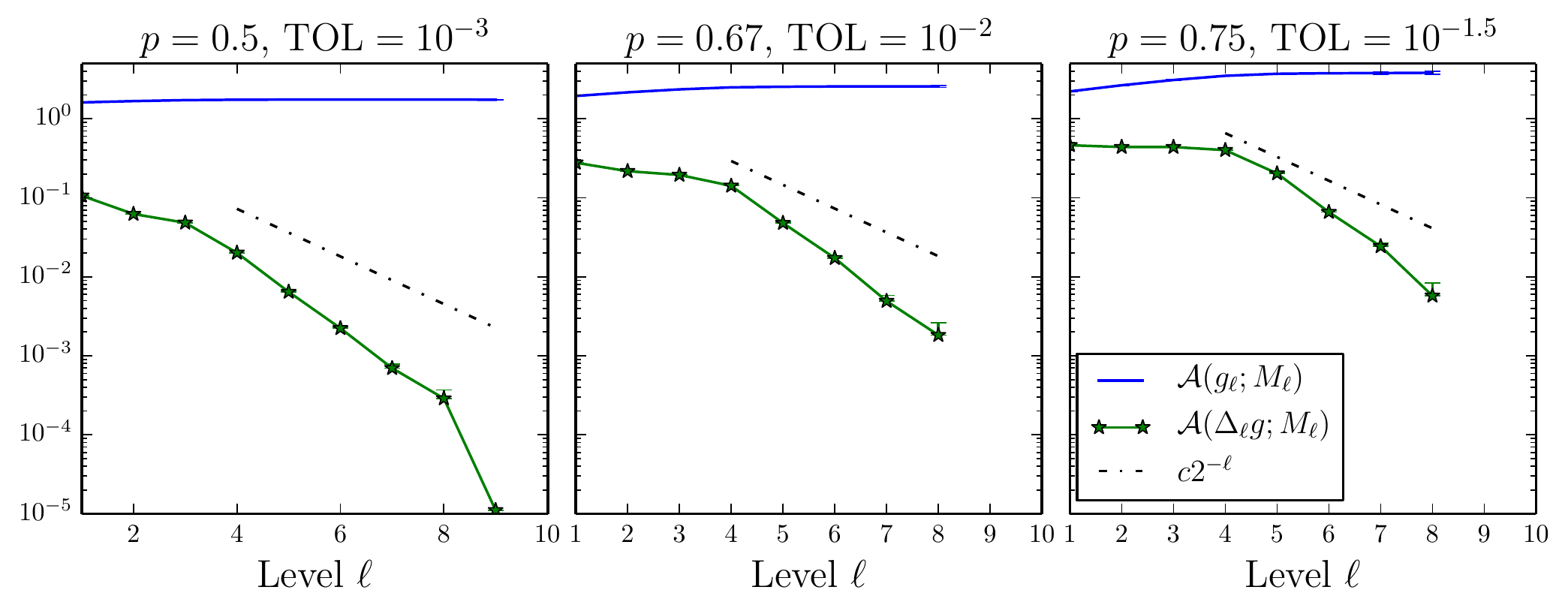}
\includegraphics[width=0.9\textwidth]{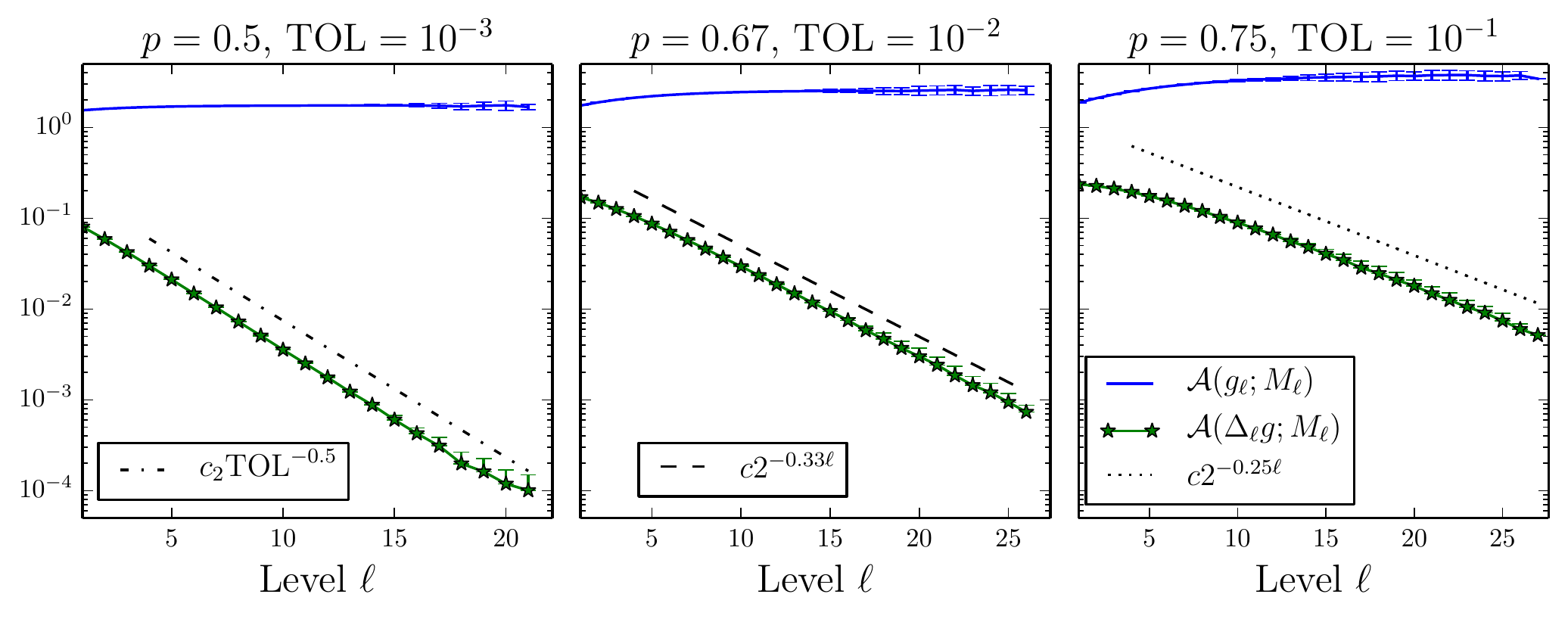}
    \caption{{\bf (Top)} Average errors $\abs{\Exp{\DlG}}$ for 
      Example~\ref{ex:mlmcSingularityGbm}
      solved with the MSE adaptive MLMC algorithm for three singularity exponent values. 
%       The
%       errors are approximated from the sample average of 100
%       multilevel estimator realizations. Error bars 
%       are plotted with one standard deviation scaling.      
       {\bf (Bottom)} Corresponding average errors for the uniform MLMC algorithm.
    }
    \label{fig:pSingularityWeakRate}
\end{figure}
We further estimate the variance convergence rate to $\beta(p) =
2(1-p)$, when using uniform time-stepping, and roughly to $\beta
= 1$ when using MSE adaptive time-stepping, (cf.~Figure~\ref{fig:pSingularityVarRate}).
\begin{figure}[h]
    \centering
    \includegraphics[width=0.9\textwidth]{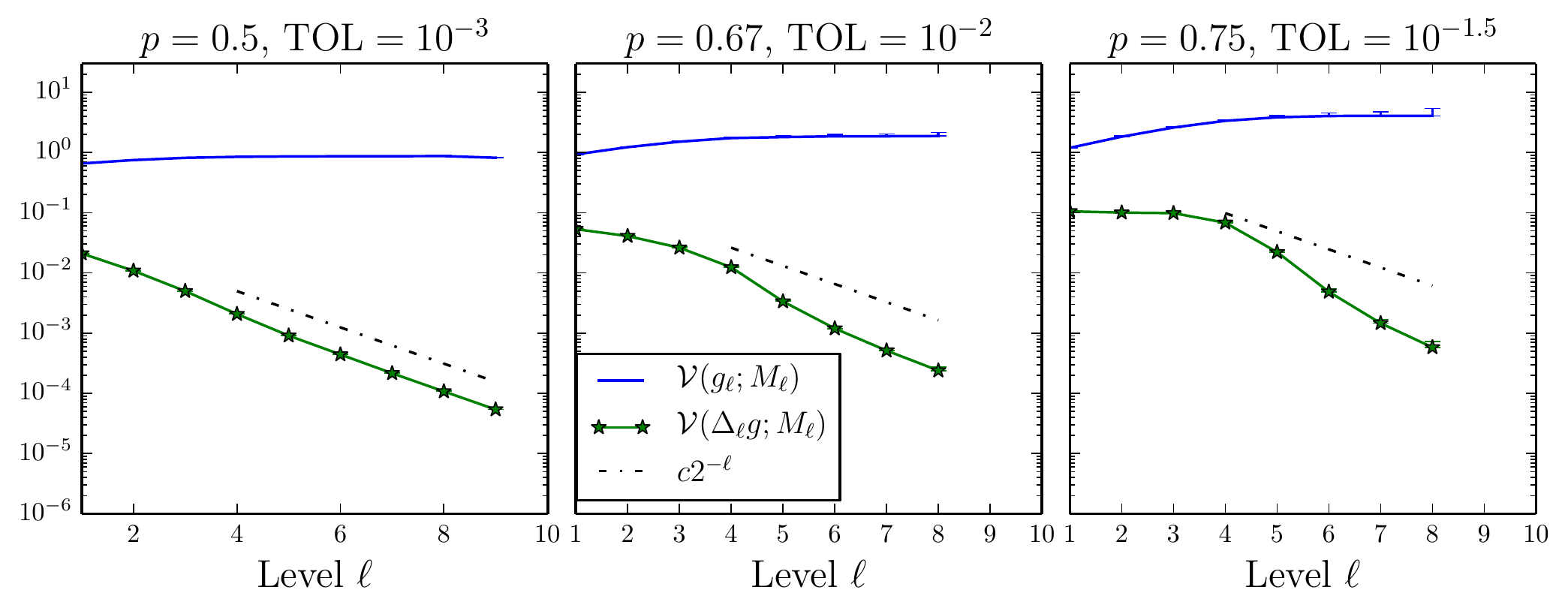}
    \includegraphics[width=0.9\textwidth]{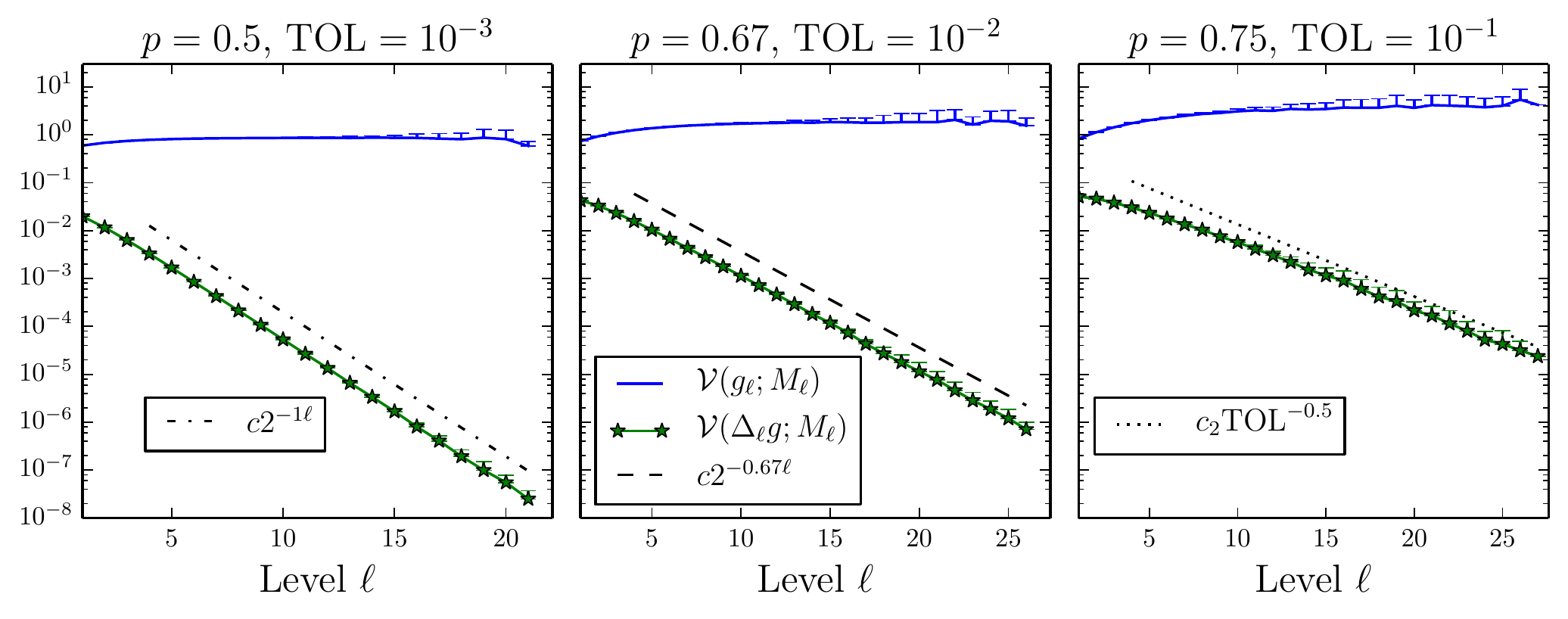}
   \caption{{\bf (Top)} Variances $\Var{\DlG}$ for 
   for Example~\ref{ex:mlmcSingularityGbm} solved with
    the MSE adaptive MLMC algorithm for three singularity exponent values. 
%       The
%       variance is approximated from the average of the sample
%       variance from 100 multilevel estimator realizations. Error
%       bars are plotted with one standard deviation scaling.
      {\bf (Bottom)} Corresponding variances for the uniform MLMC
      algorithm. The more noisy data on the highest levels is due to 
      the low number used for the initial samples, $\hat M=20$, and only 
	a subset of the generated 100 multilevel estimator realizations reached the last levels.
      }
    \label{fig:pSingularityVarRate}
\end{figure}
The low weak convergence rate for uniform MLMC implies that the number
of levels $L$ in the MLMC estimator will be become very large, even
with fairly high tolerances. Since computations of realizations on high
levels are extremely costly, we have, for the sake of computational
feasibility, chosen a very low value, $\widehat M =20$, for the initial number of samples
in both MLMC algorithms. 
\begin{figure}[h]
    \centering
    \includegraphics[width=0.9\textwidth]{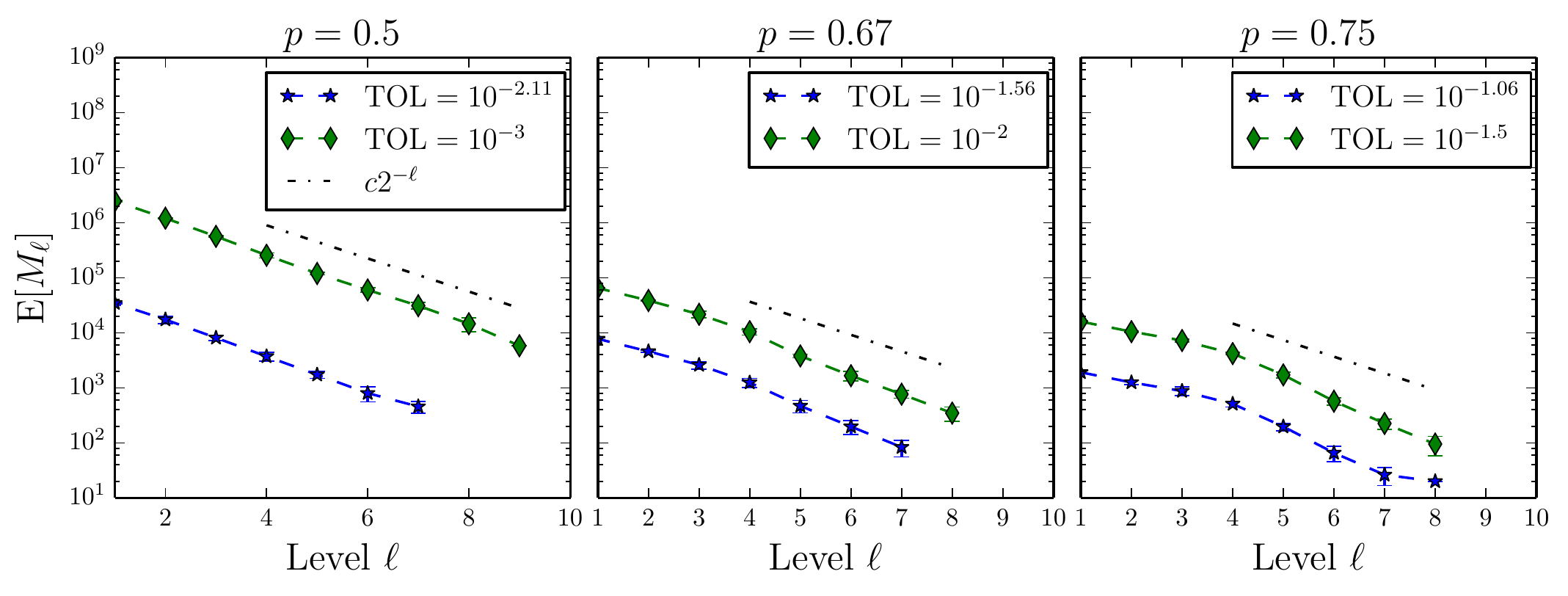}
    \includegraphics[width=0.9\textwidth]{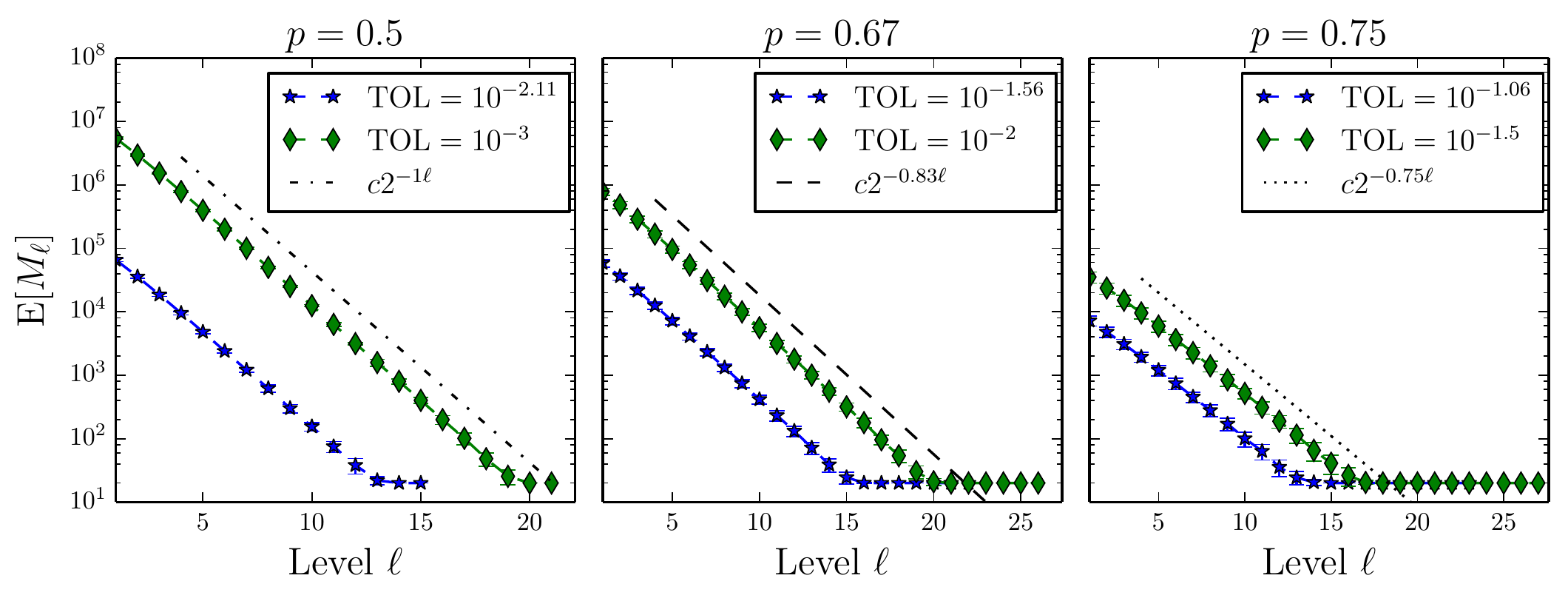}
   \caption{{\bf (Top)} Average number of samples $M_\ell$ for 
   for Example~\ref{ex:mlmcSingularityGbm} solved with
    the MSE adaptive MLMC algorithm for three singularity exponent values. 
    %       The
%       variance is approximated from the average of the sample
%       variance from 100 multilevel estimator realizations. Error
%       bars are plotted with one standard deviation scaling.
      {\bf (Bottom)} Corresponding average number of samples
      for the uniform MLMC algorithm. The plotted decay rate reference 
      lines, $c2^{-((\beta(p)+1)/2)\ell}$, for $M_\ell$ follow implicitly 
      from equation~\eqref{eq:determineMl} (assuming that $\beta(p)= 2(1-p)$ 
      is the correct variance decay rate).}
    \label{fig:pSingularityMl}
\end{figure}
The respective estimators' use of samples, $M_\ell$, 
(cf.~Figure~\ref{fig:pSingularityMl}), shows that the low number of initial
samples is not strictly needed for the the adaptive MLMC algorithm, but
for the sake of fair comparisons, we have chosen to use the same
parameter values in both algorithms. 

From the rate estimates of $\alpha$ and $\beta$, we predict the computational
cost of reaching the approximation goal~\eqref{eq:objectiveRecall}
for the respective MLMC algorithms to be
\[
\text{Cost}_{\mathrm{adp}} (\AMLSimple) = \bigO{ \log(\tol)^4 \tol^{-2 }} \quad
\text{and} \quad 
\text{Cost}_{\mathrm{unf}}(\AMLSimple) = \bigO{\tol^{-\frac{1}{1-p}}},
\]
by using the estimate~\eqref{eq:cmlBound}
and Theorem~\ref{thm:cliffeComplexity} respectively.
These predictions fit well with the observed computational runtime for the
respective MLMC algorithms, (cf.~Figure~\ref{fig:pSingularityRuntime}). 
\begin{figure}[h]
    \centering
    \includegraphics[width=0.9\textwidth]{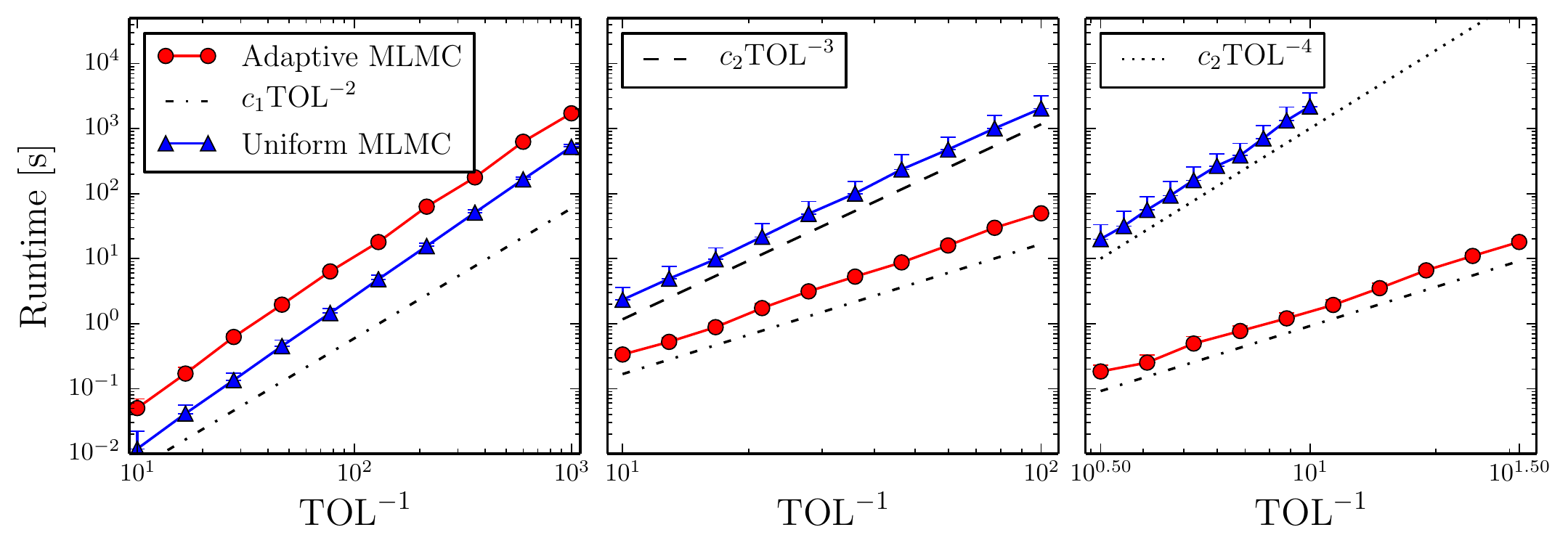}
    \caption{ Average runtime vs. $\tol^{-1}$ for the two MLMC
   algorithms for three singularity exponent values 
   in Example~\ref{ex:mlmcSingularityGbm}.}
    \label{fig:pSingularityRuntime}
\end{figure}
Lastly, we observe that the numerical results are
consistent with both algorithms fulfilling the
goal~\eqref{eq:objectiveRecall} in Figure~\ref{fig:pSingularityError}.
\begin{figure}[h]
    \centering
    \includegraphics[width=0.9\textwidth]{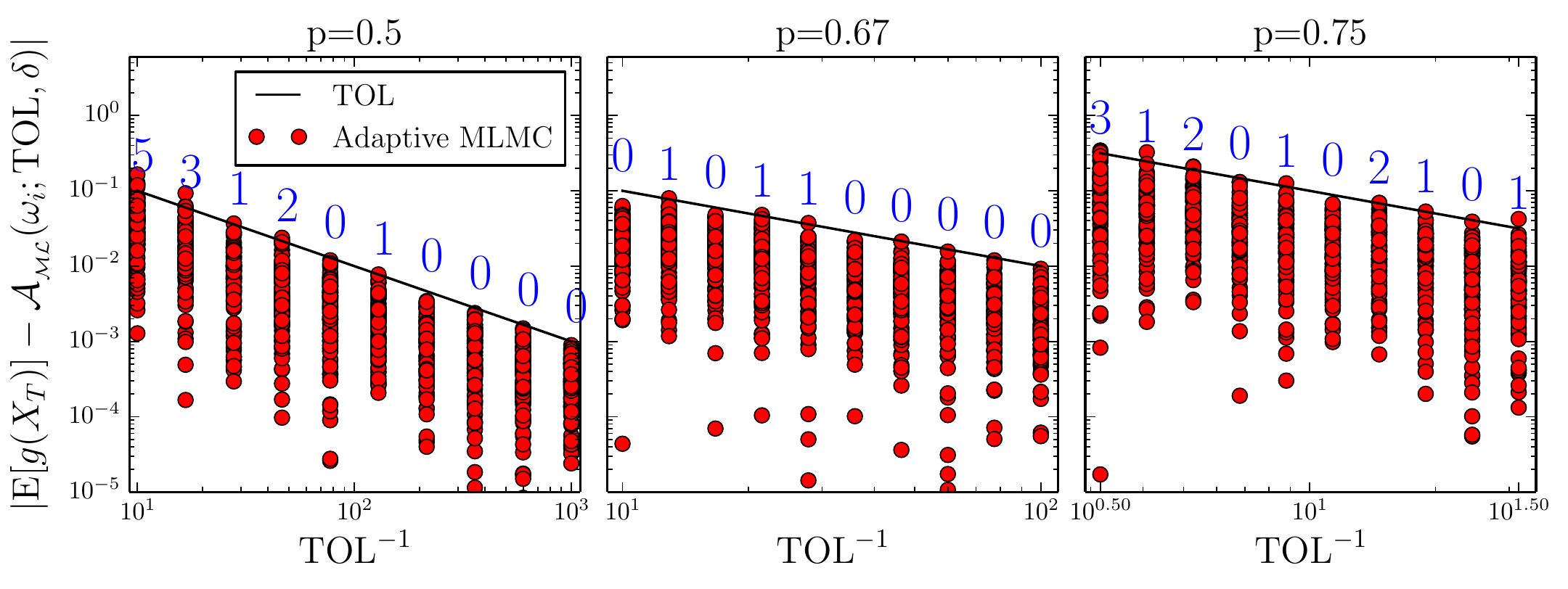}
    \includegraphics[width=0.9\textwidth]{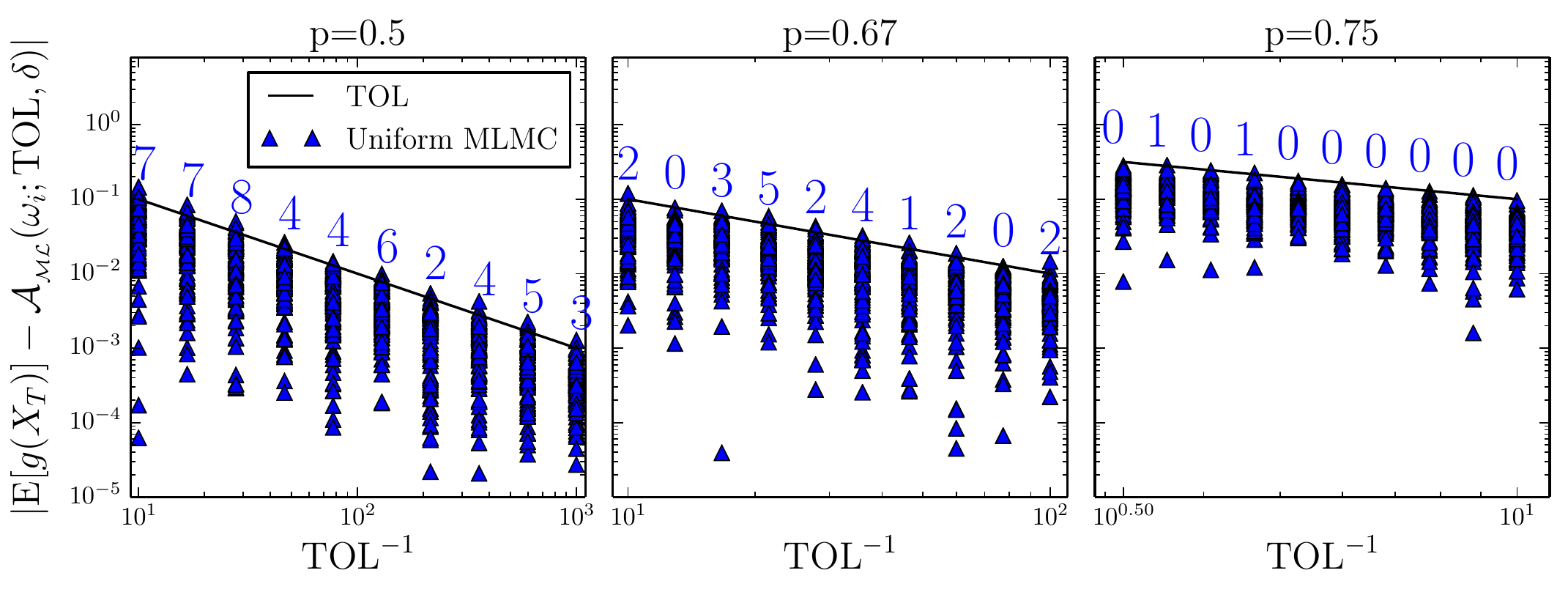}
   \caption{Approximation errors for both of the MLMC algorithms solving
     Example~\ref{ex:mlmcSingularityGbm}. At every $\tol$ value, circles
     and triangles represent the errors from 100 independent
     multilevel estimator realizations of the respective 
     algorithms.}
               \label{fig:pSingularityError}
\end{figure}

\end{example}

\addtocontents{toc}{\protect\setcounter{tocdepth}{1}}

\subsection*{Computer Implementation}

The computer code for all algorithms was
written in Java and used the ``Stochastic Simulation in Java'' library
to sample the random variables in parallel from thread-independent {\bf MRG32k3a} pseudo random
number generators, ~\cite{Lecuyer05}. The experiments were run on
multiple threads on Intel Xeon(R) CPU X5650, $2.67$GHz processors and the 
computer graphics were made using the open source plotting library Matplotlib,
\cite{Hunter07}. 

\addtocontents{toc}{\protect\setcounter{tocdepth}{2}}

\section{Conclusion} \label{sec:conclusion}

We have developed an {\aposteriori}, MSE adaptive Euler--Maruyama time-stepping algorithm
and incorporated it into an MSE adaptive MLMC algorithm. The MSE error expansion presented in
Theorem~\ref{thm:mseExpansion1D} is fundamental to the adaptive algorithm. 
Numerical tests have shown that MSE adaptive time-stepping may
outperform uniform time-stepping, both in the single-level MC
setting and in the MLMC setting, (Examples~\ref{ex:wTSquared}
and~\ref{ex:mlmcSingularityGbm}). 
% A somewhat surprising observation from
% Example~, perhaps, is that, rather extreme low-regularity
% seems to be needed before MSE adaptive MLMC becomes more
% efficient in terms of runtime than uniform MLMC. 
Due to the complexities of implementing adaptive time-stepping, the
numerical examples in this work were restricted to quite simple,
low-regularity SDE problems with singularities in the temporal
coordinate. In the future, we aim to study SDE problems with
low-regularity in the state coordinate (preliminary tests and analysis
do however indicate that then some ad hoc molding of the adaptive
algorithm is required).

Although \aposteriori adaptivity has proven to be a very effective method for 
deterministic differential equations, the use of
information from the future of the numerical solution of the dual problem
makes it a somewhat unnatural method to extend to
\Ito SDE: It can result in numerical solutions that are not
$\FCal_t$-adapted, which consequently may introduce a bias in the
numerical solutions. \cite{Gaines97} provides an example of a failing 
adaptive algorithm for SDE. A rigorous analysis of the convergence properties 
of our developed MSE adaptive algorithm would strengthen the 
theoretical basis of the algorithm further. We leave this for future work.

\newpage 
~\newpage

\appendix

\section{Theoretical Results} 

\subsection{Error Expansion for the MSE in 1D}

In this section, we derive a leading-order error expansion for the
MSE~\eqref{eq:adaptivityGoal} in the 1D setting when the drift and
diffusion coefficients are respectively mappings {of} the form $a: [0,T] \times \R
$ and $b: [0,T] \times \R \to \R$.
%The derived error expansion will be the fundament of our adaptive mesh refinement algorithm,
%developed in Section~\ref{subsec:adaptiveAlgorithm}.
We begin by deriving a representation of the MSE in terms of products
of local errors and weights.

Recalling the definition of the flow map, $\varphi(x,t):=
g(X_T^{x,t})$, and the first variation of the flow map
and the path itself given in Section~\ref{subsec:firstVar},
we use the Mean Value Theorem to deduce that  
\begin{equation}\label{eq:obsDifference}
\begin{split}
\g{X_T}- \gBarXT & =  \varphi(0,x_0) - \varphi(0,\barXT) \\
& = \sum_{n=0}^{N-1}  \varphi(t_{n},\barX_{t_n}) - \varphi(t_{n+1},\barX_{t_{n+1}}) \\
& = \sum_{n=0}^{N-1}  \varphi\parenthesis{t_{n+1},X_{t_{n+1}}^{\barX_{t_n},t_{n}}} - \varphi(t_{n+1},\barX_{t_{n+1}}) \\
& = \sum_{n=0}^{N-1}  \phiX{t_{n+1},\barX_{t_{n+1}} + s_n \Delta e_n } \Delta e_n,
\end{split}
\end{equation}
where the \emph{local error} is given by $\Delta e_n :=  X_{t_{n+1}}^{\barX_{t_n},t_{n}} -\barX_{t_{n+1}}$ and $s_n \in [0,1]$.
\Ito expansion of the local error gives the following representation:
\begin{equation}\label{eq:localErrorTerm}
\begin{split}
&\Delta e_n  = \underbrace{\int_{t_{n}}^{t_{n+1}}  a(t,X_{t}^{\barX_{t_n},t_{n}}) - a(t_n,\barX_{t_n}) \,dt}_{\Delta a_n}  
+ \underbrace{\int_{t_{n}}^{t_{n+1}} b(t,X_{t}^{\barX_{t_n},t_{n}}) - b(t_n, \barX_{t_n}) \,dW_t}_{\Delta b_n}\\
&= \underbrace{\int_{t_{n}}^{t_{n+1}} \int_{t_{n}}^t (a_t + a_x a + \frac{a_{xx}}{2} b^2)(s,X_{s}^{\barX_{t_n},t_{n}}) \,ds \,dt}_{=: \checkDa_{n} } + 
\underbrace{ \int_{t_{n}}^{t_{n+1}} \int_{t_{n}}^t (a_x b)(s,X_{s}^{\barX_{t_n},t_{n}}) \, dW_s \,dt }_{ =: \tildeDa_{n}} \\
& + \underbrace{\int_{t_{n}}^{t_{n+1}} \int_{t_{n}}^t (b_t+b_x a + \frac{b_{xx}}{2} b^2 )(s,X_{s}^{\barX_{t_n},t_{n}}) ds \,dW_t}_{=: \checkDb_n  } 
+  \underbrace{ \int_{t_{n}}^{t_{n+1}} \int_{t_{n}}^t (b_xb)(s,X_{s}^{\barX_{t_n},t_{n}}) dW_s \,dW_t}_{=: \tildeDb_{n} }.
\end{split}
\end{equation}
By equation~\eqref{eq:obsDifference} we may express the MSE by
the following squared sum
\begin{equation*}\label{eq:meanSquareErrorExpansion1}
\begin{split}
&\Exp{\parenthesis{g(X_T) -\gBarXT}^2}  = \Exp{ \left(\sum_{n=0}^{\check
      N-1} \phiX{t_{n+1}, \barX_{t_{n+1}} + s_n \Delta e_n} \Delta e_n \right)^2 }\\
& \qquad \quad = \sum_{n,k=0}^{\check N-1} \Exp{ \phiX{t_{k+1},\barX_{t_{k+1}} + s_k
    \Delta e_k} \phiX{t_{n+1}, \barX_{t_{n+1}} + s_n \Delta e_n} \Delta e_k \Delta e_n  }.
\end{split}
\end{equation*}
This is the first step in deriving the error expansion in
Theorem~\ref{thm:mseExpansion1D}. The remaining steps follow in
the proof below. 

\begin{proof}[Proof of Theorem~\ref{thm:mseExpansion1D}]

The main tools used in proving this theorem are Taylor and It{\^o}--Taylor expansions, \Ito isometry, and truncation of
higher order terms. For errors attributed to the leading-order local error term, $\tildeDb_n$, 
(cf.~equation~\eqref{eq:localErrorTerm}), we do detailed 
calculations, and the remainder is bounded by stated higher order terms.

We begin by noting that under the assumptions in Theorem~\ref{thm:mseExpansion1D}
Lemmas~\ref{lem:existUniquePath} and~\ref{lem:numSolConv} respectively verify
then the existence
and uniqueness of the solution of the SDE $X$ and the numerical solution $\barX$, 
and provide higher order moment bounds for both. Furthermore, due to the assumption
of the mesh points being stopping times for which $t_n \in \FCal_{t_{n-1}}$ for all $n$, 
it follows also that the numerical solution is adapted to the filtration, i.e., $\barX_{t_n} \in \FCal_{t_n}$
for all $n$.

We further need to extend the flow map and the first variation 
notation from Section~\ref{subsec:firstVar}. Let   
$\barX^{x, t_k}_{t_n}$ for $n\ge k$ denote the numerical solution of the Euler--Maruyama  
scheme
\begin{equation}
\barX_{t_{j+1}}^{x,t_k} 
=  \barX_{t_{j}}^{x,t_k} + a(t_j, \barX_{t_j}^{x,t_k})  \Delta t_j 
+ b(t_j, \barX_{t_j}^{x,t_k}) \Delta W_j, \quad j \ge k,
\end{equation}
with initial condition $X_{t_k} = x$. The first variation of $\barX^{x, t_k}_{t_n}$ 
is defined by $\partial_x \barX_{t_n}^{x,t_k}$.
Provided that $\Exp{|x|^{2p}}< \infty$ for all $p \in \N$, $x \in \FCal_{t_k}$ 
and provided the assumptions of Lemma~\ref{lem:numSolConv} hold, 
it is straightforward to extend the proof of the lemma  
to verify that $(\barX^{x, t_k}, \partial_x \barX^{x,t_k})$ 
converges strongly to $(X^{x, t_k}, \partial_x X^{x,t_k})$ for $t \in [t_k, T]$, 
\begin{equation*}\label{eq:strongConvPath2}
\begin{split}
\max_{k \leq n \leq \check{N} }  \parenthesis{  \parenthesis{\Exp{ \abs{\barX_{t_n}^{x,t_k} - X_{t_n}^{x,t_k}}^{2p}}}^{1/2p} }
& \leq C \check{N}^{-1/2}, \quad \forall p \in \N \\
\max_{k \leq n \leq \check{N}  }  \parenthesis{ \parenthesis{ \Exp{ \abs{\partial_{x} \barX_{t_n}^{x,t_k} - \partial_{x} X_{t_n}^{x,t_k}}^{2p} } }^{1/2p} }
& \leq C \check{N}^{-1/2}, \quad \forall p \in \N 
\end{split}
\end{equation*}
and
\begin{equation}
\max_{ k \leq n \leq \check{N} } 
\parenthesis{ \max \parenthesis{ \Exp{ \abs{\barX_{t_n}^{x,t_k}}^{2p}}, \Exp{ \abs{\partial_x\barX_{t_n}^{x,t_k}}^{2p}  } } } < \infty, 
\quad \forall p \in \N.
\end{equation}
In addition to this, we will also make use of moment bounds for the second
and third variation of the flow map in the proof, i.e.,  $\phiXX{t,x}$
and $\phiXXX{t,x}$. The second 
variation is described in Section~\ref{subsec:higherOrderVariations},
where it is shown in Lemma~\ref{lem:existenceUniquenessHigherVariations} that provided that $x\in \FCal_t$ and $\Exp{|x|^{2p}}<\infty$ 
for all $p \in \N$, then
\[
\max \parenthesis{ \Exp{ \abs{\phiXX{t, x}}^{2p} }, \Exp{ \abs{\phiXXX{t, x}}^{2p} }, \Exp{ \abs{\phiXXXX{t, x}}^{2p} }} < \infty, \quad \forall p \in \N.
\]

Considering the MSE error contribution from the leading order local
error terms $\tildeDb_n$, i.e., 
\begin{equation}\label{eq:boundTildeDb}
\Exp{ \phiX{t_{k+1}, \barX_{t_{k+1}} + s_k \Delta e_k  } \phiX{t_{n+1},\barX_{t_{n+1}} + s_n \Delta e_n} \tildeDb_k \tildeDb_{n} },
\end{equation}
we have for $k=n$,
\begin{align*}\label{eq:dbTaylor1}
& \Exp{ \left(\phiX{t_{n+1}, \barX_{t_{n+1}} } + \phiXX{t_{n+1}, \barX_{t_{n+1}} + \hat s_n \Delta e_n  } s_n\Delta e_n \right)^2 \tildeDb_{n}^2 } \\
 = & \Exp{ \phiX{t_{n+1}, \barX_{t_{n+1}} }^2   \tildeDb_{n}^2  +  \littleO{\Delta t_n^2}}.
\end{align*}
The above $\littleO{\Delta t_n^2}$ follows from Young's and H{\"o}lder's inequalities,
\begin{equation}\label{eq:boundDiagonalTerms}
\begin{split}
& \Exp{ 2\phiX{t_{n+1}, \barX_{t_{n+1}}} \phiXX{t_{n+1},\barX_{t_{n+1}} + \hat s_n \Delta e_n  } s_n \Delta e_n \tildeDb_{n}^2 }\\
& \leq C \parenthesis{ \Exp{\parenthesis{   \phiX{t_{n+1}, \barX_{t_{n+1}}} \phiXX{t_{n+1}, \barX_{t_{n+1}} 
+ \hat s_n \Delta e_n }}^2 \Delta t_n^3} +  \Exp{ \frac{\Delta e_n^2 \tildeDb_{n}^4}{\Delta t_n^3} } } \\
&  \leq C \Bigg(  \Exp{\Exp{ \parenthesis{   \phiX{t_{n+1}, \barX_{t_{n+1}} } \phiXX{t_{n+1}, \barX_{n+1} + \hat s_n \Delta e_n  }}^2 \, \Big| \FCal_{t_n} } \Delta t_n^3}  \\
&  \quad +  \Exp{ \frac{\checkDa_n^2 \tildeDb_{n}^4}{\Delta t_n^3}}   
+  \Exp{ \frac{\tildeDa_n^2 \tildeDb_{n}^4}{\Delta t_n^3}} 
    +  \Exp{ \frac{\checkDb_n^2 \tildeDb_{n}^4}{\Delta t_n^3} }  + \Exp{ \frac{\tildeDb_{n}^6}{\Delta t_n^{3} }}  \Bigg)\\
& \leq C \Bigg\{ \Exp{\Delta t_n^3} + \Bigg(\sqrt{\Exp{\Exp{\checkDa_n^4 | \FCal_{t_n}} \frac{1}{\Delta t_n}}} 
+ \sqrt{\Exp{\Exp{\tildeDa_n^4 | \FCal_{t_n}} \frac{1}{\Delta t_n}}} \\
&  \quad  + \sqrt{\Exp{\Exp{\checkDb_n^4 | \FCal_{t_n}} \frac{1}{\Delta t_n}}} + 
\sqrt{\Exp{\Exp{\tildeDb_n^4 | \FCal_{t_n}} \frac{1}{\Delta t_n}}}\Bigg) \sqrt{\Exp{\Exp{\tildeDb_n^8 | \FCal_{t_n}} \frac{1}{\Delta t_n^5}}} \Bigg\}\\
&  =\Exp{ o(\Delta t_n^2)} 
\end{split}
\end{equation}
where the last inequality is derived by applying the moment bounds for multiple \Ito integrals described in~\cite[Lemma 5.7.5]{Kloeden92}
and under the assumptions (R.1), (R.2), (M.1), (M.2) and (M.3). This yields
\begin{equation}\label{eq:multiItoBounds}
 \begin{split}
  \Exp{\checkDa_n^4|\FCal_{t_n}}    &    \leq C \Exp{\sup_{s \in [t_n,t_{n+1}) } 
          \abs{a_t + a_x a + \frac{a_{xx}}{2} b^2}^4(s,X_{s}^{\barX_{t_n},t_{n}}) \, \Big| \, \FCal_{t_n} }  \Delta t_n^8,  \\  
  \Exp{\tildeDa_n^4 | \FCal_{t_n}}  &    \leq   C \Exp{ \sup_{s \in [t_n,t_{n+1}) } \abs{a_x b}^4(s,X_{s}^{\barX_{t_n},t_{n}})\, \Big| \,\FCal_{t_n}} \Delta t_n^6,\\
  \Exp{\checkDb_n^4| \FCal_{t_n}}   &    \leq C \Exp{ \sup_{s \in [t_n,t_{n+1})} \abs{b_t + b_x a + \frac{b_{xx}}{2} b^2 }^4(s,X_{s}^{\barX_{t_n},t_{n}}) \, \Big| \,\FCal_{t_n}} 
\Delta t_n^6, \\ 
\Exp{\tildeDb_n^{4} | \FCal_{t_n}}   &    \leq C \Exp{ \sup_{s \in [t_n,t_{n+1})  } \abs{b_x b}^{4}(s,X_{s}^{\barX_{t_n},t_{n}})\, \Big| \,\FCal_{t_n}} \Delta t_n^4, \\
  \Exp{\tildeDb_n^{8} | \FCal_{t_n}} &    \leq C  \Exp{ \sup_{s \in [t_n,t_{n+1})} \abs{b_x b}^{8}(s,X_{s}^{\barX_{t_n},t_{n}})\, \Big| \,\FCal_{t_n}} \Delta t_n^8.
 \end{split}
\end{equation}
And by similar reasoning, 
\[
\Exp{ \phiXX{\barX_{t_{n+1}} + \hat s_n \Delta e_n ,t_{n+1} }^2s_n^2  \Delta e_n^2 \tildeDb_{n}^2 } \leq C \Exp{\Delta t_n^4}.
\]
For achieving independence between forward paths and dual solutions in the expectations,
%% we introduce the following notation for both the numerical and the exact process with 
%% the contribution from the last interval Wiener increment removed,
%% \begin{equation}\label{eq:cutLastElementNumerical}
%%  \hatBarX_{t_{n+1} }:=  \barX_{t_n} + a(t_n,\barX_{t_n}) \Delta t_n,
%% \end{equation}
%% and
%% \begin{equation}\label{eq:cutLastElementExact}
%%  \hatX_{t_{n+1}} :=  X_{t_n} + \int_{t_n}^{t_{n+1}} a(, X_s) ds.
%% \end{equation}
%% Then,
an It{\^o}--Taylor expansion of $\varphi_x$ leads to the equality
\begin{equation*}\label{eq:dbTaylor2}
 \Exp{ \phiX{t_{n+1}, \barX_{t_{n+1}} }^2 \tildeDb_{n}^2 } =  
\Exp{ \phiX{t_{n+1},\barX_{t_{n}} }^2  \tildeDb_{n}^2  +  \littleO{\Delta t_n^{2}} }.
\end{equation*}
Introducing the null set completed $\sigma-$algebra 
\begin{equation*}\label{eq:FnDef}
\hatF^{n} = \overline{\sigma \left( \sigma( \{ W_{s}\}_{ 0 \leq s \leq
      t_n})  \lor \sigma( \{ W_s-W_{t_{n+1}}\}_{ t_{n+1} \leq s \leq
      T} ) \right) \lor \sigma(X_0)},
\end{equation*}
we observe that
$\phiX{t_{n+1}, \barX_{t_n}}^2$ is $\hatF^{n}$ measurable by
construction, (cf.~\cite[App.~B]{Oksendal98}).
%and that
%\begin{equation}\label{eq:hatDbCondition}
%\tildeDb_n|\hatF^{n} = \int_{t_{n}}^{t_{n+1}} \int_{t_{n}}^t (b_xb)(s,X_{s}^{\barX_{n},t_{n}} ) dW_s \,dW_t\Big| \barX_{t_n}
%\end{equation}
Moreover, by conditional expectation,
\[
\begin{split}
\Exp{ \phiX{t_{n+1}, \barX_{t_{n}}}^2   \tildeDb_{n}^2  }  & = 
\Exp{ \phiX{t_{n+1}, \barX_{t_{n}} }^2 \Exp{ \tildeDb_{n}^2 |\hatF^{n} }  }\\
& = \Exp{  \phiX{t_{n+1}, \barX_{t_{n}} }^2 (b_xb)^2(t_n,\barX_{t_n}) \frac{\Delta t_n^2}{2}   + \littleO{\Delta t_n^{2} } },
\end{split}
\]
where the last equality follows from using It{\^o}'s formula,
\[
\begin{split}
(b_x b)^2 (t,X_{t}^{\barX_{t_n},t_{n}}) &= (b_x b)^2 (t_n,\barX_{t_n}) + \int_{t_n}^t \parenthesis{\Big(\partial_t + a \partial_x + \frac{b^2}{2} \partial_x^2 \Big)(b_x b)^2 }(s,X_{s}^{\barX_{t_n},t_{n}}) \, ds \\
& \quad + \int_{t_n}^t  \parenthesis{b \partial_x (b_x b)^2 }(s,X_{s}^{\barX_{t_n},t_{n}}) \, dW_s, \quad  t\in [t_n, t_{n+1}),
\end{split}
\]
to derive that 
\[
\begin{split}
\Exp{ \tildeDb_{n}^2 |\hatF^{n} } & = \Exp{\left(\int_{t_{n}}^{t_{n+1}} \int_{t_{n}}^t (b_xb)(s,X_s^{\barX_{t_n},t_n}) dW_s \,dW_t \right)^2 \Big | \barX_{t_n} }\\
 & =  \frac{(b_xb)^2( t_{n},\barX_{t_n} )}{2} \Delta t_n^2 + \littleO{\Delta t_{n}^{2}}.
\end{split}
\]
Here, the higher order $\littleO{\Delta t_{n}^{2}}$ terms are bounded
in a similar fashion as the terms in
inequality~\eqref{eq:boundDiagonalTerms}, by using~\cite[Lemma 5.7.5]{Kloeden92}.

%%%
%%% CROSS TERM PART
%%%

For the terms in~\eqref{eq:boundTildeDb} for which $k <n$, we will show that
\begin{equation}\label{eq:boundDb2}
\begin{split}
&
\sum_{k,n=0}^{\check{N}-1} \Exp{ \phiX{t_{k+1}, \barX_{t_{k+1}} + s_k \Delta e_k  } \phiX{t_{n+1}, \barX_{t_{n+1}} + s_n \Delta e_n} \tildeDb_k \tildeDb_{n} } 
= \sum_{n=0}^{\check{N}-1}
\Exp{ \littleO{\Delta t_n^2 }},
\end{split}
\end{equation}
which means that the contribution to the MSE from these terms is
negligible to leading order. For the use in later expansions, let us first
observe by use of the chain rule that for any $y \in \FCal_{t_n}$ with
bounded second moment,
\[
\begin{split}
\phiX{t_{k+1}, y  }  &= g'(X_T^{y, t_{k+1}} ) \partial_x X_{T}^{y ,t_{k+1}}\\
& = g'(X_T^{\barX_{t_{k+1}} + s_m \Delta e_k, t_{k+1}} ) 
\partial_x X_{T}^{X_{t_{n+1}}^{y,t_{k+1}}  ,t_{n+1}}
\partial_x X_{t_{n+1}}^{y,t_{k+1}}\\
&= \phiX{t_{n+1}, X_{t_{n+1}}^{y,t_{k+1}}  } \partial_x X_{t_{n+1}}^{y,t_{k+1}},
\end{split}
\] 
and that
\begin{multline*}
\partial_x  X_{t_{n+1}}^{\barX_{t_{k+1}} + s_k \Delta e_k, t_{k+1}} = 
\partial_x  X_{t_{n}}^{\barX_{t_{k+1}} + s_k \Delta e_k, t_{k+1}}  \\
+ \int_{t_n}^{t_{n+1}} a_x(s,X_{s}^{\barX_{t_{k+1}} + s_k \Delta e_k, t_{k+1}}) \partial_x X_{s}^{\barX_{t_{k+1}} + s_k \Delta e_k, t_{k+1}} ds \\
+\int_{t_n}^{t_{n+1}} b_x(s,X_{s}^{\barX_{t_{k+1}} + s_k \Delta e_k, t_{k+1}}) \partial_x X_{s}^{\barX_{t_{k+1}} + s_k \Delta e_k, t_{k+1}} dW_s.
\end{multline*}
We next introduce the $\sigma$-algebra
\[
\hatF^{k,n} := \overline{\sigma( \{ W_{s }\}_{ 0 \leq s \leq t_k})  \lor \sigma( \{ W_s-W_{t_{k+1} }\}_{ t_{k+1} \leq s \leq t_n} )
\lor \sigma( \{ W_s-W_{t_{n+1} }\}_{ t_{n+1} \leq s \leq T} ) \lor \sigma(X_0)},
\] 
and It{\^o}--Taylor expand the $\varphi_x$ functions in~\eqref{eq:boundDb2} 
about center points that are $\hatF^{k,n}$-measurable:
\begin{multline}\label{eq:crossTerm1}
\phiX{t_{k+1}, \barX_{t_{k+1}} + s_k \Delta e_k  } =
\phiX{t_{n+1}, X_{t_{n+1}}^{ \barX_{t_{k+1}} + s_k \Delta e_k,t_{k+1}  }} 
\partial_x X_{t_{n+1}}^{\barX_{t_{k+1}} + s_k \Delta e_k, t_{k+1}} \\
=  \Bigg[\phiX{ t_{n+1}, X_{t_{n}}^{\barX_{t_{k}},t_{k+1}}} + \phiXX{t_{n+1}, X_{t_{n}}^{\barX_{t_{k}},t_{k+1}}} 
\parenthesis{X_{t_{n+1}}^{ \barX_{t_{k+1}} + s_k \Delta e_k, t_{k+1}  } - X_{t_{n}}^{\barX_{t_{k}},t_{k+1}}} \\
+ \phiXXX{t_{n+1}, X_{t_{n}}^{\barX_{t_{k}},t_{k+1}}} 
\frac{\parenthesis{X_{t_{n+1}}^{ \barX_{t_{k+1}} + s_k \Delta e_k, t_{k+1}  } - X_{t_{n}}^{\barX_{t_{k}},t_{k+1}}}^2}{2} \\
 + \phiXXXX{t_{n+1}, (1-\check{s}_n) X_{t_{n}}^{\barX_{t_{k}},t_{k+1}} + \check{s}_n X_{t_{n+1}}^{ \barX_{t_{k+1}} + s_k \Delta e_k, t_{k+1}  }}\\
\times \frac{(X_{t_{n+1}}^{ \barX_{t_{k+1}} + s_k \Delta e_k, t_{k+1}  } - X_{t_{n}}^{\barX_{t_{k}},t_{k+1}})^2}{2}\Bigg]\\
\times\Bigg[ \partial_x X_{t_{n}}^{\barX_{t_{k}} , t_{k+1}}
+ \partial_{xx} X_{t_{n}}^{\barX_{t_{k}} , t_{k+1}} (a(t_k,\barX_{t_k}) \Delta t_k + b(t_k,\barX_{t_k}) \Delta W_k + s_k \Delta e_k)\\
+ \partial_{xxx} X_{t_{n}}^{\barX_{t_{k}} + \grave{s}_k (a(t_k,\barX_{t_k}) \Delta t_k + (b(t_k,\barX_{t_k}) \Delta W_k + s_k \Delta e_k) , t_{k+1}}\\
\times \frac{(a(t_k,\barX_{t_k}) \Delta t_k +b(t_k,\barX_{t_k}) \Delta W_k + s_k \Delta e_k)^2}{2}\\
+ \int_{t_n}^{t_{n+1}} a_x(s,X_{s}^{\barX_{t_{k+1}} + s_k \Delta e_k, t_{k+1}}) \partial_x X_{s}^{\barX_{t_{k+1}} + s_k \Delta e_k, t_{k+1}} ds \\
+ \int_{t_n}^{t_{n+1}} b_x(s,X_{s}^{\barX_{t_{k+1}} + s_k \Delta e_k, t_{k+1}}) \partial_x X_{s}^{\barX_{t_{k+1}} + s_k \Delta e_k, t_{k+1}} dW_s
\Bigg],
\end{multline}
where 
\begin{multline*}
X_{t_{n+1}}^{ \barX_{t_{k+1}} + s_k \Delta e_k, t_{k+1}  } - X_{t_{n}}^{\barX_{t_{k}},t_{k+1}}\\
= \int_{t_n}^{t_{n+1}} a(s,X_{s}^{\barX_{t_{k+1}} + s_k \Delta e_k, t_{k+1}})  ds 
+\int_{t_n}^{t_{n+1}} b(s,X_{s}^{\barX_{t_{k+1}} + s_k \Delta e_k, t_{k+1}}) dW_s \\
+ \partial_x X_{t_{n}}^{\barX_{t_{k}} + \tilde{s}_k(a(t_k,\barX_{t_k}) \Delta t_k+ b(t_k, \barX_{t_k}) \Delta W_k + s_k \Delta e_k ) ,t_{k+1}} (a(t_k,\barX_{t_k}) \Delta t_k+ b(t_k, \barX_{t_k}) \Delta W_k + s_k \Delta e_k ),
\end{multline*}
and
\begin{multline}\label{eq:crossTerm2}
\phiX{t_{n+1}, \barX_{t_{n+ 1}} + s_n \Delta e_n }  =  \phiX{t_{n+1}, \barX_{t_{n}}^{\barX_{t_{k}}, t_{k+1} }}\\
+ \phiXX{ t_{n+1}, \barX_{t_{n}}^{\barX_{t_{k}}, t_{k+1} } }\Delta \nu_{k,n}
+ \phiXXX{ t_{n+1}, \barX_{n}^{\barX_{k}, t_{k+1} } }\frac{\Delta \nu_{k,n}^2}{2}\\
+ \phiXXXX{ t_{n+1}, (1-\acute{s}_n)\barX_{t_n}^{\barX_{t_k}, t_{k+1} } + \acute{s}_n(\barX_{t_{n+1}} +s_n \Delta e_n)}
\frac{\Delta \nu_{k,n}^3}{6},
\end{multline}
with 
\begin{multline*}
\Delta \nu_{k,n} :=  a( t_{n}, \barX_{t_n}) \Delta t_n +  b( t_{n}, \barX_{t_n}) \Delta W_n +s_n \Delta e_n\\
+ \partial_x \barX_{t_{n}}^{\barX_{t_{k}}+\hat{s}_k( a(t_{k},\barX_{t_{k}})\Delta t_k + b(t_{k},\barX_{t_{k}})\Delta W_k),t_{k+1}} 
(a(t_k,\barX_{t_k}) \Delta t_k+ b(t_k, \barX_{t_k}) \Delta W_k + s_k \Delta e_k ).
\end{multline*}
Plugging the expansions~\eqref{eq:crossTerm1} and~\eqref{eq:crossTerm2} 
into the expectation
\begin{equation*}\label{eq:crossTermFull}
\Exp{ \phiX{t_{k+1}, \barX_{k+1} + s_k \Delta e_k  } \phiX{t_{n+1}, \barX_{n+1} + s_n \Delta e_n} \tildeDb_k \tildeDb_{n} },  
\end{equation*}
the summands in the resulting expression that
only contains products of the first variations vanishes,
\[
\begin{split}
&\Exp{\phiX{ t_{n+1}, X_{t_{n}}^{\barX_{t_{k}},t_{k+1}}}
    \partial_x X_{t_{n}}^{\barX_{t_{k}} , t_{k+1}} \phiX{
      t_{n+1}, \barX_{t_{n}}^{\barX_{t_{k+1}}, t_{k+1} }} \tildeDb_k
    \tildeDb_{n} }\\ & = \Exp{ \Exp{ \tildeDb_{n} \tildeDb_k |
      \hatF^{k,n} } \phiX{ t_{n+1},
      X_{t_{n}}^{\barX_{t_{k}},t_{k+1}}} \partial_x
    X_{t_{n}}^{\barX_{t_{k}} , t_{k+1}} \phiX{ t_{n+1},
      \barX_{t_{n}}^{\barX_{t_{k}}, t_{k+1} }} } = 0.
\end{split}
\]
One can further deduce that all of the the summands 
in which the product of multiple \Ito integrals
$\tildeDb_k$ and $\tildeDb_{n}$ are multiplied only with one 
additional \Ito integral of first-order vanish by
using the fact that the inner product of the resulting multiple \Ito integrals 
is zero, cf.~\cite[Lemma 5.7.2]{Kloeden92}, and by 
separating the first and second variations from
the \Ito integrals by taking a conditional expectation with respect to the
suitable filtration. We illustrate this with a couple of examples,
\begin{multline*}
\mathrm{E}\Bigg[ \phiX{ t_{n+1}, X_{t_{n}}^{\barX_{t_{k}},t_{k+1}}} 
\partial_{xx} X_{t_{n}}^{\barX_{t_{k}} , t_{k+1}} b(t_k,\barX_{t_k}) \Delta W_k    
\phiX{ t_{n+1}, \barX_{t_{n}}^{\barX_{t_{k}}, t_{k+1} }} \tildeDb_k \tildeDb_{n}  \Bigg]\\
 = \mathrm{E}\Bigg[ \phiX{ t_{n+1}, X_{t_{n}}^{\barX_{t_{k}},t_{k+1}}} 
\partial_{xx} X_{t_{n}}^{\barX_{t_{k}} , t_{k+1}} b(t_k,\barX_{t_k}) \Delta W_k    
\phiX{ t_{n+1}, \barX_{t_{n}}^{\barX_{t_{k}}, t_{k+1} }} \tildeDb_k
\\
\times \Exp{\tildeDb_{n} | \hatF^n}  \Bigg]=0,
\end{multline*}
and
\begin{multline*}
\mathrm{E}\Bigg[ \phiX{ t_{n+1}, X_{t_{n}}^{\barX_{t_{k}},t_{k+1}}} 
\partial_{x} X_{t_{n}}^{\barX_{t_{k}} , t_{k+1}} b( t_{n}, \barX_{t_n}) \Delta W_n    
\phiX{ t_{n+1}, \barX_{t_{n}}^{\barX_{t_{k}}, t_{k+1} }}  \tildeDb_k \tildeDb_{n}  \Bigg]\\
 = \mathrm{E}\Bigg[ \phiX{ t_{n+1}, X_{t_{n+1}}^{\barX_{t_{k}},t_{k+1}}} 
\phiX{ t_{n+1}, \barX_{t_{n}}^{\barX_{t_{k}}, t_{k+1} }} \tildeDb_k  b( t_{n}, \barX_{t_n}) 
\Exp{\tildeDb_{n}  \Delta W_n | \hatF^n}  \Bigg]=0.
\end{multline*}
From these observations, assumption (M.3), inequality~\eqref{eq:multiItoBounds},
and, when necessary, additional expansions of integrands to render the
leading order integrand either $\hatF^{k}$- or $\hatF^{n}$-measurable and thereby sharpen the bounds (an example of
such an expansion is
\begin{align*}
\tildeDb_n &= \int_{t_{n}}^{t_{n+1}} \int_{t_{n}}^t (b_xb)(s,X_{s}^{\barX_{t_n},t_{n}}) dW_s \,dW_t\\
&= \int_{t_{n}}^{t_{n+1}} \int_{t_{n}}^t (b_xb)\parenthesis{s,X_{s}^{\barX_{t_n}^{\barX_{t_{k}},t_{k+1}},t_{n}}} dW_s \,dW_t + \text{h.o.t.}).
\end{align*}
We derive after a laborious computation which we will not include here that
\begin{multline*}
\abs{\Exp{\phiX{t_{k+1}, \barX_{t_{k+1}} + s_k \Delta e_k  } \phiX{t_{n+1}, \barX_{t_{n+1}} + s_n \Delta e_n} \tildeDb_k \tildeDb_{n} }}
 \leq C \check{N}^{-3/2} \sqrt{\Exp{\Delta t_{k}^2} \Exp{\Delta t_{n}^2}}.
\end{multline*}
This further implies that
\[
\begin{split}
&\sum_{k,n=0, k\neqq n}^{\check{N}-1} \Exp{ \phiX{t_{k+1}, \barX_{t_{k+1}} + s_k \Delta e_k  } \phiX{t_{n+1}, \barX_{t_{n+1}} + s_n \Delta e_n} \tildeDb_k \tildeDb_{n} }\\
%& m \neqq n \\ % \sum_{m,n=0, m \neqq n}^{\check{N}-1} \\ 
&\leq C \check{N}^{-3/2} \sum_{k,n=0, k\neqq n}^{\check{N}-1} \sqrt{\Exp{\Delta t_{k}^2} \Exp{\Delta t_{n}^2}} \\
&\leq C \check{N}^{-3/2} \parenthesis{\sum_{n=0}^{\check{N}-1} \sqrt{\Exp{\Delta t_{n}^2}}}^2 \\
&\leq C \check{N}^{-1/2 }\sum_{n=0}^{\check{N}-1} \Exp{\Delta t_{n}^2},
\end{split}
\]
such that inequality~\eqref{eq:boundDb2} holds.

So far, we have shown that 
\begin{multline}\label{eq:boundTildeDb2}
%\begin{split}
 \Exp{ \parenthesis{ \sum_{n=0}^{N-1} \phiX{t_{n+1},\barX_{t_{n+1}} + s_n \Delta e_n } \tildeDb_{n} }^2 } \\
 = \Exp{ \sum_{n=0}^{N-1} \phiX{t_{n+1}, \barX_{t_{n}}}^2  \frac{(b_x b)^2}{2}(t_n,\barX_{t_n})  \Delta t_n^2  + \littleO{\Delta t_n^2} }.
%& = \Exp{ \sum_{n=0}^{N-1} \phiX{t_{n+1}, \barX_{t_{n+1}} }^2 \frac{(b_x b)^2}{2}(t_n, \barX_{t_n}) \Delta t_n^2  + \littleO{\Delta t_n^2} }.
%\end{split}
\end{multline}
The MSE contribution from the other local error terms, $\checkDa_n,
\tildeDa_n$ and $\checkDb_n$, can also be bounded using the above
approach with It{\^o}--Taylor expansions, $\hatF^{m,n}$-conditioning
and \Ito isometries. This yields that
\begin{equation}\label{eq:boundCheckDa}
\begin{split}
& \Exp{ \phiX{t_{k+1}, \barX_{t_{k+1}} + s_k \Delta e_k }  \phiX{t_{n+1}, \barX_{t_{n+1}} + s_n \Delta e_n } \checkDa_{k} \checkDa_{n}} \\
& = \mathrm{E} \Bigg[ \phiX{\barX_{t_{k}},t_{k}} \phiX{t_{n},\barX_{t_{n}}} \Big( \frac{a_t+ a_x a + a_{xx} b^2/2 }{2}\Big)(t_{k}, \barX_{t_k}) \times\\
& \qquad \Big( \frac{ a_t+ a_x a + a_{xx} b^2/2  }{2}\Big) (t_{n}, \barX_{t_n})  \Delta t_{k}^2 \Delta t_{n}^2  +\littleO{\Delta t_k^2 \Delta t_n^2}\Bigg],
\end{split}
\end{equation}
\begin{equation*}\label{eq:boundTildeDa*} 
\begin{split}
& \Exp{ \phiX{t_{k+1}, \barX_{t_{k+1}} + s_k \Delta e_k }  \phiX{t_{n+1}, \barX_{t_{n+1}} + s_n \Delta e_n } \tildeDa_{k} \tildeDa_{n}} \\
 & \qquad = \begin{cases}
		\Exp{ \phiX{t_{n}, \barX_{t_{n}} }^2 \frac{ (a_x b)^2 }{2}(t_n, \barX_{t_n}) \Delta t_n^3  + \littleO{\Delta t_n^3} }, & \text{if } k=n,\\
	        \bigO{\check{N}^{-3/2}\parenthesis{\Exp{\Delta t_k^3} \Exp{\Delta t_n^3} }^{1/2}}, & \text{if } k \neqq n,
	    \end{cases}
\end{split}
\end{equation*}
and 
\begin{equation*}\label{eq:boundCheckDb*}
\begin{split}
& \Exp{ \phiX{t_{k+1}, \barX_{t_{k+1}} + s_k \Delta e_k }  \phiX{t_{n+1}, \barX_{t_{n+1}} + s_n \Delta e_n } \checkDb_{k} \checkDb_{n}} \\
& \qquad = \begin{cases}
		\Exp{ \phiX{t_{n}, \barX_{t_{n}}}^2 \frac{ (b_t + b_x a + b_{xx}b^2/2)^2 }{3}(t_n, \barX_{t_n}) \Delta t_n^3  + \littleO{\Delta t_n^3} }, & \text{if } k=n,\\
		\bigO{\check{N}^{-3/2}\parenthesis{\Exp{\Delta t_k^3} \Exp{\Delta t_n^3} }^{1/2}}, & \text{if } k \neqq n.
	    \end{cases}
\end{split}
\end{equation*}
Moreover, conservative bounds for error contributions involving
products of different local error terms, e.g., $\checkDa_k
\tildeDb_n$, can be induced from the above bounds and H\"older's
inequality. For example,
\[
\begin{split}
&\abs{\Exp{ \sum_{k,n=0}^{\check{N}-1}  \phiX{t_{k+1}, \barX_{t_{k+1}} + s_k \Delta e_k }  \checkDa_{k} \phiX{t_{n+1}, \barX_{t_{n+1}} + s_n \Delta e_n }  \tildeDb_{n}} }\\
& = \abs{\Exp{ \parenthesis{ \sum_{k=0}^{\check{N}-1}\phiX{t_{k+1}, \barX_{t_{k+1}} + s_k \Delta e_k } \checkDa_{k}}
\parenthesis{ \sum_{k=0}^{\check{N}-1} \phiX{t_{n+1}, \barX_{t_{n+1}} + s_n \Delta e_n }  \tildeDb_{n}} }} \\
& \leq \sqrt{\Exp{ \parenthesis{
      \sum_{k=0}^{\check{N}-1}\phiX{t_{k+1}, \barX_{t_{k+1}} + s_k
        \Delta e_k } \checkDa_{k}}^2}
\Exp{\parenthesis{ \sum_{n=0}^{\check{N}-1} \phiX{t_{n+1}, \barX_{t_{n+1}} + s_n \Delta e_n }  \tildeDb_{n}}^2 }}\\
& = \bigO{\check{N}^{-1/2} \sum_{n=0}^{\check{N}-1} \Exp{\Delta t_n^2}}.
\end{split}
\]

%% The proof is completed in two replacement steps. First, we replace the
%% first variation of the exact path, $\phiX{t_{n+1},\barX_{t_{n+1}}}$,
%% in~\eqref{eq:boundTildeDb2} with the first variation of the numerical
%% solution $\phiBarX{n+1} = g'(\barX_T) \partial_x
%% \barX_T^{\barX_{t_{n+1}},t_{n+1}}$.  Under the regularity assumed in
%% this theorem, the replacement is possible without introducing
%% additional leading order error terms,
%% \begin{equation}\label{eq:phiBarErrorBound}
%% \begin{split}
%% & \Exp{|\phiBarX{n+1} -\phiX{t_{n+1},\barX_{t_{n+1}}}| }  =
%% \Exp{ \abs{ g'(\barX_T) \partial_x \barX_T^{\barX_{t_{n+1}},t_{n+1}}  - g'(X_T^{\barX_{t_{n+1}},t_{n+1}}) \partial_x X_T^{\barX_{t_{n+1}},t_{n+1}} } }\\
%% & \leq \Exp{  |g'(\barX_T)|\abs{\partial_x \barX_T^{\barX_{t_{n+1}},t_{n+1}}  -  \partial_x X_T^{\barX_{t_{n+1}},t_{n+1}} } } + 
%% \Exp{ \abs{ g'(\barX_T) - g'(X_T^{\barX_{t_{n+1}},t_{n+1}})} \abs{ \partial_x X_T^{\barX_{t_{n+1}},t_{n+1}} } }\\
%% & = \bigO{\check{N}^{-1/2}},
%% \end{split}
%% \end{equation}
%% where the last inequality follows from assumptions (R.3), (M.2), and
%% Lemmas~\ref{lem:existUniquePath} and~\ref{lem:numSolConv}.  Further
%% replacing $\phiBarX{n+1}$ by $\phiBarX{n}$ is also possible without
%% introducing additional leading-order error terms, since, by the
%% backward scheme~\eqref{eq:backwardScheme1D},
%% \[
%%  \Exp{|\phiBarX{n} -\phiBarX{n+1}| }  = \Exp{ \abs{a_x(t_n, \barX_{t_{n}}) \Delta t_n + b_x(t_n, \barX_{t_{n}}) \Delta W_n } }  = \bigO{\check{N}^{-1/2}}. 
%% \]

The proof is completed in two replacement steps applied to $\varphi_x$
on the right-hand side of equality~\eqref{eq:boundTildeDb2}. First, we
replace $\phiX{t_{n+1},\barX_{t_{n}}}$ by $\phiX{t_{n}, \barX_{t_n}}$.
Under the regularity assumed in this theorem, the replacement is
possible without introducing additional leading order error terms as
\[
\begin{split}
& \Exp{|\phiX{t_{n+1},\barX_{t_{n}}} -\phiX{t_{n},\barX_{t_{n}}}| }   =
\Exp{\abs{g'(X_T^{\barX_{t_{n}},t_{n+1}})\partial_x X_T^{\barX_{t_{n}},t_{n+1}}  - g'(X_T^{\barX_{t_{n}},t_{n}})\partial_x X_T^{\barX_{t_{n}},t_{n}}} }\\
&\leq \Exp{\abs{(g'(X_T^{\barX_{t_{n}},t_{n+1}}) -
    g'(X_T^{\barX_{t_{n}},t_{n}}))\partial_x
    X_T^{\barX_{t_{n}},t_{n+1}}}} \\
& \qquad +
\Exp{\abs {g'(X_T^{\barX_{t_{n}},t_{n}})(\partial_x X_T^{\barX_{t_{n}},t_{n+1}}- \partial_x X_T^{\barX_{t_{n}},t_{n}})}} \\
&= \bigO{\check{N}^{-1/2}}.
\end{split}
\]
Here, the last equality follows from the assumptions (M.2), (M.3), (R.2), and (R.3), 
and Lemmas~\ref{lem:existUniquePath} and~\ref{lem:numSolConv},
\[
\begin{split}
&\Exp{\abs{\Big(g'(X_T^{\barX_{t_{n}},t_{n+1}}) - g'(X_T^{\barX_{t_{n}},t_{n}})\Big)\partial_x X_T^{\barX_{t_{n}},t_{n+1}}}} \\
& \leq C \sqrt{\Exp{\abs{X_T^{\barX_{t_{n}},t_{n+1}} - X_T^{X_{t_{n+1}}^{\barX_{t_{n}},t_n},t_{n+1}}}^2} \Exp{ \abs{\partial_x X_T^{\barX_{t_{n}},t_{n+1}}}^2} }\\
& \leq C \parenthesis{\Exp{\abs{\partial_x X_T^{(1-s_n)\barX_{t_{n}}+ s_nX_{t_{n+1}}^{\barX_{t_{n}},t_n} ,t_{n+1}}}^4}}^{1/4}\\
& \qquad \times 
\parenthesis{\Exp{\abs{\int_{t_n}^{t_{n+1}} a(s,X_{s}^{\barX_{t_{n}},t_n}) ds + \int_{t_n}^{t_{n+1}} b(s,X_{s}^{\barX_{t_{n}},t_n})dW_s}^4  }}^{1/4}\\
& \leq C \parenthesis{\Exp{\sup_{ t_n\le s \le t_{n+1} } |a(s,X_{s}^{\barX_{t_{n}},t_n})|^4 \Delta t_n^4  + 
 \sup_{ t_n\le s \le t_{n+1} } |b(s,X_{s}^{\barX_{t_{n}},t_n})|^4 \Delta t_n^2}}^{1/4} \\
& = \bigO{\check{N}^{-1/2}},
\end{split}
\]
and that
\[
\begin{split}
& \Exp{\abs {g'(X_T^{\barX_{t_{n}},t_{n}})(\partial_x X_T^{\barX_{t_{n}},t_{n+1}}- \partial_x X_T^{\barX_{t_{n}},t_{n}})}}
\leq C \sqrt{\Exp{\abs{\partial_x X_T^{\barX_{t_{n}},t_{n+1}}- \partial_x X_T^{\barX_{t_{n}},t_{n}}}^2}}\\
& = C \sqrt{\Exp{\abs{\partial_x X_T^{\barX_{t_{n}},t_{n+1}}- \partial_x X_T^{X_{t_{n+1}}^{\barX_{t_{n}},t_n},t_{n+1}}\partial_x X_{t_{n+1}}^{\barX_{t_{n}},t_n}}^2}}\\
& \leq C\Bigg(\sqrt{\Exp{\abs{\partial_x X_T^{\barX_{t_{n}},t_{n+1}}- \partial_x X_T^{X_{t_{n+1}}^{\barX_{t_{n}},t_n},t_{n+1}}}}}\\
& \qquad  + 
\sqrt{\Exp{\abs{\partial_x X_T^{X_{t_{n+1}}^{\barX_{t_{n}},t_n},t_{n+1}} \parenthesis{\int_{t_n}^{t_{n+1}} a_x(s,X_{s}^{\barX_{t_{n}},t_n}) ds + \int_{t_n}^{t_{n+1}} b_x(s,X_{s}^{\barX_{t_{n}},t_n})dW_s}}^2}}\Bigg)\\
& \leq C\sqrt{\Exp{\abs{\partial_{xx} X_T^{(1-\hat s_n) \barX_{t_{n}} + \hat{s}_n X_{t_{n+1}}^{\barX_{t_{n}},t_n},t_{n+1}}\parenthesis{\int_{t_n}^{t_{n+1}} a_x(s,X_{s}^{\barX_{t_{n}},t_n}) ds + \int_{t_n}^{t_{n+1}} b_x(s,X_{s}^{\barX_{t_{n}},t_n})dW_s}}^2}}\\
& \qquad + \bigO{\check{N}^{-1/2}} \\
& = \bigO{\check{N}^{-1/2}}.
\end{split}
\]
The last step is to replace the first variation of the exact path $\phiX{t_{n},\barX_{t_{n}}}$
with the first variation of the numerical solution $\phiBarX{n} = g'(\barX_T) \partial_x
\barX_T^{\barX_{t_{n}},t_{n}}$. This is also possible without
introducing additional leading order error terms by the same assumptions and
similar bounding arguments as in the two preceding bounds as
\begin{equation*}\label{eq:phiBarErrorBound}
\begin{split}
& \Exp{\abs{\phiBarX{n} -\phiX{t_{n},\barX_{t_{n}}}} }  =
\Exp{ \abs{ g'(\barX_T) \partial_x \barX_T^{\barX_{t_{n}},t_{n}}  - g'(X_T^{\barX_{t_{n}},t_{n}}) \partial_x X_T^{\barX_{t_{n}},t_{n}} } }\\
& \leq \Exp{  |g'(\barX_T)|\abs{\partial_x \barX_T^{\barX_{t_{n}},t_{n}}  -  \partial_x X_T^{\barX_{t_{n}},t_{n}} } } + 
\Exp{ \abs{ g'(\barX_T) - g'(X_T^{\barX_{t_{n}},t_{n}})} \abs{ \partial_x X_T^{\barX_{t_{n}},t_{n}} } }\\
& = \bigO{\check{N}^{-1/2}}.
\end{split}
\end{equation*}

\end{proof}

\subsection{Second, Third and Fourth Variations}\label{subsec:higherOrderVariations}

The proof of Theorem \ref{thm:mseExpansion1D} relies on
bounded moments of higher order variations of the flow map
$\varphi$. In this section, we we will verify that these higher
order variations are indeed well defined random variables with
all required moments bounded.

To this end, we define the following set of coupled SDE
\begin{equation}\label{eq:extendedSDE}
\begin{split}
d\yy{1}{u} =& a (u,\yy{1}{u}) du  + b(u,\yy{1}{u}) dW_u,\\
d\yy{2}{u} =& a_x (u,\yy{1}{u}) \yy{2}{u} du + b_x (u,\yy{1}{u}) \yy{2}{u} dW_u,\\
d\yy{3}{u} = & \parenthesis{a_{xx} (u,\yy{1}{u}) \parenthesis{\yy{2}{u}}^2 + a_x(u,\yy{1}{u}) \yy{3}{u}} du  \\
& \quad + \parenthesis{b_{xx} (u,\yy{1}{u}) \parenthesis{\yy{2}{u}}^2 + b_x(u,\yy{1}{u}) \yy{3}{u}} dW_u,\\
d\yy{4}{u} =& \parenthesis{a_{xxx} (u,\yy{1}{u}) \parenthesis{\yy{2}{u}}^3 
+ 3a_{xx} (u,\yy{1}{u}) \yy{2}{u} \yy{3}{u}  +  a_x(u,\yy{1}{u}) \yy{4}{u}} du  \\
& \quad +
\parenthesis{ b_{xxx} (u,\yy{1}{u}) \parenthesis{\yy{2}{u}}^3 + 3b_{xx} (u,\yy{1}{u}) \yy{2}{u} \yy{3}{u}  
+  b_x(u,\yy{1}{u}) \yy{4}{u}} dW_u,\\
d\yy{5}{u} = & 
\parenthesis{a_{xxxx}( u,\yy{1}{u}) \parenthesis{\yy{2}{u}}^4 + 6 a_{xxx} (u,\yy{1}{u}) \parenthesis{\yy{2}{u}}^2 \yy{3}{u}} du
\\
&\quad + \parenthesis{ a_{xx} (u,\yy{1}{u}) \parenthesis{3 \parenthesis{\yy{3}{u}}^2 + 4 \yy{2}{u}\yy{4}{u}}
+  a_x(u,\yy{1}{u}) \yy{5}{u} }du
\\
&\quad  +
\parenthesis{ b_{xxxx} (u,\yy{1}{u}) \parenthesis{\yy{2}{u}}^4 + 6 b_{xxx} (u,\yy{1}{u}) \parenthesis{\yy{2}{u}}^2 \yy{3}{u}} dW_u
\\
 & \quad + \parenthesis{ b_{xx} (u,\yy{1}{u}) \parenthesis{3 \parenthesis{\yy{3}{u}}^2 + 4 \yy{2}{u} \yy{4}{u}}  +  b_x(u,\yy{1}{u}) \yy{5}{u} } dW_u,
\end{split}
\end{equation}
defined for $u \in (t,T]$ with the initial condition $Y_t = (x,1,0,0,0)$.
The first component of the vector coincides with equation \eqref{eq:sdeFlow},
whereas the second one is the first variation of the path from equation \eqref{eq:sdeFlowFirstVar}.
The last three components can be understood as the second, third and fourth variations
of the path, respectively.

Making use of the solution of SDE \eqref{eq:extendedSDE}, we also define the second, third and
fourth variations as
\begin{align}
\phiXX{t,x} &= g ' (X^{x,t}_T) \partial_{xx}X^{x,t}_T + g'' (X^{x,t}_T) (\partial_{x}X^{x,t}_T)^2,
\nonumber
\\
\phiXXX{t,x} &= g ' (X^{x,t}_T) \partial_{xxx }X^{x,t}_T + \dots + g''' (X^{x,t}_T) (\partial_{x}X^{x,t}_T)^3,
\label{eq:varDef}
\\
\phiXXXX{t,x} &= g ' (X^{x,t}_T) \partial_{xxxx}X^{x,t}_T + \dots + g'''' (X^{x,t}_T) (\partial_{x}X^{x,t}_T)^4.
\nonumber
\end{align}

In the sequel, we prove that the solution to equation \eqref{eq:extendedSDE} when
understood in the integral sense that extends \eqref{eq:sdeFlow} is a well defined
random variable with bounded moments. Given sufficient differentiability of the
payoff $g$, this results in the boundedness of the higher order variations as required in 
Theorem~\ref{thm:mseExpansion1D}.

\begin{lemma}\label{lem:existenceUniquenessHigherVariations}

Assume that (R.1), (R.2), and (R.3) in Theorem~\ref{thm:mseExpansion1D}
%hold and for fixed $t \in [0,T]$, $x\in \FCal_t$ and $\forall p \in \N : \Exp{{\abs{x}^{2p}}$ .
hold and that for any fixed
$t \in [0,T]$ and $x\in \FCal_t$
such that $\Exp{ \abs{x}^{2p}} < \infty$ for all $ p \in \N$.
Then, equation \eqref{eq:extendedSDE} has pathwise unique
solutions with finite moments. That is,
\begin{equation*}
\max_{i \in \{1,2,\ldots,5\}} \parenthesis{\sup_{u \in [t,T]}\Exp{ \abs{\yy i u}^{2p}}}
< \infty, \qquad  \forall p \in \mathbb N.
\end{equation*}
Furthermore, the higher variations as defined by
equation~\eqref{eq:varDef} satisfy
\begin{equation*}
\phiX{t,x},\phiXX{t,x},\phiXXX{t,x},\phiXXXX{t,x}
\in \FCal_T
\end{equation*}
and for all $p \in \N$,
\begin{equation*}
\mathrm{max} 
\left\{
\Exp{ \abs{\phiX{t,x}}^{2p}}, \Exp{\abs{\phiXX{t,x}}^{2p}},\Exp{\abs{\phiXXX{t,x}}^{2p}},\Exp{\abs{\phiXXXX{t,x}}^{2p}}
\right\}
<
\infty.
\end{equation*}
\end{lemma}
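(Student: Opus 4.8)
The SDE \eqref{eq:extendedSDE} is a $5$-dimensional Itô system whose drift and diffusion coefficients are built from $a$, $b$, their $x$-derivatives up to order four, and polynomial combinations of the components $\yy{1}{u},\ldots,\yy{5}{u}$. The plan is to split the argument into two stages: first establish existence, pathwise uniqueness and bounded moments for the vector $Y=(\yy{1}{u},\ldots,\yy{5}{u})$, and then deduce the corresponding statements for $\phiX{t,x},\phiXX{t,x},\phiXXX{t,x},\phiXXXX{t,x}$ by Borel measurability of $g',\ldots,g''''$ together with H\"older's and Minkowski's inequalities, exactly as in the proof of Lemma~\ref{lem:existUniquePath}. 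The subtlety is that \eqref{eq:extendedSDE} is not globally Lipschitz: the coefficients of $\yy{3}{u},\yy{4}{u},\yy{5}{u}$ contain terms like $(\yy{2}{u})^2$, $\yy{2}{u}\yy{3}{u}$, $(\yy{2}{u})^3$, $(\yy{3}{u})^2$, $\yy{2}{u}\yy{4}{u}$, etc., which are merely locally Lipschitz in $Y$. So the standard existence/uniqueness theorem (\cite[Thm.~4.5.3]{Kloeden92}) does not apply directly to the full system.

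The way around this is to exploit the \emph{triangular} (cascade) structure of \eqref{eq:extendedSDE}. First I would note that $\yy{1}{u}=X_u^{x,t}$ and $\yy{2}{u}=\partial_x X_u^{x,t}$ are exactly the system \eqref{eq:sdeFlowSystem2} already handled in Lemma~\ref{lem:existUniquePath}: under (R.1), (R.2) they have pathwise unique solutions with $\Exp{\sup_{u\in[t,T]}\abs{\yy{1}{u}}^{2p}}<\infty$ and $\Exp{\sup_{u\in[t,T]}\abs{\yy{2}{u}}^{2p}}<\infty$ for all $p$. Given these two processes, the equation for $\yy{3}{u}$ is a \emph{linear} SDE in $\yy{3}{u}$,
\[
d\yy{3}{u} = \big(a_x(u,\yy{1}{u})\,\yy{3}{u} + F_3(u)\big)du + \big(b_x(u,\yy{1}{u})\,\yy{3}{u} + G_3(u)\big)dW_u,
\]
with bounded coefficients $a_x,b_x$ (from (R.1)) and forcing terms $F_3(u)=a_{xx}(u,\yy{1}{u})(\yy{2}{u})^2$, $G_3(u)=b_{xx}(u,\yy{1}{u})(\yy{2}{u})^2$ that already have all moments bounded (boundedness of $a_{xx},b_{xx}$ times moments of $\yy{2}{u}$). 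A linear SDE with bounded drift/diffusion coefficients and an inhomogeneous term in every $L^{2p}$ has a pathwise unique solution with $\Exp{\sup_{u}\abs{\yy{3}{u}}^{2p}}<\infty$; this follows from the variation-of-constants formula together with Gr\"onwall and the Burkholder--Davis--Gundy inequality, or by citing the linear-SDE moment estimates in \cite[Ch.~4]{Kloeden92}. Iterating: with $\yy{1}{u},\yy{2}{u},\yy{3}{u}$ in hand, the equation for $\yy{4}{u}$ is again linear in $\yy{4}{u}$ with bounded coefficients and forcing $a_{xxx}(\yy{2}{u})^3 + 3a_{xx}\yy{2}{u}\yy{3}{u}$ (and similarly for the diffusion part), which lies in every $L^{2p}$ by H\"older; and likewise $\yy{5}{u}$ is linear in $\yy{5}{u}$ with forcing expressible through $\yy{2}{u},\yy{3}{u},\yy{4}{u}$. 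Each step preserves the property "pathwise unique, all moments of the supremum finite," so by induction the whole vector $Y$ satisfies $\max_i\sup_{u\in[t,T]}\Exp{\abs{\yy{i}{u}}^{2p}}<\infty$ for all $p\in\N$.

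For the second assertion: by the integral representations \eqref{eq:sdeFlow}, \eqref{eq:sdeFlowFirstVar}, and \eqref{eq:extendedSDE}, each $\partial_x^j X_T^{x,t}=\yy{j+1}{T}$ is $\FCal_T$-measurable, and since $g',g'',g''',g''''$ are Borel measurable (indeed continuous, by (R.3) which gives $g'\in C_b^3(\R)$), the random variables $\phiX{t,x},\ldots,\phiXXXX{t,x}$ defined by \eqref{eq:varDef} are $\FCal_T$-measurable. For the moment bounds I would apply H\"older's and Minkowski's inequalities to each summand in \eqref{eq:varDef}: a generic term is $g^{(j)}(X_T^{x,t})\cdot(\text{product of }\yy{i}{T}\text{'s})$; the polynomial growth bound \eqref{eq:growthG} on $g$ (and the corresponding bounds on $g',g'',g''',g''''$, which are bounded by (R.3) except possibly $g'$ which has polynomial growth) controls $\Exp{\abs{g^{(j)}(X_T^{x,t})}^{q}}$ via the finite moments of $\yy{1}{T}=X_T^{x,t}$, and the finite moments of the $\yy{i}{T}$ established in the first stage control the rest. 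Taking the product through Cauchy--Schwarz/H\"older and summing finitely many terms via Minkowski yields $\Exp{\abs{\phiX{t,x}}^{2p}},\ldots,\Exp{\abs{\phiXXXX{t,x}}^{2p}}<\infty$ for all $p\in\N$.

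\textbf{Main obstacle.} The one genuinely non-routine point is the failure of global Lipschitz continuity of the full system \eqref{eq:extendedSDE}; the resolution is the cascade/triangular structure, treating each higher variation as the solution of a \emph{linear} SDE driven by lower-order variations that are already under control. Once that observation is made, everything else is a bookkeeping exercise with Gr\"onwall, BDG, and H\"older, closely mirroring the proof of Lemma~\ref{lem:existUniquePath}.
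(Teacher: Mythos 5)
Your proposal is correct and rests on the same structural insight as the paper's proof: the system \eqref{eq:extendedSDE} is triangular, so one builds up from $(\yy{1}{},\yy{2}{})$, already controlled by Lemma~\ref{lem:existUniquePath}, and adds one component at a time, using the boundedness of the derivatives of $a$ and $b$ from (R.1); the final measurability and moment claims for $\phiX{t,x},\ldots,\phiXXXX{t,x}$ are then handled exactly as you say, by extending the H\"older/Minkowski argument of Lemma~\ref{lem:existUniquePath}. Where you diverge is in how each stage is settled. The paper does not exploit the linearity of the $\yy{3}{}$-equation explicitly: it reruns a Kloeden--Platen-style successive-approximation scheme for $\yy{3}{}$ (Gr\"onwall on the Picard differences, Markov plus Borel--Cantelli for almost sure uniform convergence, Karatzas--Shreve for pathwise uniqueness) and then bootstraps the even moments $\Exp{\abs{\yy{3}{u}}^{l}}$ separately via It\^o's formula, Young's inequality and Gr\"onwall, before asserting that $\yy{4}{}$ and $\yy{5}{}$ follow by the same arguments. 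You instead observe that, conditional on the lower-order components, each new component solves a \emph{linear} inhomogeneous SDE with bounded (random, adapted) coefficients $a_x,b_x$ and forcing with all moments finite, so existence, pathwise uniqueness and $L^{2p}$-bounds for the supremum follow from variation of constants (or a direct BDG--Gr\"onwall estimate). Your route is shorter and makes the mechanism more transparent, at the cost of invoking standard linear-SDE machinery; the paper's route is longer but self-contained at the level of the Kloeden--Platen arguments it adapts. Neither has a gap, and both deliver the same per-stage conclusion needed for the induction.
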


\begin{proof}
We note that the system of SDE \eqref{eq:extendedSDE} can be trivially
truncated to its first $d_1 \leq 5$ elements. That is, the truncated
SDE for $\{\yy{j}{u}\}_{j=1}^{d_1}$ for $d_1 < 5$ has drift and diffusion
functions $\hat a: [0,T] \times \mathbb R^{d_1} \rightarrow \mathbb R^{d_1}$ and
$\hat b: [0,T] \times \mathbb R^{d_1} \rightarrow \mathbb R^{d_1\times d_2}$ that do
not depend on $\yy{j}{u}$ for $j \geq d_1$.

This enables verifying existence of solutions for the SDE in stages:
first for $(\yy{1}{}, \yy{2}{})$, thereafter for $(\yy{1}{},
\yy{2}{}, \yy{3}{})$, and so forth, proceeding iteratively to add the
next component $\yy{d_1+1}{}$ to the SDE. We shall also
exploit this structure for proving the result of bounded moments for
each component. The starting point for our proof is
Lemma~\ref{lem:existUniquePath}, which guarantees existence,
uniqueness and the needed moment bounds for the first two components
$\yy{1}{}$, and $\yy{2}{}$. It will turn out in the sequel that,
thanks to the regularity in the drift and diffusion functions of the
SDE~\eqref{eq:extendedSDE}, this regularity will cascade further to
$\yy{j}{}$ for $j \in \{3,4,5\}$.

Kloeden and Platen~\cite[Theorems 4.5.3 and 4.5.4]{Kloeden92} note
that their existence and uniqueness theorems for SDE cannot be
modified in order to account for looser regularity conditions, and the
proof below is a case in point. Our approach here follows closely the
presentation of Kloeden and Platen, with slight modifications on the
inequalities that are used to achieve bounds at various intermediate
stages of the proof.

As a beginning stage for the proof, let us note that 
Theorem~\cite[Thorem 5.2.5]{Karatzas91} guarantees that the solutions of
\eqref{eq:extendedSDE} are pahtwise unique and focus on verifying
the claimed results for $\yy{3}{u}$. 
%By Lemma~\ref{lem:existUniquePath}, we have that $Y_{1,u}$ and $Y_{2,u}$ have finite moments.

We define a successive set of approximations $\yyi{3}{n}{u}$, $n \in
\mathbb N$ by
\begin{multline*}
\yyi{3}{n+1}{u} 
=
\int_t^u a_{xx} (s,\yy{1}{s}) \parenthesis{\yy{2}{s}}^2 + a_x(s,\yy{2}{s}) \yyi{3}{n}{s}  ds  
\\+
\int_t^u b_{xx} (s,\yy{1}{s}) \parenthesis{\yy{2}{s}}^2 + b_x(s,\yy{2}{s}) \yyi{3}{n}{s}  dW_s,
\end{multline*}
with the initial approximation defined by $\yyi{3}{1}{u} = 0$, for all $u \in [t,T]$. 
%% The integrands in
%% \begin{equation*}
%% \int_t^u a_{xx} (s,\yy{1}{s}) \parenthesis{\yy{2}{s}}^2 ds  \quad \text{and} \quad 
%% \int_t^u b_{xx} (s,\yy{1}{s}) \parenthesis{\yy{2}{s}}^2 dW_s, 
%% \end{equation*}
%% are almost surely bounded and thus well defined. 
Let us denote by
\begin{equation}
Q=\int_t^u a_{xx} (s,\yy{1}{s}) \parenthesis{\yy{1}{s}}^2  ds + \int_t^u b_{xx} (s,\yy{1}{s}) \parenthesis{\yy{2}{s}}^2  dW_s
\label{eq:triangleSeparation}
\end{equation}
the terms that do not depend on the, for the time being, highest order
variation $\yyi{3}{n}{u}$.
%For the two remaining integrals, we note that also $a_x$ and $b_x$ are bounded as well as measurable.
We then have, using Young's inequality, that
\begin{align*}
\Exp{\abs{\yyi{3}{n+1}{u}}} \leq &
3 \Exp{\abs{Q}^2}
+
3 \Exp{\abs{\int_t^u  a_x(s,\yy{1}{s}) \yyi{3}{n}{s} ds  }^2} 
+
3 \Exp{\abs{\int_t^u b_x(s,\yy{1}{s}) \yyi{3}{n}{s} dW_s }^2} 
\\
\leq &
3 \Exp{\abs{Q}^2}
+
3 (u-t) \Exp{\int_t^u \abs{ a_x(s,\yy{1}{s}) \yyi{3}{n}{s}  }^2 ds } 
+
3 \Exp{\int_t^u \abs{b_x(s,\yy{1}{s}) \yyi{3}{n}{s}  }^2 ds} .
\end{align*}
The boundedness of the partial derivatives
of the drift and diffusion terms in~\eqref{eq:sdeFlow} gives us
\begin{align*}
\Exp{\abs{\yyi{3}{n+1}{u}}^2} \leq & 3 \Exp{\abs{Q}^2} + C (u-t+1)
\Exp{\int_t^u \parenthesis{1+ \abs{\yyi 3 n s}^2 } ds }.
\end{align*}
By induction, we consequently obtain that
\begin{align*}
\sup_{t\leq u \leq T} \Exp{\abs{\yyi{3}{n}{u}}^2} < \infty, \qquad \forall n \in \N.
\end{align*}
Now, we set $\ydi{3}{n}{u} = \yyi{3}{n+1}{u} -\yyi{3}{n}{u}$. Then
\begin{align*}
\Exp{\abs{\ydi 3 n u}} \leq & 2 \Exp{\abs{ \int_t^u  a_x(s,\yy{1}{s})  \ydi{3}{n-1}{s}  ds }^2 } 
+
2 \Exp{\abs{ \int_t^u  b_x(s,\yy{1}{s}) \ydi {3}{n-1}s dW_s }^2 } 
\\
\leq&
2(u-t) \int_t^u \Exp{\abs{a_x(s,\yy{1}{s}) \ydi {3}{n-1}s }^2} ds
+
2\int_t^u \Exp{\abs{b_x(s,\yy{1}{s})  \ydi {3}{n-1}s }^2} ds
\\
\leq &
 C_1 \int_t^u \Exp{\abs{ \ydi{3}{n-1}{s}}^2} ds.
\end{align*}
Thus, by Gr\"onwall's inequality, 
\begin{align*}
\Exp{\abs{ \ydi 3 n u}^2} \leq \frac{C_1^{n-1}}{(n-1)!} \int_t^u (u-s)^{n-1} \Exp{\abs{ \ydi 3 1 s }^2} ds .
\end{align*}
Let us next show that $\Exp{\abs{ \ydi 3 1 s }^2}$ is bounded. First,
\begin{equation*}
\begin{split}
\Exp{\abs{\ydi 3 1 u }^2} 
&= \Exp{\abs{\int_t^u a_{x} (s,\yy{1}{s})\yyi{3}{2}{s} ds + \int_t^u b_{x}(s,\yy{1}{s} )\yyi{3}{2}{u} dW_s }^2}\\
&\leq  C(u-t + 1) \sup_{s \in[t, u]} \Exp{\abs{\yyi{3}{2}{s}}^2}.
\end{split}
\end{equation*}
Consequently, there exists a $C \in \R$ such that 
\begin{align*}
\Exp{\abs {\ydi 3 n u }^2} \leq \frac{C^{n} (u-t)^n}{n!}, \qquad 
\sup_{u \in [t, T]}  \Exp{\abs {\ydi 3 n  u}^2}  \leq \frac{C^{n} (T-t)^n}{n!}.
\end{align*}
We define
\begin{align*}
Z_n = \sup_{t \leq u \leq T} \abs{ \ydi 3  n u },
\end{align*}
and note that 
\begin{align*}
Z_n \leq &\int_t^T \abs{a_{x}(s,\yy  1 s) \yyi 3 {n+1} s - a_{x}(s,\yy 1 s) \yyi{3}{n}{s} } ds
\\
&+ \sup_{t\leq u \leq T} \abs{\int_t^u b_{x}(s,\yy 1  s) \yyi 3 {n+1} s - b_{x}(s,\yy 1 s) \yyi{3}{n}{s}  dW_s}.
\\
\end{align*}
Using Doob's and Schwartz's inequalities, as well as the
boundedness of $a_x$ and $b_x$,
\begin{align*}
\Exp{\abs{Z_n}^2}
\leq 
&
2 (T-t) \int_t^T \Exp{ \abs{a_{x}(s,\yy 1 s) \yyi 3 {n+1} s - a_{x}(s,\yy 1 s) \yyi{3}{n}{s}}^2 }ds
\\
&+ 8 \int_t^T \Exp{ \abs{b_{x}(s,\yy 1 s)\yyi 3 {n+1} s  - b_{x}(s,\yy 1 s) \yyi{3}{n}{s}} ^2 } ds
\\
\leq 
&
\frac{ C^{n} (T-t)^{n}}{n!},
\end{align*}
for some $C \in \R$.
Using the Markov inequality, we get
\begin{align*}
\sum_{n=1}^\infty  \Prob{ Z_n > n^{-2}} \leq \sum_{n=1}^{\infty} \frac{n^4  C^{n} (T-t)^{n}}{n!}.
\end{align*}
The right-hand side of the equation above converges by the ratio test,
whereas the Borel-Cantelli Lemma guarantees the (almost sure) existence of $K^*\in \N$,
such that $Z_k < k^2, \forall k>K^*$.
We conclude that $\yyi 3 n u$ converges uniformly in $L^2(\mathrm{P})$ 
to the limit $\yy 3 u = \sum_{n=1}^{\infty} \ydi 3 nu$ 
%% since by the ratio test:
%% \begin{align*}
%% \sup_{u \in [ t,T]} \abs{\abs{\ydi 3 n u}}  & \leq \sqrt{ \frac{C^{n} (T-t)^n}{n!} }
%% \\
%% \abs{\abs{ \yy 3 u}} & \leq \sum_{n=1}^{\infty} \sqrt{ \frac{C^{n} (T-t)^n}{n!} } = 
%% \sum_{n=1}^{\infty} \sqrt{ \frac{\xi^n}{n!} }  ,
%% \\
%% \abs{\frac{\sqrt{\frac{\xi^{n+1}}{n! \parenthesis{n+1}}}}{\sqrt{\frac{\xi^n}{n!}}}}
%% &  = \abs{\sqrt{\frac{\xi}{n+1}}}
%% \rightarrow 0,
%% \end{align*}
and that since $\{\yyi 3 n u\}_n$ is a sequence of continuous and $\FCal_u$-adapted processes, $\yy 3 u$ is 
also continuous and $\FCal_u$-adapted.
Furthermore, as $n\to \infty$,
\begin{align*}
\abs{\int_t^u a_{x}(s,\yy{1}{s}) \yyi 3 n s ds - \int_t^u a_{x}(s,\yy{1}{s}) \yy 3  s ds }
\leq C \int_t^u \abs{\yyi 3 n s -\yy 3 s} ds \rightarrow 0, \quad \text{a.s.},
\end{align*}
and, similarly,   
\[
\abs{\int_t^u b_{x}(s,\yy{1}{s}) \yyi 3 n s dW_s - \int_t^u b_{x}(s,\yy{1}{s}) \yy 3  s dW_s } \to 0, \quad \text{a.s.}
\]
This implies that $\yy 3 u$ is a solution to the SDE \eqref{eq:extendedSDE}.

Having established that $\yy 3 u$ solves the
relevant SDE and that it has a finite second moment, we may follow the principles laid out 
in  \cite[Theorem 4.5.4]{Kloeden92} and show that all even moments of 
\begin{align*}
\yy 3 u = Q + \int_t^u  a_x (t,\yy 1 s) \yy 3 s ds +  \int_t^u  b_x (t,\yy 1 s) \yy 3 s dW_s
 \end{align*}
are finite. By It\^o's Lemma, we get that for any even integer $l$,
\begin{align*}
\abs{\yy 3 u}^{l}
= &
\int_t^u
\abs{\yy 3 s}^{l-2} \yy 3 s \parenthesis{a_{xx}(s,\yy{1}{s} ) \parenthesis{\yy{2}{s}}^2 + a_x(s,\yy{1}{s}) \yy 3 s }
ds
\\
&+
\int_t^u
\frac{l (l-1)}{2} \abs{\yy 3 s}^{l-2} \parenthesis{b_{xx}(s,\yy{1}{s} ) \parenthesis{\yy{2}{s}}^2 + b_x(s,\yy{1}{s}) \yy 3 s }^2 
ds
\\
&+
\int_t^u
\abs{\yy 3 s}^{l-2} \yy 3 s \parenthesis{b_{xx}(s,\yy{1}{s} ) \parenthesis{\yy{2}{s}}^2 + b_x(s,\yy{1}{s}) \yy 3 s }
dW_s.
\end{align*}
Taking expectations, the It\^o integral vanishes,
\begin{multline*}
\Exp{\abs{\yy 3 s}^{l}}
=
\Exp{
\int_t^u
\abs{\yy 3 s}^{l-2} \yy 3 s \parenthesis{a_{xx}(s,\yy{1}{s} ) \parenthesis{\yy 2 s }^ 2 + a_x(s,\yy{1}{s})\yy 3 s }
ds
}
\\
+
\Exp{
 \int_t^u
 \frac{l (l-1)\abs{\yy 3 s}^{l-2} }{2}
 \parenthesis{ 
 b_{xx}(s,\yy{1}{s} )\parenthesis{\yy 2 s}^2 + b_x(s,\yy{1}{s}) \yy{3}{s} }^2 
ds
}.
\end{multline*}
Using Young's inequality, denoting the term that does not depend on $\yy 3 s$ in the
first integral by $Q$, and exploiting the boundedness of $a_x$, we have that
\begin{align*}
\Exp{
\abs{
\yy 3u
}^l
} - Q
\leq &
C
\int_t^u
\Exp{|Y_{3,u}|^{l} } 
ds
\\
&+
\Exp{
\int_t^u
\frac{l(l-1) \abs{\yy 3 s}^{l-2} }{2} 
\parenthesis{b_{xx}\parenthesis{s, \yy 1 s} \parenthesis{\yy 2 s}^2+ b_x \parenthesis{s,\yy 1 s} \yy 3 s }^2
ds
}.
\end{align*}
By the same treatment for the latter integral, lumping together
all terms independent of $\yy 3 s$ and using that $b_x$ is bounded,
\begin{align*}
%% \Exp{\abs{\yy 3 u}^{l}} - \tilde Q
%% \leq &
%% \tilde L
%% \abs{ \int_t^u
%% \Exp{\abs{\yy 3 u}^{l} } }^2
%% ds
%% \\
%% &+
%%  \tilde {\tilde L}
%%  \int_t^u
%%  \Exp{
%%  \abs{\yy 3 s}^{l}   
%% }
%% ds
%% \\
\Exp{\abs{\yy 3 u}^l} - \tilde Q
\leq &
C \int_t^u
\Exp{\abs{\yy 3 u}^{l} }
ds.
\end{align*}
Thus, by Gr\"onwall's inequality, 
$\Exp{\abs{\yy 3 u}^{l}} < \infty$.

Having established the existence and pathwise uniqueness of $\yy 3s$,
as well as the finiteness of its moments, verifying the same
properties for $\yy 4 u$ and $\yy 5 u$ can be done using similar
arguments relying, most importantly, on the boundedness of the relevant derivatives
of the drift and diffusion functions.
%% The main
%% change to the proof is modifying \eqref{eq:triangleSeparation} to account
%% for the terms resulting from lower-order derivatives. The boundedness of these
%% modified terms follows from the boundedness
%% of relevant derivatives of the drift and diffusion terms of equation~\eqref{eq:sdeFlow}.
Finally, the $\FCal_T$-measurability and moment bounds for the 
variations of the flow map $\varphi$ up to order four
(cf.~equation~\eqref{eq:varDef}) can be verified by a straightforward
extension of the argument in the proof of
Lemma~\ref{lem:existUniquePath}.
\end{proof}

% Theorem~
% \begin{align*}
% \rho(t_n):= \phiX{\barX_{n+1},t_{n+1}}^2\left( \Exp{N} (\Delta t_n)^2 \left(\frac{a_t+ a_x a + a_{xx} b^2/2 }{\sqrt{2}}\right)^2(\barX_{n},t_{n}) + (b_x b)^2(\barX_{n},t_{n}) \right)
% \end{align*}
% and the following error representation for the MSE
% \begin{equation}\label{eq:errorExpansion1}
% \Exp{ \parenthesis{\gXT -\gBarXT}^2}  \leq  \Exp{ \sum_{n=0}^{N-1}  \rho(t_n) \Delta t_{n}^2 } + \text{h.o.t.}
% \end{equation}

\subsection{Error Expansion for the MSE in Multiple Dimensions}\label{subsec:mseHihgerDim}

In this section, we extend the 1D MSE error expansion presented in
Theorem~\ref{thm:mseExpansion1D} to the multi-dimensional setting.
% We recall that $X_t$ denotes a realization of the SDE problem~\eqref{eq:sdeProblem} evaluated at time $t$, 
% and that $\barX_{t_n}$ denotes an Euler--Maruyama numerical realization computed 
% on a mesh $\Delta t= (0=t_0,t_1, \ldots, t_N=T)$ and evaluated at the time $t_n$. 
% The derivation of the error expansion does not make use of any terminology
% from MLMC not, in particular, the multilevel superscript ${\cdot}^{\{\ell\}}$ 
% notation for mesh levels is not be used in this section. 

%\subsection{Generalization to higher dimensions}\label{subsec:adaptivityHigherDimensions}

Consider the SDE
\begin{equation}\label{eq:sdeProblemHigerDimensions}
 \begin{split}
dX_t & = a\parenthesis{t, X_t} dt + b\parenthesis{t,X_t} dW_t, \qquad t \in (0,T]\\
X_0 & = x_0,
\end{split}
\end{equation}
where $X: [0,T] \to \mathbb \R^{d_1}$, $W : [0,T] \to \R^{d_2}$, $a:
 [0,T] \times \R^{d_1}  \to \R^{d_1}$ and $b:  [0,T] \times \R^{d_1} \to \R^{d_1 \times d_2}$.  
Let further $x_i$ denote the $i$-th component of 
$x \in \mathbb R^{d_1}$, $a^{(i)}$, the $i$-th component of a
drift coefficient and $b^{(i,j)}$ and $b^\T$ denote the $(i,j)$-th
element and the transpose of the diffusion matrix $b$, respectively. (To avoid
confusion, this derivation does not make use of any MLMC notation,
particularly not the multilevel superscript ${\cdot}^{\{\ell\}}$.)

Using the Einstein summation convention to sum over repeated
indices, but not over the time index $n$, the 1D local error terms 
in equation~\eqref{eq:localErrorTerm} generalize into
\begin{align*}
\checkDA_{n}^{(i)} & =  \int_{t_{n}}^{t_{n+1} } \int_{t_{n}}^t \parenthesis{ a^{(i)}_t +  a_{x_j}^{(i)} a^{(j)} + \frac{1}{2} a_{x_j x_k }^{(i)} (b b^T)^{(j,k)}}  \, ds  \, dt,\\
\tildeDA_n^{(i)}  & =  \int_{t_{n}}^{t_{n+1} } \int_{t_{n}}^t  a_{x_j}^{(i)} b^{(j,k)}   \, dW_s^{(k)}  \, dt, \\
\checkDB_n^{(i)}  & =  \int_{t_{n}}^{t_{n+1} } \int_{t_{n}}^t b_t^{(i,j)} + b_{x_k}^{(i,j)} a^{(k)} + \frac{1}{2} b^{(i,j)}_{x_k x_\ell} (b b^T)^{(k,\ell)}  \, ds \, dW_t^{(j)}, \\
\tildeDB_n^{(i)}  & =  \int_{t_{n}}^{t_{n+1} } \int_{t_{n}}^t  b_{x_k}^{(i,j)} b^{(k,\ell)}   \, dW_s^{(\ell)} \, dW_t^{(j)},
\end{align*}
where all the above integrand functions in all equations implicitly depend on the state argument $X_s^{\barX_{t_n},t_n}$. In flow notation,
$a^{(i)}_{t}$ is shorthand for $a^{(i)}_{t}(s,X_s^{\barX_{t_n},t_n})$.

Under sufficient regularity, a tedious calculation similar to the proof of 
Theorem~\ref{thm:mseExpansion1D}
verifies that, for a given smooth payoff, $g:\R^{d_1} \to \R$,
\begin{equation*}\label{eq:errorExpansionHigherDim}
\Exp{ \parenthesis{\gXT -\gBarXT}^2}  \leq   \Exp{ \sum_{n=0}^{N-1} \rhoBar_n \Delta t_{n}^2  + \littleO{\Delta t_n^2} },
\end{equation*}
where 
\begin{equation}\label{eq:rhoBarHigherDim}
\rhoBar_n := \frac{1}{2} \phiBarXi{x_i}{n}   \parenthesis{ (bb^\T)^{(k, \ell)} (b_{x_k} b_{x_\ell}^\T)}^{(i,j)} (t_n,\barX_{t_n})  \phiBarXi{x_j}{n}.
\end{equation}
In the multi-dimensional setting, the $i$-th component of 
first variation of the flow map, 
$\varphi_x = (\varphi_{x_1}, \varphi_{x_2}, \ldots,
\varphi_{x_{d_1}})$, is given by
\begin{equation*}
\label{eq:phiXSdeHigerDimensions*}
\varphi_{x_i} \parenthesis{t, y} = g_{x_j}(X^{y,t}_T)  \partial_{x_i} \parenthesis{X^{y,t}_T}^{(j)}.
\end{equation*}
The first variation is defined as the second component to the solution of the SDE,
\begin{align*}
\begin{split}
d \yy {1,i} s &= a^{(i)} \parenthesis{s, \yy 1 s} ds + b^{(i,j)} \parenthesis{s, \yy 1 s} d W^{(j)}_s
\\
d \yy {2,i,j} s &= a^{(i)}_{x_k} \parenthesis{s, \yy 1 s}   \yy {2,k,j} s ds + b^{(i,\ell)}_{x_k} \parenthesis{s, \yy 1 s}
 \yy {2,k,j} s dW^{(\ell)}_s,
\end{split}
\end{align*}
where $s \in (t,T]$ and the initial conditions are given by 
$\yy 1 t = x \in \R^{d_1}$, $\yy 2 t = I_{d_1}$,
with $I_{d_1}$ denoting the $d_1\times d_1$ identity matrix. Moreover,
the extension of the numerical method for solving the first variation
of the 1D flow map~\eqref{eq:backwardScheme1D} reads
\begin{align}
\label{eq:bwhd}
\overline \varphi_{x_i,n}  &=  c_{x_i}^{(j)} (t_n,\barX_{t_{n}}) \overline \varphi_{x_j,n+1} , \quad n=N-1,N-2,\ldots 0. \\
 \overline \varphi_{x_i,N}&= g_{x_i}(\barX_T),
 \nonumber
\end{align}
with the $j$-th component of $c: [0,T] \times \R^{d_1}  \to \R^{d_1}$ defined by
\begin{equation*}
c^{(j)}
\parenthesis{t_n, \barX_{t_n}} 
= \barX_{t_n}^{(j)} + a^{(j)}(t_n,\barX_{t_n}) \Delta t_n +  b^{(j,k)}(t_n, \barX_{t_n}) \Delta W^{(k)}_n.
\end{equation*}

Let $U$ and $V$ denote subsets of Euclidean spaces and let us introduce the multi-index $\nu = (\nu_1, \nu_2, \ldots, \nu_d)$ 
to represent spatial partial derivatives of order $|\nu| := \sum_{j=1}^d \nu_j$ on the following short form
$\partial_{x_\nu} := \prod_{j=1}^d \partial_{x_j}^\nu$. We further introduce the following function spaces.
\[
\begin{split}
&C(U; V)          := \{f:U \to V \, |\, f \text{ is continuous} \},\\
&C_b(U;V)          := \{f:U \to V \, |\, f \text{ is continuous and bounded} \},\\
&C_b^k(U;V)        := \Big\{f:U \to V \, |\, f \in C(U;V) \text{ and } 
\frac{d^j}{dx^j} f \in C_b(U;V) \\
& \quad \text{for all integers }  1 \le j \leq k \Big\},\\
&C_b^{k_1,k_2}([0,T] \times U; V):= \Big\{f:[0,T] \times U \to V \, |\, f \in C([0,T] \times U; V), \text{ and } \\
& \quad \partial_t^{j}\partial_\nu f  \in C_b([0,T]\times U; V) 
\text{ for all integers s.t. } j \leq k_1 \text{ and }  1 \le j+\abs{\nu} \leq k_2 \Big\}.
\end{split}
\]

\begin{theorem}[MSE leading order error expansion in the multi-dimensional setting]~\label{thm:mseExpansionMultiD} 
Assume that drift and diffusion coefficients and input data of the
SDE~\eqref{eq:sdeProblemHigerDimensions} fulfill
\begin{enumerate}
\item[(R.1)] $a \in C_b^{2,4}([0,T]\times \R^{d_1}; \R^{d_1})$ and 
$b \in C_b^{2,4}([0,T]\times \R^{d_1}; \R^{d_1 \times d_2})$,

\item[(R.2)] there exists a constant $C>0$ such that
\begin{align*}
%|a(t,x) - a(t,y)| + |b(t,x) - b(t,y)| &\leq C |x-y|, & \forall x, y \in \R \text{ and } \forall t \in [0,T],\\
|a(t,x)|^2 + |b(t,x)|^2 & \leq C(1+|x|^2), & \forall x \in \R^{d_1} \text{ and } \forall t \in [0,T],\\
%|a(s,x) - a(t,x)| + |b(s,x) - b(t,x)| &\leq C(1 + |x|)\sqrt{|s-t|}, & \forall x\in \R \text{ and } \forall s,t \in [0,T],
\end{align*}

\item [(R.3)] $g' \in C^3_b(\R^{d_1})$ and 
there exists a $k \in \N$ such 
\begin{equation*}\label{eq:growthG}
|g(x)| + |g'(x)| \leq C(1+|x|^k), \qquad \forall x \in \R^{d_1},
\end{equation*}

\item[(R.4)] for the initial data, $X_0 \in \FCal_0$ and $\Exp{|X_0|^p} < \infty$ for all $p\ge 1$.
\end{enumerate}
Assume further the mesh points $0=t_0<t_1< \ldots <t_N = T$ 
\begin{enumerate}
\item[(M.1)] are stopping times for which
$t_n \in \FCal_{t_{n-1}}$ for $n=1,2,\ldots, N$,

\item[(M.2)] for all mesh realizations, there exists a deterministic integer, $\check N$, and a
  $c_1>0$ such that 
$c_1\check{N} \le N \le \check N$ and
a $c_2>0$ such that $\max_{n\in \{0,1,\ldots, N-1\}} \Delta t_n < c_2 \check{N}^{-1}$,

\item[(M.3)] and there exists a $c_3>0$ such that for all $p \in [1,8]$ 
and $n \in \{0,1,\ldots, \check{N}-1\}$,
\[
\Exp{\Delta t_n^{2p}} \leq c_3 \parenthesis{\Exp{ \Delta t_n^2 } }^{p}.
\]  

\end{enumerate}
Then, as $\check{N}$ increases, 
%% Assume that drift and diffusion coefficients of the SDE~\eqref{eq:sdeProblem} 
%% fulfill $a \in C_b^{2,4}([0,T]\times \R^{d_1}; \R^{d_1})$ and 
%% $b \in C_b^{2,4}([0,T]\times \R^{d_1}; \R^{d_1 \times d_2})$, that there exists a constant $C>0$ such that
%% \begin{align*}
%% %|a(t,x) - a(t,y)| + |b(t,x) - b(t,y)| &\leq C |x-y|, & \forall x, y \in \R \text{ and } \forall t \in [0,T],\\
%% |a(t,x)|^2 + |b(t,x)|^2 & \leq C(1+|x|^2), & \forall x \in \R^d \text{ and } \forall t \in [0,T],\\
%% %|a(s,x) - a(t,x)| + |b(s,x) - b(t,x)| &\leq C(1 + |x|)\sqrt{|s-t|}, & \forall x\in \R \text{ and } \forall s,t \in [0,T],
%% \end{align*}
%% that the initial data $X_0 \in \F_0$ and $\Exp{|X_0|^p} < \infty$ for all $p\ge 1$,
%% that for the observable $g' \in C^3_b(\R^d; \R)$ and that 
%% there exists a $k \in \N$ such that
%% \begin{equation}\label{eq:growthG}
%% |g(x)| + |g'(x)| \leq C(1+|x|^k), \quad \forall x \in \R^d.
%% \end{equation}
%% Furthermore, assume the regularity conditions (M.1), (M.2) and (M.3)
%% for the mesh points $0 \leq t_0 \leq t_1< ... < t_N=T$.
%% Then,
\begin{multline*}
%\label{eq:mseExpansionMultiDExact}
\Exp{\parenthesis{g(X_T) -\gBarXT}^2} 
= \Exp{\sum_{n=0}^{N-1} \frac{\parenthesis{\varphi_{x_i} \parenthesis{ (bb^\T)^{(k, \ell)} (b_{x_k} b_{x_\ell}^\T)}^{(i,j)}   \varphi_{x_j}}(t_n,\barX_{t_n})}{2} \Delta t_n^2 + o(\Delta t_n^2)},
%%\\
%= &\Exp{ \sum_{n=0}^{N-1}
%  \frac{1}{2} \phiX{x_i}{n}   \parenthesis{ (bb^\T)^{(k, \ell)} (b_{x_k} b_{x_\ell}^\T)}^{(i,j)} (t_n, X_{t_n})  \phiX{x_j}{n}. \phiX{t_n,\barX_n} \Delta t_n^2  + \littleO{\Delta t_n^2} }.
%% = & 
%% \Exp{
%%  \sum_{n=0}^{N-1}
%% \frac{
%%  \varphi_{x_i,n}
%% b^{(i,j)}_{x_k} b^{(k,l)} b^{(l,q)} b_{x_q}^{(j,i)}
%%  \varphi_{x_i,n} 
%%  }{2}
%%  \Delta t_n^2
%% + \littleO{\Delta t_n^2}
%% }
%% \\
%% = & 
%% \Exp{
%%  \sum_{n=0}^{N-1}
%% \frac{
%%  \varphi_{x_i,n}
%% b^{(i,j)}_{x_k} \parenthesis{b b^\T}^{(k,q)} b_{x_q}^{(j,i)}
%%  \varphi_{x_i,n} 
%%  }{2}
%%  \Delta t_n^2
%% + \littleO{\Delta t_n^2}
%% }
%% \\
%% =
%% \Exp{
%%  \sum_{n=0}^{N-1}
%% \frac{
%%  \varphi_{x_i,n}
%%  \parenthesis{ \parenthesis{b b^\T}^{(k,q)} b_{x_k} b_{x_q} }^{(i,j)}
%%  \varphi_{x_i,n} 
%%  }{2}
%%  \Delta t_n^2
%% + \littleO{\Delta t_n^2}
%%}
\end{multline*}
where we have dropped the arguments of the first variation as well as the diffusion matrices
for clarity.

Replacing the first variation $\phiXi{x_i}{t_n,\barX_n}$ by the
numerical approximation $\phiBarXi{x_i}{n}$, as defined in
\eqref{eq:bwhd} and using the error density notation $\rhoBar$ 
from~\eqref{eq:rhoBarHigherDim}, we obtain 
the following to leading order all-terms-computable error
expansion:
\begin{align}
\Exp{\parenthesis{g(X_T) -\gBarXT}^2} = \Exp{ \sum_{n=0}^{N-1} \rhoBar_n \Delta t_n^2 + o(\Delta t_n^2)}.
\label{eq:mseExpansionMultiDExact}
\end{align}
\end{theorem}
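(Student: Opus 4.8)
The plan is to transcribe the proof of Theorem~\ref{thm:mseExpansion1D} into the Einstein-summation notation, replacing each scalar quantity by its tensor-valued analogue and leaving the skeleton of the argument untouched. First I would establish the multi-dimensional versions of Lemmas~\ref{lem:existUniquePath},~\ref{lem:numSolConv} and~\ref{lem:existenceUniquenessHigherVariations}: the coupled system $(Y^{(1)},Y^{(2)},\dots,Y^{(5)})$ now carries the path $X\in\R^{d_1}$, the matrix-valued first variation $\partial_x X\in\R^{d_1\times d_1}$, and the tensor-valued second, third and fourth variations, all driven by drifts and diffusions built from the partial derivatives of $a,b$ up to fourth order. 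Under (R.1)--(R.4) the truncated subsystems have Lipschitz, linearly growing coefficients, so the staged existence/uniqueness argument of~\cite{Kloeden92} and the cascading $L^p$ bounds of Lemma~\ref{lem:existenceUniquenessHigherVariations} go through verbatim with the indices carried along, yielding $\FCal_T$-measurability and all moments of the first variation $\phiXi{x_i}{t,x}$ and of its higher partial derivatives in $x$ up to fourth order. Strong convergence and moment bounds for the Euler--Maruyama path $\barX$ and for the numerical first variation solving~\eqref{eq:bwhd} follow from~\cite[Theorem~10.6.3]{Kloeden92} applied componentwise, exactly as in Lemma~\ref{lem:numSolConv}.

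Next I would write the global error telescopically through the still scalar-valued flow map $\varphi(t,x)=g(X^{x,t}_T)$: a componentwise mean value theorem gives $\gXT-\gBarXT=\sum_{n=0}^{N-1}\phiXi{x_i}{t_{n+1},\barX_{t_{n+1}}+s_n\Delta e_n}\,\Delta e_n^{(i)}$, and an It\^o--Taylor expansion of $\Delta e_n^{(i)}=(X_{t_{n+1}}^{\barX_{t_n},t_n})^{(i)}-\barX_{t_{n+1}}^{(i)}$ splits the local error into the four pieces $\checkDA_n^{(i)}$, $\tildeDA_n^{(i)}$, $\checkDB_n^{(i)}$, $\tildeDB_n^{(i)}$ displayed just before the theorem. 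Squaring the sum and taking expectations, the leading-order contribution comes from the diagonal ($k=n$) part of $\Exp{\varphi_{x_i}\varphi_{x_j}\,\tildeDB_k^{(i)}\tildeDB_n^{(j)}}$. Conditioning on the completed $\sigma$-algebra $\hatF^{n}$ that erases the Wiener increments on $(t_n,t_{n+1})$ --- after an It\^o--Taylor replacement of the evaluation point of the first variations so that they become $\hatF^{n}$-measurable --- applying It\^o's formula to freeze the integrands at $(t_n,\barX_{t_n})$, and using that $\Exp{I_{(\ell,j)}I_{(\ell',j')}}=\tfrac12\Delta t_n^2\,\delta_{\ell\ell'}\delta_{jj'}$ for the double It\^o integrals $I_{(\ell,j)}=\int_{t_n}^{t_{n+1}}\int_{t_n}^{t}dW^{(\ell)}_s\,dW^{(j)}_t$ (a standard consequence of It\^o isometry, cf.~\cite[Lemma~5.7.2]{Kloeden92}), one gets $\Exp{\tildeDB_n^{(i)}\tildeDB_n^{(j)}\mid\hatF^{n}}=\tfrac12\big((bb^\T)^{(k,\ell)}(b_{x_k}b_{x_\ell}^\T)\big)^{(i,j)}(t_n,\barX_{t_n})\,\Delta t_n^2+\littleO{\Delta t_n^2}$, which contracted against $\varphi_{x_i}\varphi_{x_j}$ is precisely twice the density $\rhoBar_n$ of~\eqref{eq:rhoBarHigherDim}.

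What remains is to show that everything else is negligible after summation over $n$. The off-diagonal ($k\neq n$) terms of the $\tildeDB$--$\tildeDB$ sum and every mixed product containing at least one of $\checkDA$, $\tildeDA$, $\checkDB$ are handled exactly as in 1D: It\^o--Taylor expand the first variations about center points measurable with respect to the coarser $\sigma$-algebra $\hatF^{k,n}$, use that the inner products of the resulting iterated It\^o integrals vanish~\cite[Lemma~5.7.2]{Kloeden92}, and combine Cauchy--Schwarz with the conditional moment bounds for multiple integrals~\cite[Lemma~5.7.5]{Kloeden92} and hypotheses (M.2)--(M.3) to obtain per-pair estimates of order $\check{N}^{-3/2}\big(\Exp{\Delta t_k^2}\Exp{\Delta t_n^2}\big)^{1/2}$, which sum to $\check{N}^{-1/2}\sum_n\Exp{\Delta t_n^2}=\littleO{1}$; the diagonal contributions of the surviving local error terms are $\bigO{\Delta t_n^3}$ or $\bigO{\Delta t_k^2\Delta t_n^2}$. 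Finally, the two replacement steps $\phiXi{x_i}{t_{n+1},\barX_{t_{n+1}}}\to\phiXi{x_i}{t_n,\barX_{t_n}}\to\phiBarXi{x_i}{n}$ are justified by the strong convergence and moment bounds established above together with the chain rule for the flow, exactly as in the scalar proof, producing the all-terms-computable form~\eqref{eq:mseExpansionMultiDExact}. The main obstacle is the index bookkeeping in the middle step: one must track which Wiener components and which subintervals pair up so that the multiple-integral inner products collapse onto the single tensor $(bb^\T)^{(k,\ell)}(b_{x_k}b_{x_\ell}^\T)^{(i,j)}$ without leaving residual L\'evy-area cross terms, and one must control the tensor-index versions of all the It\^o--Taylor remainders uniformly in $\check N$; beyond that, the argument is a faithful, if lengthy, copy of the proof of Theorem~\ref{thm:mseExpansion1D}.
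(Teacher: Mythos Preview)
Your proposal is correct and matches the paper's approach exactly: the paper does not give a standalone proof of Theorem~\ref{thm:mseExpansionMultiD} but simply states that ``a tedious calculation similar to the proof of Theorem~\ref{thm:mseExpansion1D} verifies'' the multi-dimensional expansion, after introducing the tensor-valued local error terms $\checkDA_n^{(i)},\tildeDA_n^{(i)},\checkDB_n^{(i)},\tildeDB_n^{(i)}$ and the backward scheme~\eqref{eq:bwhd}. Your outline is in fact more detailed than what the paper itself provides.
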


\section{A Uniform Time Step MLMC Algorithm}\label{sec:uniformMlmcAlg}
  The uniform time step MLMC algorithm for MSE approximations of SDE
  was proposed in~\cite{Giles08}. Below, we present the version of that
  method that we use in the numerical tests in this work for reaching the approximation
  goal~\eqref{eq:mlmcGoal}.

\begin{algorithm}[h!]
\caption{{\bf mlmcEstimator}}
\begin{algorithmic}\label{alg:uniformMlmcEstimator}
  \STATE{\bf Input:} $\tolT$, $\tolS$, confidence $\delta$, input mesh $\Dt{-1}$, input mesh intervals $N_{-1}$, 
    inital number of samples $\widehat M$, weak convergence rate $\alpha$, SDE problem.
  \STATE{\bf Output:} Multilevel estimator $\AMLSimple$.
    
\medskip 

\STATE{Compute the confidence parameter $\Cc(\delta)$ by~\eqref{eq:CcDefinition}.}

\STATE{Set $L=-1$.}

\WHILE{$L<3$ \OR ~\eqref{eq:determineL}, using the input rate $\alpha$, is violated}

  \STATE{Set $L=L+1$.}
  
  \STATE {Set $M_L = \widehat M$, generate a set of (Euler--Maruyama) realizations $\{ \DlGO \}_{i=1}^{M_L}$
  on mesh and Wiener path pairs $(\Dt{L-1}, \Dt{L})$ and $(\WL{L-1}, \WL{L})$, 
  where the uniform mesh pairs have step sizes $\Dt{L-1} = T/N_{L-1}$ and $\Dt{L} = T/N_{L})$, respectively.
  }
  \FOR{$\ell=0$ \TO $L$}
    \STATE{Compute the sample variance $\V{ \DlG ; M_l}$.}
  \ENDFOR

  \FOR{$\ell=0$ \TO $L$}
    \STATE{Determine the number of samples by
    \[
     M_\ell = \roundUp{ \frac{\Cc^2}{\tolS^2} \sqrt{\frac{\Var{\DlG}}{N_\ell}} 
     \sum_{\ell=0}^L \sqrt{N_\ell \Var{\DlG } }  }.
    \]
    (The equation for $M_l$ is derived by Lagrangian 
    optimization, cf.~Section~\ref{subsec:statError}.)}
      \IF{New value of $M_\ell$ is larger than the old value}{
	\STATE{Compute additional (Euler--Maruyama) realizations $\{ \DlGO \}_{i=M_\ell+1}^{M_\ell^{new}}$ 
	  on mesh and Wiener path pairs $(\Dt{\ell-1}, \Dt{\ell})$ and $(\WL{\ell-1}, \WL{\ell})$, 
	  where the uniform mesh pairs have step sizes $\Dt{\ell-1} = T/(2^{\ell} N_{-1})$ and $\Dt{\ell} = T/(2^{\ell+1} N_{-1})$, respectively.
	}}
      \ENDIF
  \ENDFOR
\ENDWHILE

Compute $\AMLSimple$ using the generated samples by the formula~\eqref{eq:MLestimator}.

\end{algorithmic}
\end{algorithm}

%\newpage

%%%%%%%%%%%%%%%%%%%%%%%%%%%%%%%%%%%%%%%%%%%%%%%%%%%%%%%%%%%%%%%%%%%%%%%%%%%%%%%%%%%%%%%%%%%
%%% The acknowledgements

\medskip
\par
{\bf Acknowledgments.}

The authors would like to thank Arturo Kohatsu-Higa for his helpful
suggestions for improvements in the proof of
Theorem~\ref{thm:mseExpansion1D}.

\medskip
\par

{\bf Support} This work was supported by King
Abdullah University of Science and Technology (KAUST); by Norges
Forskningsr{\aa}d,  research project 214495 LIQCRY; and by the University of Texas,
Austin Subcontract (Project Number 024550, Center for Predictive
Computational Science).  The first author was and the third author is
a member of the Strategic Research Initiative on Uncertainty
Quantification in Computational Science and Engineering at KAUST
(SRI-UQ).

%%%%%%%%%%%%%%%%%%%%%%%%%%%%%%%%%%%%%%%%%%%%%%%%%%%%%%%%%%%%%%%%%%%%%%%%%%%%%%%%%%%%%%%%%%%
%%% The bibliography
%
% BibTeX users please use
\bibliography{mseAdaptiveMLMC}

\begin{thebibliography}{10}

\bibitem{Avikainen09}
R.~Avikainen.
\newblock On irregular functionals of {SDE}s and the {E}uler scheme.
\newblock {\em Finance and Stochastics}, 13(3):381--401, 2009.

\bibitem{Bangerth03}
W.~Bangerth and R.~Rannacher.
\newblock {\em Adaptive finite element methods for differential equations}.
\newblock Lectures in Mathematics ETH Z\"urich. Birkh\"auser Verlag, Basel,
  2003.

\bibitem{Barth12}
A.~Barth and A.~Lang.
\newblock Multilevel {M}onte {C}arlo method with applications to stochastic
  partial differential equations.
\newblock {\em Int. J. Comput. Math.}, 89(18):2479--2498, 2012.

\bibitem{Cliffe11}
K.~A. Cliffe, M.~B. Giles, R.~Scheichl, and A.~L. Teckentrup.
\newblock Multilevel {M}onte {C}arlo methods and applications to elliptic
  {PDE}s with random coefficients.
\newblock {\em Comput. Vis. Sci.}, 14(1):3--15, 2011.

\bibitem{Collier14}
Nathan Collier, Abdul-Lateef Haji-Ali, Fabio Nobile, Erik von Schwerin, and
  Ra{\'u}l Tempone.
\newblock A continuation multilevel monte carlo algorithm.
\newblock {\em BIT Numerical Mathematics}, 55(2):399--432, 2014.

\bibitem{Durrett96}
R.~Durrett.
\newblock {\em Probability: theory and examples}.
\newblock Duxbury Press, Belmont, CA, second edition, 1996.

\bibitem{Gaines97}
J.~G. Gaines and T.~J. Lyons.
\newblock Variable step size control in the numerical solution of stochastic
  differential equations.
\newblock {\em SIAM J. Appl. Math}, 57:1455--1484, 1997.

\bibitem{Giles08}
M.~B. Giles.
\newblock Multilevel {M}onte {C}arlo path simulation.
\newblock {\em Oper. Res.}, 56(3):607--617, 2008.

\bibitem{Giles14}
M.~B. Giles and L.~Szpruch.
\newblock Antithetic multilevel {M}onte {C}arlo estimation for
  multi-dimensional {SDE}s without {L}\'evy area simulation.
\newblock {\em Ann. Appl. Probab.}, 24(4):1585--1620, 2014.

\bibitem{Giles15}
Michael~B Giles.
\newblock Multilevel monte carlo methods.
\newblock {\em Acta Numerica}, 24:259--328, 2015.

\bibitem{Gillespie00}
D.~T. Gillespie.
\newblock The chemical {L}angevin equation.
\newblock {\em The Journal of Chemical Physics}, 113(1):297--306, 2000.

\bibitem{Glasserman04}
P.~Glasserman.
\newblock {\em Monte {C}arlo methods in financial engineering}, volume~53 of
  {\em Applications of Mathematics (New York)}.
\newblock Springer-Verlag, New York, 2004.
\newblock Stochastic Modelling and Applied Probability.

\bibitem{AbdulLateef14}
Abdul-Lateef Haji-Ali, Fabio Nobile, Erik von Schwerin, and Raúl Tempone.
\newblock Optimization of mesh hierarchies in multilevel monte carlo samplers.
\newblock {\em Stochastic Partial Differential Equations: Analysis and
  Computations}, pages 1--37, 2015.

\bibitem{Heinrich98}
S.~Heinrich.
\newblock Monte {C}arlo complexity of global solution of integral equations.
\newblock {\em J. Complexity}, 14(2):151--175, 1998.

\bibitem{HeinrichSin99}
S.~Heinrich and E.~Sindambiwe.
\newblock Monte {C}arlo complexity of parametric integration.
\newblock {\em J. Complexity}, 15(3):317--341, 1999.

\bibitem{HoelMLMC14}
H.~Hoel, E.~von Schwerin, A.~Szepessy, and R.~Tempone.
\newblock Implementation and analysis of an adaptive multilevel {M}onte {C}arlo
  algorithm.
\newblock {\em Monte Carlo Methods and Applications}, 20(1):1--41, 2014.

\bibitem{Hofman00}
N.~Hofmann, T.~M{\"u}ller-Gronbach, and K.~Ritter.
\newblock Optimal approximation of stochastic differential equations by
  adaptive step-size control.
\newblock {\em Math. Comp.}, 69(231):1017--1034, 2000.

\bibitem{Hunter07}
J.~D. Hunter.
\newblock Matplotlib: A 2d graphics environment.
\newblock {\em Computing in Science Engineering}, 9(3):90 --95, may-june 2007.

\bibitem{Silvana12}
S.~Ilie.
\newblock Variable time-stepping in the pathwise numerical solution of the
  chemical {L}angevin equation.
\newblock {\em The Journal of Chemical Physics}, 137(23):--, 2012.

\bibitem{Karatzas91}
I.~Karatzas and S.~E. Shreve.
\newblock {\em Brownian motion and stochastic calculus}, volume 113 of {\em
  Graduate Texts in Mathematics}.
\newblock Springer-Verlag, New York, second edition, 1991.

\bibitem{Kebaier05}
A.~Kebaier.
\newblock Statistical {R}omberg extrapolation: A new variance reduction method
  and applications to option pricing.
\newblock {\em Ann. Appl. Probab.}, 15(4):2681--2705, 2005.

\bibitem{Kloeden92}
P.~E. Kloeden and E.~Platen.
\newblock {\em Numerical solution of stochastic differential equations},
  volume~23 of {\em Applications of Mathematics (New York)}.
\newblock Springer-Verlag, Berlin, 1992.

\bibitem{Mattingly05}
H.~Lamba, J.~C. Mattingly, and A.~M. Stuart.
\newblock An adaptive {E}uler-{M}aruyama scheme for {SDE}s: convergence and
  stability.
\newblock {\em IMA J. Numer. Anal.}, 27(3):479--506, 2007.

\bibitem{Lecuyer05}
P.~L'Ecuyer and E.~Buist.
\newblock Simulation in {J}ava with {SSJ}.
\newblock In {\em Proceedings of the 37th conference on Winter simulation}, WSC
  '05, pages 611--620. Winter Simulation Conference, 2005.

\bibitem{Milstein03}
G.~N. Milstein and M.~V. Tretyakov.
\newblock Quasi-symplectic methods for {L}angevin-type equations.
\newblock {\em IMA J. Numer. Anal.}, 23(4):593--626, 2003.

\bibitem{Mishra12}
S.~Mishra and C.~Schwab.
\newblock Sparse tensor multi-level {M}onte {C}arlo finite volume methods for
  hyperbolic conservation laws with random initial data.
\newblock {\em Math. Comp.}, 81(280):1979--2018, 2012.

\bibitem{Oksendal98}
B.~{\O}ksendal.
\newblock {\em Stochastic differential equations}.
\newblock Universitext. Springer-Verlag, Berlin, fifth edition, 1998.

\bibitem{Platen06}
E.~Platen and D.~Heath.
\newblock {\em A benchmark approach to quantitative finance}.
\newblock Springer Finance. Springer-Verlag, Berlin, 2006.

\bibitem{Shreve04}
S.~E. Shreve.
\newblock {\em Stochastic calculus for finance. {II}}.
\newblock Springer Finance. Springer-Verlag, New York, 2004.
\newblock Continuous-time models.

\bibitem{Skeel02}
R.~D. Skeel and J.~A. Izaguirre.
\newblock An impulse integrator for {L}angevin dynamics.
\newblock {\em Molecular Physics}, 100(24):3885--3891, 2002.

\bibitem{Szepessy01}
A.~Szepessy, R.~Tempone, and G.~E. Zouraris.
\newblock Adaptive weak approximation of stochastic differential equations.
\newblock {\em Comm. Pure Appl. Math.}, 54(10):1169--1214, 2001.

\bibitem{Talay02}
D.~Talay.
\newblock Stochastic {H}amiltonian systems: exponential convergence to the
  invariant measure, and discretization by the implicit {E}uler scheme.
\newblock {\em Markov Process. Related Fields}, 8(2):163--198, 2002.
\newblock Inhomogeneous random systems (Cergy-Pontoise, 2001).

\bibitem{Talay90}
D.~Talay and L.~Tubaro.
\newblock Expansion of the global error for numerical schemes solving
  stochastic differential equations.
\newblock {\em Stochastic Analysis and Applications}, 8(4):483--509, 1990.

\bibitem{Teckentrup13}
A.~L. Teckentrup, R.~Scheichl, M.~B. Giles, and E.~Ullmann.
\newblock Further analysis of multilevel {M}onte {C}arlo methods for elliptic
  {PDE}s with random coefficients.
\newblock {\em Numer. Math.}, 125(3):569--600, 2013.

\bibitem{Yan02}
L.~Yan.
\newblock The {E}uler scheme with irregular coefficients.
\newblock {\em The Annals of Probability}, 30(3):1172--1194, 2002.

\end{thebibliography}
\bibliographystyle{plain}

\end{document}